\newtheoremstyle{dotless}{}{}{\itshape}{}{\bfseries}{}{}{}
\theoremstyle{dotless}
\theoremstyle{plain}
\newtheorem{thm}{Theorem}[section]
\newtheorem{prop}[thm]{Proposition}
\newtheorem{cor}[thm]{Corollary}
\theoremstyle{definition}
\newtheorem{defn}[thm]{Definition}
\newtheorem{rem}[thm]{Remark}
\newtheorem{exa}[thm]{Example}
\newcommand{\N} {\mathbb{N}}
\newcommand{\R} {\mathbb{R}}
\newcommand{\C} {\mathbb{C}}
\newcommand{\K} {\mathbb{K}}
\newcommand{\D} {\mathbb{D}}
\newcommand{\F} {\mathcal{F}(\Omega)}
\newcommand{\FE} {\mathcal{F}(\Omega,E)}
\DeclareMathOperator{\id}{id}
\newcommand{\fakephantomsection}{%
  \Hy@GlobalStepCount\Hy@linkcounter%
  \Hy@MakeCurrentHref{\@currenvir.\the\Hy@linkcounter}
  \Hy@raisedlink{\hyper@anchorstart{\@currentHref}\hyper@anchorend}%
}
\begin{document}

\title[On linearisation and uniqueness of preduals]{On linearisation and uniqueness of preduals}
\author[K.~Kruse]{Karsten Kruse\,\orcidlink{0000-0003-1864-4915}}
\address[Karsten Kruse]{University of Twente, Department of Applied Mathematics, P.O. Box 217, 7500 AE Enschede, The Netherlands, and Hamburg University of Technology, Institute of Mathematics, Am Schwarzenberg-Campus~3, 21073 Hamburg, Germany}

\email{k.kruse@utwente.nl}

\subjclass[2020]{Primary 46A20, 46E40 Secondary 46A08, 46B10, 46E10}

\keywords{dual space, predual, linearisation, uniqueness}

\date{\today}
\begin{abstract}
We study strong linearisations and the uniqueness of preduals of locally convex Hausdorff spaces of scalar-valued functions. 
Strong linearisations are special preduals. A locally convex Hausdorff space $\F$ of scalar-valued functions on a non-empty 
set $\Omega$ is said to admit a \emph{strong linearisation} if there are a locally convex Hausdorff space $Y$, 
a map $\delta\colon\Omega\to Y$ and a topological isomorphism $T\colon\F\to Y_{b}'$ such that $T(f)\circ \delta= f$ for all $f\in\F$. We give sufficient conditions that allow us to lift strong linearisations from the scalar-valued 
to the vector-valued case, covering many previous results on linearisations, and 
use them to characterise the bornological spaces $\F$ with (strongly) unique predual in certain classes of locally convex Hausdorff spaces.
\end{abstract}
\maketitle

\section{Introduction}\label{sect:intro}

The purpose of this paper is twofold. First, we present a general mechanism how to transfer a strong linearisation of a locally convex Hausdorff space $\F$ of scalar-valued functions on a non-empty set $\Omega$ to a vector-valued counterpart. 
Second, we characterise those spaces among the spaces $\F$ which have a (strongly) unique predual in certain classes of locally convex Hausdorff spaces.  
Strong linearisations are special preduals. Recall that a locally convex Hausdorff space $X$ is called a \emph{dual space} and 
a tuple $(Y,\varphi)$ a \emph{predual} of $X$ if the tuple consists of a locally convex Hausdorff space $Y$ 
and a topological isomorphism $\varphi\colon X\to Y_{b}'$ where $Y_{b}'$ is the strong dual of $Y$. 
In \cite{kruse2023a} we derived necessary and sufficient conditions for the existence of preduals and strong linearisations 
of bornological spaces such that the predual has certain properties like being complete and barrelled, a DF-space, a Fr\'echet space or completely normable (cf.~\prettyref{prop:predual_complete} and \prettyref{cor:scb_linearisation}). A \emph{strong linearisation} of a locally convex Hausdorff space $\F$ of $\K$-valued functions on a non-empty set $\Omega$ where $\K=\R$ or $\C$ is a triple $(\delta,Y,T)$ of a locally convex Hausdorff space $Y$ over the field 
$\K$, a map $\delta\colon\Omega\to Y$ and a topological isomorphism $T\colon\F\to Y_{b}'$ if $T(f)\circ \delta= f$ for all $f\in\F$ (see \cite[p.~683]{carando2004}, \cite[p.~181, 184]{jaramillo2009} and 
\cite[Proposition 2.6, p.~1595]{kruse2023a}). 

We show that one can lift a strong linearisation $(\delta,Y,T)$ of the scalar-valued case $\F$ to the vector-valued case 
in \prettyref{thm:linearisation_full} and unify preceding results on strong vector-valued linearisations 
from Aron, Dimant, Garc\'{i}a-Lirola and Maestre \cite{aron2024}, Bonet, Doma\'nski and Lindstr\"om \cite{bonet2001}, 
Gupta and Baweja \cite{gupta2016}, Jord\'a \cite{jorda2013}, Laitila and Tylli \cite{laitila2006}, Mujica \cite{mujica1991} 
and Quang \cite{quang2023} where $\F$ is a weighted Banach space of holomorphic 
or harmonic functions, and from Beltr\'an \cite{beltran2012,beltran2014}, Bierstedt, Bonet and Galbis \cite{bierstedt1993}, 
Bonet and Friz \cite{bonet2002} and Galindo, Garc\'ia and Maestre \cite{galindo1992} where $\F$ is a weighted bornological space of 
holomorphic functions, and Grothendieck \cite{grothendieck1966} where $\F$ is the space of continuous bilinear forms on the 
product $\Omega\coloneqq F\times G$ of two locally convex Hausdorff spaces $F$ and $G$. 
Further, our results on strong linearisations augment results on continuous linearisations, where 
$\delta$ is continuous but $T$ need not be continuous, by Carando and Zalduendo \cite{carando2004} and 
Jaramillo, Prieto and Zalduendo \cite{jaramillo2009}. Linearisations are a useful tool since they identify (usually) non-linear functions $f$ with (continuous) linear operators $T(f)$ and thus allow to apply linear functional analysis to non-linear functions. They are often used to transfer results that are known for scalar-valued functions to vector-valued functions, 
see e.g.~\cite{bonet2001,bonet2002,grothendieck1966,jaramillo2009,jorda2013,laitila2006}. 
We give an example of such an application to an extension problem in the case that $\F$ 
is a complete bornological DF-space in \prettyref{thm:extension}. 

However, our main motivation in considering strong linearisations in the present article does not stem from transferring results 
from the scalar-valued to the vector-valued case. We use strong linearisations to study the question whether a predual is \emph{unique} 
up to identification via topological isomorphisms. This question is usually only treated in the case of Banach spaces and even then 
mostly in the isometric setting, i.e.~in the case of Banach spaces $X$ having a predual $(Y,\varphi)$ such that $Y$ is a 
Banach space and $\varphi$ an isometric isomorphism. We refer the reader to the thorough survey of Godefroy \cite{godefroy1989} 
in the isometric setting and to the paper \cite{brown1975} of Brown and Ito in the non-isometric Banach setting. 
Since we are interested in the general setting of locally convex Hausdorff spaces, our isomorphisms $\varphi$ are in general only topological and we also have to choose a class $\mathcal{C}$ of locally convex Hausdorff spaces in which we strive for 
uniqueness of the predual. Such a class $\mathcal{C}$ has to be closed under topological isomorphisms because 
otherwise there is no hope for uniqueness of the predual up to identification via topological isomorphisms. 
Using such a class $\mathcal{C}$, it is possible to introduce two notions of uniqueness of a predual that are already known 
in the Banach setting. First, we say that a dual space $X$ has a \emph{unique $\mathcal{C}$ predual} if for all preduals 
$(Y,\varphi)$ and $(Z,\psi)$ of $X$ such that $Y,Z\in\mathcal{C}$ 
there is a topological isomorphism $\lambda\colon Y\to Z$. Second, we say that a dual space $X$ has a 
\emph{strongly unique $\mathcal{C}$ predual} if for all preduals $(Y,\varphi)$ and $(Z,\psi)$ such that $Y,Z\in\mathcal{C}$ 
and all topological isomorphisms $\alpha\colon Z_{b}'\to Y_{b}'$ there is a topological isomorphism $\lambda\colon Y\to Z$ 
such that $\lambda^{t}=\alpha$. The second definition might seem a bit strange at first but we will see that 
it takes the topological isomorphisms of a predual into account, so it does not forget the additional structure, 
and it fits quite well to strong linearisations. 
Strong uniqueness of a predual in $\mathcal{C}$ means that all preduals of $X$ are \emph{equivalent} 
in the sense that for all preduals $(Y,\varphi)$ and $(Z,\psi)$ of $X$ such that $Y,Z\in\mathcal{C}$ 
there is a topological isomorphism $\lambda\colon Y\to Z$ such that $\lambda^{t}=\varphi\circ\psi^{-1}$, 
see \prettyref{prop:strongly_unique_equivalent}. 
We note that this definition of the equivalence of preduals is an adaptation of a corresponding definition in the Banach setting 
by Gardella and Thiel \cite{gardella2020}. The notion of equivalence of preduals allows us on the one hand to show 
in \prettyref{cor:carando_zalduendo_predual} how our construction of a strong linearisation is 
related to the continuous linearisation of Carando and Zalduendo, and on the other hand to characterise the bornological spaces 
$\F$ of $\K$-valued functions which have a (strongly) unique $\mathcal{C}$ predual, 
see \prettyref{cor:strongly_unique_C_predual_without_lin_ind} and \prettyref{cor:unique_C_predual_without_lin_ind}. 
We refer the reader who is also interested in the corresponding results of the present paper in the isometric Banach setting 
to \cite{kruse2023c}.

\section{Notions and preliminaries}
\label{sect:notions}

In this short section we recall some basic notions from the theory of locally convex spaces and present some prelimary results 
on dual spaces and their preduals (cf.~\cite[Section 2]{kruse2023a}). 
For a locally convex Hausdorff space $X$ over the field $\K\coloneqq\R$ or $\C$ we denote by $X'$ the topological linear dual space 
and by $U^{\circ}$ the \emph{polar set} of a subset $U\subset X$. 
If we want to emphasize the dependency on the locally convex Hausdorff topology $\tau$ of $X$, we write $(X,\tau)$ and 
$(X,\tau)'$ instead of just $X$ and $X'$, respectively. We denote by $\sigma(X',X)$ the topology on $X'$ of uniform convergence 
on finite subsets of $X$ and by $\beta(X',X)$ the topology on $X'$ of uniform convergence on bounded subsets of $X$. 
Further, we set $X_{b}'\coloneqq (X',\beta(X',X))$. 
For a continuous linear map $T\colon X\to Y$ between two locally convex Hausdorff spaces $X$ and $Y$ we denote by 
$T^{t}\colon Y'\to X'$, $y'\mapsto y'\circ T$, the \emph{dual map} of $T$ and write $T^{tt}\coloneqq (T^{t})^{t}$. 
Furthermore, we say that a linear map $T\colon X\to Y$ between two locally convex Hausdorff spaces $X$ and $Y$ is \emph{(locally) 
bounded} if it maps bounded sets to bounded sets. 
Moreover, for two locally convex Hausdorff topologies $\tau_{0}$ and $\tau_{1}$ 
on $X$ we write $\tau_{0}\leq\tau_{1}$ if $\tau_{0}$ is coarser than $\tau_{1}$. For a normed space $(X,\|\cdot\|)$ we denote by 
$B_{\|\cdot\|}\coloneqq\{x\in X\;|\;\|x\|\leq 1\}$ the $\|\cdot\|$-closed unit ball of $X$. Further, we write 
$\tau_{\operatorname{co}}$ for the \emph{compact-open topology}, i.e.~the topology of uniform convergence on compact subsets of 
$\Omega$, on the space $\mathcal{C}(\Omega)$ of $\K$-valued continuous functions on a topological Hausdorff space $\Omega$. 
In addition, we write $\tau_{\operatorname{p}}$ for the \emph{topology of pointwise convergence} on the space $\K^{\Omega}$ of $\K$-valued functions on a set $\Omega$. By a slight abuse of notation we also use the symbols $\tau_{\operatorname{co}}$ and 
$\tau_{\operatorname{p}}$ for the relative compact-open topology and the relative topology of pointwise convergence on topological subspaces of $\mathcal{C}(\Omega)$ and $\K^{\Omega}$, respectively. 
For further unexplained notions on the theory of locally convex Hausdorff spaces we refer the reader to \cite{jarchow1981,meisevogt1997,bonet1987}.

\begin{defn}\label{defn:predual_equivalent}
Let $X$ be a locally convex Hausdorff space. 
\begin{enumerate}
\item[(a)] We call $X$ a \emph{dual space} if there are a locally convex Hausdorff space $Y$ and a topological isomorphism $\varphi\colon X\to Y_{b}'$. The tuple $(Y,\varphi)$ is called a \emph{predual} of $X$. 
\item[(b)] Let $X$ be a dual space. We say that two preduals $(Y,\varphi)$ and $(Z,\psi)$ of $X$ are \emph{equivalent} and write $(Z,\psi)\sim (Y,\varphi)$ if there is a topological isomorphism 
$\lambda\colon Y\to Z$ such that $\lambda^{t}=\varphi\circ\psi^{-1}$. 
\end{enumerate} 
\end{defn}

The preceding definition of a predual is already given in \cite[Definition 2.1, p.~1593]{kruse2023a} 
and for Banach spaces e.g.~in \cite[p.~321]{brown1975}. In the setting of Banach spaces the definition 
of the equivalence of preduals is given in \cite[Definition 2.1]{gardella2020}. It is also easily checked that 
$\sim$ actually defines an equivalence relation on the family of all preduals of a dual space. 
Further, if we have a predual $(Y,\varphi)$ of a dual space $X$ and another locally convex Hausdorff space $Z$ 
which is topologically isomorphic to $Y$, we can always augment $Z$ to a predual which is equivalent to 
$(Y,\varphi)$. 

\begin{rem}\label{rem:augment_isom_to_equiv_predual}
Let $X$ be a dual space with predual $(Y,\varphi)$ and $Z$ a locally convex Hausdorff space. 
If there is a topological isomorphism $\lambda\colon Y\to Z$, then 
the map $\varphi_{\lambda}\colon X\to Z_{b}'$, $\varphi_{\lambda}\coloneqq 
(\lambda^{-1})^{t}\circ\varphi$, is a topological isomorphism with 
$\varphi_{\lambda}^{-1}(z')=\varphi^{-1}(z'\circ\lambda)$ for all $z'\in Z'$ and $\lambda^{t}=\varphi\circ\varphi_{\lambda}^{-1}$. In particular, 
$(Z,\varphi_{\lambda})$ is a predual of $X$ and $(Z,\varphi_{\lambda})\sim (Y,\varphi)$.
\end{rem}

If $X$ is a dual space with a quasi-barrelled predual, we may consider this predual as a 
topological subspace of the strong dual of $X$.

\begin{prop}[{\cite[Proposition 2.2, p.~1593]{kruse2023a}}]\label{prop:predual_into_dual}
Let $X$ be a dual space with quasi-barrelled predual $(Y,\varphi)$.
Then the map 
\[
\Phi_{\varphi}\colon Y\to X_{b}',\;y\longmapsto[x \mapsto \varphi(x)(y)],
\]
is a topological isomorphism into, i.e.~a topological isomorphism to its range. 
\end{prop}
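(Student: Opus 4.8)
The plan is to verify the three defining properties of a topological isomorphism into: well-definedness together with injectivity, continuity, and continuity of the inverse on the range. First I would check that $\Phi_{\varphi}$ lands in $X'$ and is injective. For fixed $y\in Y$ the evaluation $\operatorname{ev}_{y}\colon Y_{b}'\to\K$, $u\mapsto u(y)$, is $\sigma(Y',Y)$-continuous, hence $\beta(Y',Y)$-continuous, so $\Phi_{\varphi}(y)=\operatorname{ev}_{y}\circ\varphi=\varphi^{t}(\operatorname{ev}_{y})$ is a continuous linear functional on $X$; linearity of $\Phi_{\varphi}$ in $y$ is clear. Conceptually this already identifies $\Phi_{\varphi}=\varphi^{t}\circ J$, where $J\colon Y\to(Y_{b}')_{b}'$ is the canonical embedding of $Y$ into its strong bidual, so the whole statement is essentially the classical fact that a quasi-barrelled space embeds topologically into its strong bidual, transported along the topological isomorphism $\varphi$. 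Injectivity follows since $\Phi_{\varphi}(y)=0$ forces $u(y)=0$ for all $u\in Y'$ (using that $\varphi$ is onto $Y_{b}'$), whence $y=0$ because $Y$ is Hausdorff and $Y'$ separates points.

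Next I would establish continuity of $\Phi_{\varphi}\colon Y\to X_{b}'$ by testing against the seminorms $p_{B}(x')\coloneqq\sup_{x\in B}|x'(x)|$, for $B\subset X$ bounded, which generate $\beta(X',X)$. Since $\varphi$ is a topological isomorphism it carries the bounded set $B$ to a bounded subset $\varphi(B)$ of $Y_{b}'$, i.e.\ to a strongly bounded subset of $Y'$, and one computes
\[
p_{B}(\Phi_{\varphi}(y))=\sup_{x\in B}|\varphi(x)(y)|=\sup_{u\in\varphi(B)}|u(y)|.
\]
The main obstacle is precisely to turn this supremum over a strongly bounded set into a continuous seminorm on $Y$, and this is exactly where the hypothesis is used: because $Y$ is quasi-barrelled, every strongly bounded subset of $Y'$ is equicontinuous, so $y\mapsto\sup_{u\in\varphi(B)}|u(y)|$ is a continuous seminorm on $Y$. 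This yields the continuity of $\Phi_{\varphi}$.

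Finally I would check that $\Phi_{\varphi}^{-1}$ is continuous on the range, which needs no barrelledness. The topology of $Y$ is the topology of uniform convergence on the equicontinuous subsets of $Y'$, so its continuous seminorms are the $q_{A}(y)\coloneqq\sup_{u\in A}|u(y)|$ with $A\subset Y'$ equicontinuous. Every equicontinuous $A$ is bounded in $Y_{b}'$, hence $\varphi^{-1}(A)$ is bounded in $X$, and $q_{A}(y)=\sup_{x\in\varphi^{-1}(A)}|\varphi(x)(y)|=p_{\varphi^{-1}(A)}(\Phi_{\varphi}(y))$. Thus the original topology of $Y$ coincides with the initial topology induced by $\Phi_{\varphi}$ from $X_{b}'$, so $\Phi_{\varphi}$ is a homeomorphism onto its range. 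Combining the three steps gives that $\Phi_{\varphi}$ is a topological isomorphism into, with quasi-barrelledness entering only in the continuity step.
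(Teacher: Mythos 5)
Your proof is correct and complete: the three steps (well-definedness via $\Phi_{\varphi}=\varphi^{t}\circ J$, continuity from quasi-barrelledness turning the strongly bounded set $\varphi(B)$ into an equicontinuous one, and openness onto the range from the fact that the topology of $Y$ is that of uniform convergence on equicontinuous subsets of $Y'$) are exactly the standard argument. The paper itself gives no proof, merely citing \cite[Proposition 2.2]{kruse2023a}, and your conceptual remark that the statement is the classical embedding $\mathcal{J}_{Y}\colon Y\to(Y_{b}')_{b}'$ of a quasi-barrelled space into its strong bidual (cf.\ \cite[11.2.2 Proposition, p.~222]{jarchow1981}, which the paper invokes elsewhere for precisely this purpose) transported along $\varphi$ is the intended reading.
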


Now, if we want to study whether a dual space has a \emph{unique} predual by identification via 
topological isomorphisms, we have to restrict the range of preduals we consider 
because even a Banach space may have a predual which
is also a Banach space, and another predual which is not a Banach space 
(see \cite[Example 3.15, p.~1601]{kruse2023a}). Two such preduals cannot be topologically isomorphic. 

\begin{defn}\label{defn:unique_predual}
Let $X$ be a dual space and $\mathcal{C}$ a class of locally convex Hausdorff spaces that 
is \emph{closed under topological isomorphisms}, i.e.~if $Y\in\mathcal{C}$ and $Z$ is a locally convex Hausdorff space which is 
topologically isomorphic to $Y$, then $Z\in\mathcal{C}$. 
\begin{enumerate}
\item[(a)] We say that $X$ has a \emph{unique $\mathcal{C}$ predual} if for all preduals $(Y,\varphi)$ and $(Z,\psi)$ of $X$ such that $Y,Z\in\mathcal{C}$ 
there is a topological isomorphism $\lambda\colon Y\to Z$. 
\item[(b)] We say that $X$ has a \emph{strongly unique $\mathcal{C}$ predual} if for all preduals 
$(Y,\varphi)$ and $(Z,\psi)$ such that $Y,Z\in\mathcal{C}$ 
and all topological isomorphisms $\alpha\colon Z_{b}'\to Y_{b}'$ 
there is a topological isomorphism $\lambda\colon Y\to Z$ such that $\lambda^{t}=\alpha$. 
\end{enumerate} 
\end{defn}

In the context of dual Banach spaces where $\mathcal{C}$ is the class of Banach spaces \prettyref{defn:unique_predual} (a) is already given in e.g.~\cite[p.~321]{brown1975} (to be more precise, in the setting of dual Banach spaces, $\mathcal{C}$ in \prettyref{defn:unique_predual} is not the class of Banach spaces but of completely normable spaces since $Z$ in the definition of closedness under 
topological isomorphisms is a locally convex Hausdorff space which need not be a normed space initially). 
\prettyref{defn:unique_predual} (b) is inspired by a similar definition of 
a strongly unique \emph{isometric} Banach predual of a Banach space in e.g.~\cite[p.~134]{godefroy1989} 
and \cite[p.~469]{weaver2018a}. We frequently consider four classes $\mathcal{C}$ of preduals 
in the present paper: the class of complete barrelled locally convex Hausdorff spaces, the class of complete barrelled DF-spaces, the class of Fr\'echet spaces and the class of completely normable spaces. 

\begin{rem}\label{rem:strongly_unique}
Let $X$ be a dual space, $\mathcal{C}_{1}$ and $\mathcal{C}_{2}$ be classes 
of locally convex Hausdorff spaces that are closed under topological isomorphisms and 
$\mathcal{C}_{1}$ be contained in $\mathcal{C}_{2}$. If $X$ has a (strongly) 
unique $\mathcal{C}_{2}$ predual and there is a predual $(Y,\varphi)$ of $X$ such 
that $Y\in\mathcal{C}_{1}$, then $X$ has also a (strongly) unique $\mathcal{C}_{1}$ predual.
\end{rem}

\begin{prop}\label{prop:unique_equivalent}
Let $X$ be a dual space and $\mathcal{C}$ a class of locally convex Hausdorff spaces that 
is closed under topological isomorphisms. Then the following assertions are equivalent.
\begin{enumerate}
\item[(a)] $X$ has a unique $\mathcal{C}$ predual. 
\item[(b)] For all preduals $(Y,\varphi)$ and $(Z,\psi)$ of $X$ such that $Y,Z\in\mathcal{C}$, there 
is a topological isomorphism $\mu\colon X\to Z_{b}'$ such that $(Z,\mu)\sim(Y,\varphi)$. 
\end{enumerate}
\end{prop}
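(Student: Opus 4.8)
The plan is to prove both implications directly from \prettyref{defn:predual_equivalent} (b), with essentially all of the substantive work for the forward direction already encapsulated in \prettyref{rem:augment_isom_to_equiv_predual}. So the proof will be short, and the only thing requiring care is tracking the directions of the isomorphisms and their dual maps.

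For the implication (b) $\Rightarrow$ (a), I would just unravel the equivalence relation. Given preduals $(Y,\varphi)$ and $(Z,\psi)$ of $X$ with $Y,Z\in\mathcal{C}$, assertion (b) supplies a topological isomorphism $\mu\colon X\to Z_{b}'$ with $(Z,\mu)\sim(Y,\varphi)$. By \prettyref{defn:predual_equivalent} (b) this equivalence means that there is a topological isomorphism $\lambda\colon Y\to Z$ (satisfying $\lambda^{t}=\varphi\circ\mu^{-1}$, although only the existence of such a $\lambda$ is needed). This is precisely the conclusion demanded in (a), so this direction is immediate.

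For the implication (a) $\Rightarrow$ (b), the idea is to upgrade the bare topological isomorphism produced by (a) to an equivalence of preduals using the augmentation procedure of \prettyref{rem:augment_isom_to_equiv_predual}. Let $(Y,\varphi)$ and $(Z,\psi)$ be preduals with $Y,Z\in\mathcal{C}$. Assertion (a) yields a topological isomorphism $\lambda\colon Y\to Z$. I would then apply \prettyref{rem:augment_isom_to_equiv_predual} to the predual $(Y,\varphi)$, the space $Z$ and this $\lambda$: the map $\mu\coloneqq(\lambda^{-1})^{t}\circ\varphi\colon X\to Z_{b}'$ is a topological isomorphism, $(Z,\mu)$ is a predual of $X$, and $(Z,\mu)\sim(Y,\varphi)$. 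This is exactly the statement of (b). Note that $Z$ itself is reused here, so it automatically lies in $\mathcal{C}$ and no separate appeal to closedness of $\mathcal{C}$ under topological isomorphisms is required.

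In summary there is no genuine obstacle: the content is concentrated in \prettyref{rem:augment_isom_to_equiv_predual}, and the proposition is a matter of matching that remark against \prettyref{defn:predual_equivalent} (b). The only point deserving attention is verifying that the types compose correctly, namely that $(\lambda^{-1})^{t}\colon Y_{b}'\to Z_{b}'$ so that $\mu=(\lambda^{-1})^{t}\circ\varphi$ indeed maps $X$ into $Z_{b}'$, and that the resulting identity $\lambda^{t}=\varphi\circ\mu^{-1}$ is the one required by the definition of $\sim$.
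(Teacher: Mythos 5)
Your proof is correct and follows essentially the same route as the paper: (a)$\Rightarrow$(b) via \prettyref{rem:augment_isom_to_equiv_predual} with $\mu=\varphi_{\lambda}=(\lambda^{-1})^{t}\circ\varphi$, and (b)$\Rightarrow$(a) by unravelling \prettyref{defn:predual_equivalent}~(b). No issues.
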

\begin{proof}
Let $(Y,\varphi)$ and $(Z,\psi)$ be preduals of $X$ such that $Y,Z\in\mathcal{C}$.

(a)$\Rightarrow$(b) Since $X$ has a unique predual, there is a topological isomorphism $\lambda\colon Y\to Z$. Now, statement (b) follows from \prettyref{rem:augment_isom_to_equiv_predual} with 
$\mu\coloneqq\varphi_{\lambda}$.

(b)$\Rightarrow$(a) Let $\mu\colon X\to Z_{b}'$ be a topological isomorphism such that 
$(Z,\mu)\sim(Y,\varphi)$. Then there is a topological isomorphism $\lambda\colon Y\to Z$ such that 
$\lambda^{t}=\varphi\circ\mu^{-1}$. Thus $X$ has a unique $\mathcal{C}$ predual.
\end{proof}

\begin{prop}\label{prop:strongly_unique_equivalent}
Let $X$ be a dual space and $\mathcal{C}$ a class of locally convex Hausdorff spaces that 
is closed under topological isomorphisms. Then the following assertions are equivalent.
\begin{enumerate}
\item[(a)] $X$ has a strongly unique $\mathcal{C}$ predual. 
\item[(b)] All preduals of $X$ in $\mathcal{C}$ are equivalent 
(in the sense of \prettyref{defn:predual_equivalent} (b)).
\end{enumerate}
\end{prop}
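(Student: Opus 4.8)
The plan is to prove the equivalence by directly unwinding both definitions, the key observation being that the content of strong uniqueness coincides with that of equivalence once one is free to re-label the isomorphism $X\to Z_{b}'$ attached to a predual. Throughout I fix two preduals $(Y,\varphi)$ and $(Z,\psi)$ of $X$ with $Y,Z\in\mathcal{C}$.

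For the implication (a)$\Rightarrow$(b) I would feed the \emph{canonical} isomorphism into the definition of strong uniqueness. Namely, since $\psi^{-1}\colon Z_{b}'\to X$ and $\varphi\colon X\to Y_{b}'$ are topological isomorphisms, their composite $\alpha\coloneqq\varphi\circ\psi^{-1}\colon Z_{b}'\to Y_{b}'$ is a topological isomorphism. Applying strong uniqueness to this particular $\alpha$ produces a topological isomorphism $\lambda\colon Y\to Z$ with $\lambda^{t}=\alpha=\varphi\circ\psi^{-1}$, which is exactly the statement that $(Z,\psi)\sim(Y,\varphi)$ in the sense of \prettyref{defn:predual_equivalent} (b). As $(Y,\varphi)$ and $(Z,\psi)$ were arbitrary preduals in $\mathcal{C}$, all such preduals are equivalent.

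For the converse (b)$\Rightarrow$(a) the one genuine idea enters: an arbitrary topological isomorphism $\alpha\colon Z_{b}'\to Y_{b}'$ need not be the canonical map $\varphi\circ\psi^{-1}$, so I would absorb $\alpha$ into the predual by replacing $\psi$. Concretely, set $\tilde{\psi}\coloneqq\alpha^{-1}\circ\varphi\colon X\to Z_{b}'$, which is again a topological isomorphism; hence $(Z,\tilde{\psi})$ is a predual of $X$ built on the same underlying space $Z\in\mathcal{C}$ (this is the re-labelling, in the spirit of \prettyref{rem:augment_isom_to_equiv_predual}). Applying hypothesis (b) to $(Y,\varphi)$ and $(Z,\tilde{\psi})$ yields a topological isomorphism $\lambda\colon Y\to Z$ with $\lambda^{t}=\varphi\circ\tilde{\psi}^{-1}$. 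Since $\tilde{\psi}^{-1}=\varphi^{-1}\circ\alpha$, the right-hand side collapses to $\varphi\circ\varphi^{-1}\circ\alpha=\alpha$, so $\lambda^{t}=\alpha$, which is precisely strong uniqueness.

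Everything here is a formal manipulation of compositions of topological isomorphisms, so I do not anticipate a substantive obstacle; the only point requiring care is the re-labelling step in (b)$\Rightarrow$(a), where one must verify that the modified tuple $(Z,\tilde{\psi})$ is still a legitimate predual in $\mathcal{C}$ and keep the domains and codomains of $\varphi,\psi,\alpha,\lambda$ and their transposes consistent (recall $\lambda\colon Y\to Z$ gives $\lambda^{t}\colon Z'\to Y'$, matching $\alpha\colon Z_{b}'\to Y_{b}'$).
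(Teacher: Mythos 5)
Your proposal is correct and follows essentially the same route as the paper: the forward direction is identical, and in the converse you absorb the arbitrary isomorphism $\alpha$ into the $Z$-predual via $\tilde{\psi}\coloneqq\alpha^{-1}\circ\varphi$, whereas the paper absorbs it into the $Y$-predual via $(Y,\alpha\circ\psi)$ --- a mirror-image of the same re-labelling trick. Both computations of $\lambda^{t}=\alpha$ check out against \prettyref{defn:predual_equivalent} (b).
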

\begin{proof}
Let $(Y,\varphi)$ and $(Z,\psi)$ be preduals of $X$ such that $Y,Z\in\mathcal{C}$.

(a)$\Rightarrow$(b) The map $\alpha\colon Z_{b}'\to Y_{b}'$, $\alpha\coloneqq \varphi\circ\psi^{-1}$, 
is a topological isomorphism. Since $X$ has a strongly unique predual, there is a topological isomorphism $\lambda\colon Y\to Z$ 
such that $\lambda^{t}=\alpha=\varphi\circ\psi^{-1}$. Thus we have $(Z,\psi)\sim (Y,\varphi)$.

(b)$\Rightarrow$(a) Let $\alpha\colon Z_{b}'\to Y_{b}'$ be a topological isomorphism. 
The tuple $(Y,\alpha\circ\psi)$ is also a predual of $X$. Since all preduals are equivalent, there is a topological isomorphism 
$\lambda\colon Y\to Z$ such that $\lambda^{t}=(\alpha\circ\psi)\circ\psi^{-1}=\alpha$. Thus $X$ has a strongly unique 
$\mathcal{C}$ predual.
\end{proof}

In the context of dual Banach spaces \prettyref{prop:strongly_unique_equivalent} (b) is used 
in \cite[Definition 2.7]{gardella2020} to give an equivalent definition of a dual Banach space 
having a strongly unique Banach predual. Clearly, reflexive locally convex Hausdorff spaces are dual 
spaces. The bornological ones among them also have a strongly unique complete barrelled predual.

\begin{exa}
Let $X$ be a reflexive bornological locally convex Hausdorff space. Then $X$ has a strongly 
unique complete barrelled predual. In particular, every complete barrelled predual of $X$ is reflexive.
\end{exa}
\begin{proof}
Since $X$ is reflexive and bornological, the strong dual $X_{b}'$ is a reflexive, hence barrelled, 
predual of $X$ (equipped with the canonical evaluation map 
$\mathcal{J}_{X}\colon X\to (X_{b}')_{b}'$, $x\longmapsto [x'\mapsto x'(x)]$) 
and also complete by \cite[\S 39, 6.(4), p.~143]{koethe1979}. 
Let $(Y,\varphi)$ be another complete barrelled predual of the reflexive space $X$. 
Then $Y_{b}'$ is reflexive and thus $(Y_{b}')_{b}'$ as well. Since $Y$ is complete and barrelled, it is a closed subspace 
of $(Y_{b}')_{b}'$ via the map $\mathcal{J}_{Y}$ by \cite[11.2.2 Proposition, p.~222]{jarchow1981}, implying that $Y$ is reflexive 
by \cite[11.5.5 Proposition (a), p.~228]{jarchow1981}. Therefore the map $\lambda\colon X_{b}'\to Y$, 
$\lambda\coloneqq \mathcal{J}_{Y}^{-1}\circ(\mathcal{J}_{X}\circ\varphi^{-1})^{t}\circ\mathcal{J}_{X_{b}'}$, is a topological 
isomorphism and 
\begin{align*}
  \lambda^{t}
&=\mathcal{J}_{X_{b}'}^{t}\circ(\mathcal{J}_{X}\circ\varphi^{-1})^{tt}\circ(\mathcal{J}_{Y}^{-1})^{t}
 =\mathcal{J}_{(X_{b}')_{b}'}^{-1}\circ(\mathcal{J}_{X}\circ\varphi^{-1})^{tt}\circ(\mathcal{J}_{Y}^{t})^{-1}\\
&=\mathcal{J}_{(X_{b}')_{b}'}^{-1}\circ(\mathcal{J}_{X}\circ\varphi^{-1})^{tt}\circ\mathcal{J}_{Y_{b}'}
 =\mathcal{J}_{(X_{b}')_{b}'}^{-1}\circ\mathcal{J}_{(X_{b}')_{b}'}\circ(\mathcal{J}_{X}\circ\varphi^{-1})
 =\mathcal{J}_{X}\circ\varphi^{-1}.
\end{align*}
We conclude that $(Y,\varphi)\sim( X_{b}',\mathcal{J}_{X})$. Due to \prettyref{prop:strongly_unique_equivalent} this means that 
$X$ has a strongly unique complete barrelled predual. 
\end{proof}

If $X$ is a reflexive Fr\'echet space, then $X_{b}'$ is a complete reflexive DF-space by \cite[12.4.5 Theorem, p.~260]{jarchow1981} 
and so $X$ has a strongly unique complete barrelled DF-predual by \prettyref{rem:strongly_unique}. 
Similarly, if $X$ is a reflexive bornological DF-space, then $X_{b}'$ is a complete reflexive Fr\'echet space 
by \cite[12.4.2 Theorem, p.~258]{jarchow1981} and so $X$ has a strongly unique Fr\'echet predual. 
If $X$ is a reflexive Banach space, then $X_{b}'$ is a completely normable reflexive space and so 
$X$ has a strongly unique Banach predual. Let us turn to linearisations of function spaces whose definition is motivated by 
the notion of strong Banach linearisations \cite[p.~184, 187]{jaramillo2009}.

\section{Linearisation and uniqueness}
\label{sect:linearisation}

We begin this section with the definition of a linearisation. 

\begin{defn}[{\cite[Definition 2.3, p.~1593]{kruse2023a}}]\label{defn:linearisation}
Let $\F$ be a linear space of $\K$-valued functions on a non-empty set $\Omega$. 
\begin{enumerate}
\item[(a)] We call a triple $(\delta,Y,T)$ of a locally convex Hausdorff space $Y$ over the field $\K$, 
a map $\delta\colon\Omega\to Y$ and an algebraic isomorphism $T\colon\F\to Y'$ a \emph{linearisation of} $\F$ 
if $T(f)\circ \delta= f$ for all $f\in\F$. 
\item[(b)] Let $\Omega$ be a topological Hausdorff space. We call a linearisation $(\delta,Y,T)$ of $\F$ \emph{continuous} 
if $\delta$ is continuous.
\item[(c)] Let $\F$ be a locally convex Hausdorff space. We call a linearisation $(\delta,Y,T)$ of $\F$ \emph{strong} if $T\colon\F\to Y_{b}'$ is a topological isomorphism. 
\item[(d)] We call a (strong) linearisation $(\delta,Y,T)$ of $\F$ a (strong) \emph{complete barrelled (Fr\'echet, DF-, Banach) linearisation} if $Y$ is a complete barrelled (Fr\'echet, DF-, completely normable) space.
\item[(e)] We say that $\F$ \emph{admits a (continuous, strong, complete barrelled, Fr\'echet, DF-, Banach) linearisation} 
if there exists a (continuous, strong, complete barrelled, Fr\'echet, DF-, Banach) linearisation $(\delta,Y,T)$ 
of $\F$.
\end{enumerate}
\end{defn}

Clearly, the tuple $(Y,T)$ of a linearisation $(\delta,Y,T)$ is a predual of $\F$. 
If we have two strong linearisations of a common function space $\F$ such that the corresponding preduals are equivalent, 
then we have the following relation between the $\delta$-maps.

\begin{prop}\label{prop:lin_relation_deltas}
Let $\F$ be a locally convex Hausdorff space of $\K$-valued functions on a non-empty set $\Omega$ and 
$(\delta,Y,T)$ and $(\widetilde{\delta},Z,\varphi)$ strong linearisations of $\F$. 
If there is a topological isomorphism $\lambda\colon Y\to Z$ such that $\lambda^{t}=T\circ\varphi^{-1}$, then it holds 
$\lambda(\delta(x))=\widetilde{\delta}(x)$ for all $x\in\Omega$. 
\end{prop}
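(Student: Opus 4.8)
The plan is to prove the equality of the two elements $\lambda(\delta(x))$ and $\widetilde{\delta}(x)$ of $Z$ by testing them against continuous linear functionals. Since $Z$ is a locally convex Hausdorff space, its dual $Z'$ separates the points of $Z$, so it suffices to show that $z'(\lambda(\delta(x)))=z'(\widetilde{\delta}(x))$ for every $x\in\Omega$ and every $z'\in Z'$.

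First I would fix $x\in\Omega$ and $z'\in Z'$ and set $f\coloneqq\varphi^{-1}(z')\in\F$, so that $z'=\varphi(f)$. For the left-hand side I would unravel the definition of the dual map together with the hypothesis $\lambda^{t}=T\circ\varphi^{-1}$, reading this identity as an equality of maps $Z'\to Y'$:
\[
z'(\lambda(\delta(x)))=(z'\circ\lambda)(\delta(x))=\lambda^{t}(z')(\delta(x))=(T\circ\varphi^{-1})(z')(\delta(x))=T(f)(\delta(x)).
\]
The linearisation property of $(\delta,Y,T)$, namely $T(f)\circ\delta=f$ from \prettyref{defn:linearisation}, then yields $T(f)(\delta(x))=f(x)$.

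For the right-hand side I would instead use $z'=\varphi(f)$ together with the linearisation property of $(\widetilde{\delta},Z,\varphi)$, namely $\varphi(f)\circ\widetilde{\delta}=f$:
\[
z'(\widetilde{\delta}(x))=\varphi(f)(\widetilde{\delta}(x))=(\varphi(f)\circ\widetilde{\delta})(x)=f(x).
\]
Both sides therefore equal $f(x)=\varphi^{-1}(z')(x)$, which establishes the claimed identity for every $z'\in Z'$ and hence $\lambda(\delta(x))=\widetilde{\delta}(x)$ by separation of points.

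I do not expect a serious obstacle here: the argument is essentially bookkeeping with the definition of the dual map and the two linearisation identities. The only point requiring care is the correct interpretation of $\lambda^{t}=T\circ\varphi^{-1}$ as an equality of maps $Z'\to Y'$, so that evaluating first at $z'$ and then at $\delta(x)$ is legitimate; this is where I would be most careful to keep the order of the compositions and evaluations straight.
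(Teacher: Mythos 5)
Your proof is correct and follows essentially the same route as the paper's: both evaluate $z'(\lambda(\delta(x)))$ via $\lambda^{t}=T\circ\varphi^{-1}$ and the two linearisation identities to obtain $\varphi^{-1}(z')(x)=z'(\widetilde{\delta}(x))$, and then conclude by the Hahn--Banach separation of points in the Hausdorff space $Z$. No issues.
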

\begin{proof}
We note that 
\begin{align*}
 z'(\lambda(\delta(x)))
&=\lambda^{t}(z')(\delta(x))
 =(T\circ\varphi^{-1})(z')(\delta(x))
 =T(\varphi^{-1}(z'))(\delta(x))\\
&=\varphi^{-1}(z')(x)
 =\varphi(\varphi^{-1}(z'))(\widetilde{\delta}(x))
 =z'(\widetilde{\delta}(x))
\end{align*}
for all $z'\in Z'$ and $x\in\Omega$. Since $Z$ is Hausdorff, our statement follows from the Hahn--Banach theorem. 
\end{proof}

Further, we observe that the map $\delta$ which makes a predual a strong linearisation is unique. 

\begin{rem}\label{rem:lin_unique_delta}
If $\F$ is a locally convex Hausdorff space of $\K$-valued functions on a non-empty set $\Omega$ and 
$(\delta,Y,T)$ and $(\widetilde{\delta},Y,T)$ are strong linearisations of $\F$, then it holds $\delta=\widetilde{\delta}$. Indeed, this follows from \prettyref{prop:lin_relation_deltas} with 
$Z\coloneqq Y$, $\varphi\coloneqq T$ and $\lambda\coloneqq\id$. 
\end{rem}

Now, we recall two conditions, named $\operatorname{(BBC)}$ and $\operatorname{(CNC)}$, that we need to guarantee the existence of a strong complete barrelled linearisation of a bornological function space (see \cite[p.~114]{bierstedt1992}, 
\cite[2.~Corollary, p.~115]{bierstedt1992} and \cite[Theorem 1, p.~320--321]{mujica1984a}). 
 
\begin{defn}[{\cite[Definition 3.1, p.~1596--1597]{kruse2023a}}]
Let $(X,\tau)$ be a locally convex Hausdorff space. 
\begin{enumerate}
\item[(a)] We say that $(X,\tau)$ satisfies condition $\operatorname{(BBC)}$ 
if there exists a locally convex Hausdorff topology $\widetilde{\tau}$ on $X$ such that every $\tau$-bounded subset of $X$ 
is contained in an absolutely convex $\tau$-bounded $\widetilde{\tau}$-compact set. 
\item[(b)] We say that $(X,\tau)$ satisfies condition $\operatorname{(CNC)}$ if there exists a locally convex Hausdorff topology 
$\widetilde{\tau}$ on $X$ such that $\tau$ has a $0$-neighbourhood basis $\mathcal{U}_{0}$ 
of absolutely convex $\widetilde{\tau}$-closed sets. 
\end{enumerate}
If we want to emphasize the dependency on $\widetilde{\tau}$ we say that $(X,\tau)$ satifies $\operatorname{(BBC)}$ 
resp.~$\operatorname{(CNC)}$ for $\widetilde{\tau}$. We say $(X,\tau)$ satisfies $\operatorname{(BBC)}$ and 
$\operatorname{(CNC)}$ for $\widetilde{\tau}$ if it satisfies both conditions for the same $\widetilde{\tau}$.
\end{defn}

In order to obtain a candidate for the predual $(Y,T)$ of a given function space $\F$, we recall the following space of linear functionals from \cite{bierstedt1992,kruse2023a,mujica1984a,ng1971}. 
Let $(X,\tau)$ be a locally convex Hausdorff space, $\mathcal{B}$ the family of $\tau$-bounded sets and $\widetilde{\tau}$ 
another locally convex Hausdorff topology on $X$. We denote by $X^{\star}$ the algebraic dual space of $X$ and define 
\[
 X_{\mathcal{B},\widetilde{\tau}}'
\coloneqq\{x^{\star}\in X^{\star}\;|\;x^{\star}_{\mid B}\text{ is }\widetilde{\tau}\text{-continuous for all }B\in\mathcal{B}\}
\]
and observe that $(X,\widetilde{\tau})'\subset X_{\mathcal{B},\widetilde{\tau}}'$ as linear spaces. 
We equip $X_{\mathcal{B},\widetilde{\tau}}'$ with the topology 
$\beta\coloneqq\beta(X_{\mathcal{B},\widetilde{\tau}}',(X,\tau))$ 
of uniform convergence on the $\tau$-bounded subsets of $X$. 

\begin{prop}[{\cite[Proposition 3.7, p.~1599]{kruse2023a}}]\label{prop:predual_complete}
Let $(X,\tau)$ be a bornological locally convex Hausdorff space satisfying $\operatorname{(BBC)}$ for some $\widetilde{\tau}$ 
and $\mathcal{B}$ the family of $\tau$-bounded sets. Then the following assertions hold.
\begin{enumerate}
\item[(a)] $X_{\mathcal{B},\widetilde{\tau}}'$ is a closed subspace of the complete space $(X,\tau)_{b}'$. 
In particular, $(X_{\mathcal{B},\widetilde{\tau}}',\beta)$ is complete. 
\item[(b)] If $(X,\tau)$ is a DF-space, then $(X_{\mathcal{B},\widetilde{\tau}}',\beta)$ is 
a Fr\'echet space. 
\item[(c)] If $(X,\tau)$ is normable, then $(X_{\mathcal{B},\widetilde{\tau}}',\beta)$ is 
completely normable. 
\end{enumerate}
\end{prop}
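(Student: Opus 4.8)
The plan is to prove assertion (a) first and then read off (b) and (c) from it, the point being that in both special cases the topology $\beta$ acquires an explicit metric resp.\ normed structure while the subspace remains closed in a complete space.

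For (a) I would proceed in three steps. \emph{Step 1: the ambient space is complete.} Since $(X,\tau)$ is bornological, the strong dual $(X,\tau)_{b}'$ is complete; this is the classical fact that the strong dual of a bornological space is complete (available in the standard references on locally convex spaces). \emph{Step 2: the candidate sits inside $(X,\tau)'$.} Here $\operatorname{(BBC)}$ enters decisively. Take $x^{\star}\in X_{\mathcal{B},\widetilde{\tau}}'$ and a $\tau$-bounded set $B$; by $\operatorname{(BBC)}$ choose an absolutely convex $\tau$-bounded $\widetilde{\tau}$-compact set $K\supseteq B$. As $K\in\mathcal{B}$, the restriction $x^{\star}_{\mid K}$ is $\widetilde{\tau}$-continuous, so $x^{\star}(K)$ is a compact, hence bounded, subset of $\K$, and therefore $x^{\star}$ is bounded on $B$. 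Thus $x^{\star}$ is locally bounded, and since $(X,\tau)$ is bornological this forces $x^{\star}\in(X,\tau)'$. Consequently $X_{\mathcal{B},\widetilde{\tau}}'$ is a linear subspace of $(X,\tau)_{b}'$ whose $\beta$-topology coincides with the one induced from $(X,\tau)_{b}'$, both being uniform convergence on the $\tau$-bounded sets.

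\emph{Step 3: closedness.} Let $(x_{i}')$ be a net in $X_{\mathcal{B},\widetilde{\tau}}'$ converging in $(X,\tau)_{b}'$ to some $x'$; I must verify that $x'_{\mid B}$ is $\widetilde{\tau}$-continuous for every $B\in\mathcal{B}$. Since $B$ is $\tau$-bounded, $\beta$-convergence yields $x_{i}'\to x'$ uniformly on $B$, while each $x_{i,\mid B}'$ is $\widetilde{\tau}$-continuous; the uniform-limit theorem (the usual $\varepsilon/3$ argument, valid for nets) then gives that $x'_{\mid B}$ is $\widetilde{\tau}$-continuous. Hence $X_{\mathcal{B},\widetilde{\tau}}'$ is closed in the complete space $(X,\tau)_{b}'$ and is itself complete, which is (a). For (b), a DF-space has a fundamental sequence $(B_{n})$ of $\tau$-bounded sets, so the seminorms $p_{B_{n}}(x')=\sup_{x\in B_{n}}|x'(x)|$ form a countable fundamental system for $\beta$; thus $(X,\tau)_{b}'$ is metrizable and, being complete by (a), is Fr\'echet, whence its closed subspace $(X_{\mathcal{B},\widetilde{\tau}}',\beta)$ is Fr\'echet. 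For (c), if $(X,\tau)$ is normable its closed unit ball is a largest $\tau$-bounded set up to scalars, so $\beta$ is generated by the single dual norm and $(X,\tau)_{b}'$ is completely normable; the closed subspace $(X_{\mathcal{B},\widetilde{\tau}}',\beta)$ is then completely normable as well.

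The one genuinely delicate point is Step~2: without the inclusion $X_{\mathcal{B},\widetilde{\tau}}'\subseteq(X,\tau)'$ — which is exactly where $\operatorname{(BBC)}$ (through $\widetilde{\tau}$-compactness) and bornologicality are both needed — the space $X_{\mathcal{B},\widetilde{\tau}}'$ would fail to embed into $(X,\tau)_{b}'$ and the entire \textquotedblleft closed subspace of a complete space\textquotedblright{} strategy would break down. The closedness in Step~3 and the identification of $\beta$ in (b) and (c) are then routine.
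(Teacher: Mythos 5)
Your proof is correct; note that the paper itself does not prove this proposition but imports it from \cite[Proposition~3.7]{kruse2023a}, and your argument (use $\operatorname{(BBC)}$ plus bornologicity to get $X_{\mathcal{B},\widetilde{\tau}}'\subseteq(X,\tau)'$, closedness via the uniform-limit theorem on each bounded set, then metrizability resp.\ normability of $\beta$ in the DF resp.\ normable case) is exactly the standard route taken there. No gaps.
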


We recall from \cite[p.~1611--1612]{kruse2023a} that a topological space $\Omega$ is said to be a $gk_{\R}$\emph{-space} 
if for any completely regular space $Y$ and any map $f\colon\Omega\to Y$, 
whose restriction to each compact $K\subset\Omega$ is continuous, the map is already continuous on 
$\Omega$. Examples of Hausdorff $gk_{\R}$-spaces are Hausdorff $k$-spaces, metrisable spaces, 
locally compact Hausdorff spaces and strong duals of Fr\'echet--Montel spaces (\emph{DFM-spaces}). 
Further, for a locally convex Hausdorff space $\F$ of $\K$-valued functions on a non-empty set $\Omega$ 
we define $\delta_{x}\colon\F\to\K$, $\delta_{x}(f)\coloneqq f(x)$, for $x\in\Omega$. 

\begin{cor}[{\cite[Corollary 4.1, Remark 4.3, Theorem 4.10, p.~1609, 1612]{kruse2023a}}]\label{cor:scb_linearisation}
Let $(\F,\tau)$ be a bornological locally convex Hausdorff space of $\K$-valued functions on a non-empty set $\Omega$ satisfying 
$\operatorname{(BBC)}$ and $\operatorname{(CNC)}$ for some $\widetilde{\tau}\leq \tau_{\operatorname{p}}$ and $\mathcal{B}$ 
the family of $\tau$-bounded sets. Then the following assertions hold.
\begin{enumerate}
\item[(a)] $\Delta(x)\coloneqq\delta_{x}\in\F_{\mathcal{B},\widetilde{\tau}}'$ for all $x\in\Omega$ and 
$(\Delta,\F_{\mathcal{B},\widetilde{\tau}}',\mathcal{I})$ is a strong complete barrelled linearisation of $\F$ 
where 
\[
\mathcal{I}\colon (\F,\tau)\to (\F_{\mathcal{B},\widetilde{\tau}}',\beta)_{b}',\;
f\longmapsto [f'\mapsto f'(f)].
\]
\item[(b)] If $\Omega$ is a $gk_{\R}$-space and $\F$ a space of continuous functions, then the map
$\Delta\colon \Omega\to (\F_{\mathcal{B},\widetilde{\tau}}',\beta)$ is continuous. 
\end{enumerate}
\end{cor}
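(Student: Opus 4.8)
The plan is to prove \prettyref{cor:scb_linearisation} by specialising \prettyref{prop:predual_complete} to the concrete space $\F$ of functions and then verifying the two defining properties of a strong linearisation together with the continuity claim in part (b). First I would set $X\coloneqq(\F,\tau)$ in \prettyref{prop:predual_complete}; since $\F$ is bornological and satisfies $\operatorname{(BBC)}$ for $\widetilde{\tau}$, that proposition already guarantees that $(\F_{\mathcal{B},\widetilde{\tau}}',\beta)$ is a complete space that is a closed subspace of the complete strong dual $(\F,\tau)_{b}'$. Completeness gives barrelledness here because a complete bornological space is barrelled, so the candidate predual $Y\coloneqq\F_{\mathcal{B},\widetilde{\tau}}'$ lies in the desired class; this is where I would invoke the relevant standard fact about bornological and barrelled spaces. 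The point of part (a) is then to check that the evaluation functionals really land in this space and that the canonical map $\mathcal{I}$ is a strong linearisation.

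For the membership $\delta_{x}\in\F_{\mathcal{B},\widetilde{\tau}}'$ I would argue as follows: by definition of $\F_{\mathcal{B},\widetilde{\tau}}'$, one needs $(\delta_{x})_{\mid B}$ to be $\widetilde{\tau}$-continuous for every $\tau$-bounded $B$. Since $\widetilde{\tau}\leq\tau_{\operatorname{p}}$, and $\delta_{x}$ is by construction the point-evaluation at $x$, which is $\tau_{\operatorname{p}}$-continuous on all of $\F$, its restriction to any subset is $\tau_{\operatorname{p}}$-continuous and a fortiori continuous for the coarser topology $\widetilde{\tau}$; hence $\Delta(x)=\delta_{x}$ is a well-defined element of the candidate predual. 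Next I would verify the linearisation identity $\mathcal{I}(f)\circ\Delta=f$: unwinding the definitions, $\mathcal{I}(f)(\Delta(x))=\Delta(x)(f)=\delta_{x}(f)=f(x)$ for every $x\in\Omega$, so $\mathcal{I}(f)\circ\Delta=f$ holds pointwise. It remains to see that $\mathcal{I}$ is a topological isomorphism from $(\F,\tau)$ onto the strong dual of $(\F_{\mathcal{B},\widetilde{\tau}}',\beta)$; this is precisely the content of the biduality statement for bornological spaces satisfying the two conditions, and here I would cite the construction underlying \prettyref{prop:predual_complete} (the Ng-type duality) and the role of $\operatorname{(CNC)}$, which ensures that the original topology $\tau$ is recovered from the bounded sets, so that $\mathcal{I}$ is onto with continuous inverse.

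For part (b) the plan is to show that $\Delta\colon\Omega\to(\F_{\mathcal{B},\widetilde{\tau}}',\beta)$ is continuous under the extra hypotheses that $\Omega$ is a $gk_{\R}$-space and $\F$ consists of continuous functions. Using the defining property of a $gk_{\R}$-space, it suffices to prove that the restriction of $\Delta$ to each compact $K\subset\Omega$ is continuous into the completely regular space $(\F_{\mathcal{B},\widetilde{\tau}}',\beta)$. On such a $K$ I would estimate the $\beta$-seminorms of $\Delta(x)-\Delta(y)$: a typical seminorm is the supremum over a $\tau$-bounded set $B$ of $|\delta_{x}(f)-\delta_{y}(f)|=|f(x)-f(y)|$, and since each $f\in\F$ is continuous and $K$ is compact, one obtains uniform control of these differences via an equicontinuity argument on $B$. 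The main obstacle I expect is exactly this uniform estimate: one must pass from pointwise continuity of the individual functions $f$ to continuity of $\Delta$ in the strong topology, which requires a form of uniform (equicontinuity-type) control of the family $B$ on the compact set $K$; the $\operatorname{(BBC)}$ hypothesis, which embeds each bounded set into a $\widetilde{\tau}$-compact absolutely convex set, is what I would lean on to supply this uniformity before invoking the $gk_{\R}$-property to conclude global continuity.
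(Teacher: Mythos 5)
The paper itself offers no proof of this corollary---it is imported verbatim from \cite[Corollary 4.1, Remark 4.3, Theorem 4.10]{kruse2023a}---so a self-contained argument is necessarily a different route; unfortunately yours has genuine gaps. The central one is in part (a): your justification that $(\delta_{x})_{\mid B}$ is $\widetilde{\tau}$-continuous runs the implication backwards. A map that is continuous for $\tau_{\operatorname{p}}$ on its domain is automatically continuous for every \emph{finer} domain topology, not for every coarser one; coarsening the domain topology removes open sets and can destroy continuity. So from ``$\delta_{x}$ is $\tau_{\operatorname{p}}$-continuous'' and ``$\widetilde{\tau}\leq\tau_{\operatorname{p}}$'' you cannot conclude $\widetilde{\tau}$-continuity ``a fortiori''. (The inequality as printed in the corollary is at odds with every other occurrence in the paper---\prettyref{thm:extension}, the corollaries of Section 5 and the examples with $\widetilde{\tau}=\tau_{\operatorname{co}}\geq\tau_{\operatorname{p}}$ all require $\tau_{\operatorname{p}}\leq\widetilde{\tau}$---so it is presumably a typo; under $\tau_{\operatorname{p}}\leq\widetilde{\tau}$ your conclusion is immediate, but the inference you actually wrote is the false converse, and you did not notice the tension.)

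Two further gaps. First, \prettyref{prop:predual_complete} yields only completeness of $(\F_{\mathcal{B},\widetilde{\tau}}',\beta)$; it contains neither the barrelledness of this space nor the statement that $\mathcal{I}$ is a topological isomorphism onto its strong dual, and these are the real content of ``strong complete barrelled linearisation''. Your appeal to ``a complete bornological space is barrelled'' transfers the bornologicity of $\F$ to the candidate predual, which is a closed subspace of $(\F,\tau)_{b}'$ and is not known to be bornological; barrelledness here amounts to showing that $\sigma(\F,\F_{\mathcal{B},\widetilde{\tau}}')$-bounded subsets of $\F$ are $\tau$-bounded, and the surjectivity and openness of $\mathcal{I}$ is exactly where $\operatorname{(BBC)}$ (via a Ng--Mujica type compactness/completeness argument) and $\operatorname{(CNC)}$ must enter; neither is supplied by citing \prettyref{prop:predual_complete}. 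Second, in part (b) the decisive step---that every $\tau$-bounded set $B$ satisfies $\sup_{f\in B}|f(y)-f(x)|\to 0$ as $y\to x$ in a compact $K\subset\Omega$, i.e.\ is equicontinuous on compacta---is precisely what you label ``the main obstacle'' and leave open. It is not a formal consequence of the continuity of each $f$ together with the $\widetilde{\tau}$-compactness of the $\operatorname{(BBC)}$-hulls: a pointwise-compact, bounded family of continuous functions need not be equicontinuous, so an actual argument (for instance joint continuity of the evaluation map on $K$ times the compact hull, or an Ascoli-type step exploiting the specific $\widetilde{\tau}$) is required and missing.
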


We note that the conditions $\operatorname{(BBC)}$ and $\operatorname{(CNC)}$ for some $\widetilde{\tau}\leq \tau_{\operatorname{p}}$ are also necessary for the existence of a strong complete barrelled linearisation of a bornological 
function space $\F$ (see \cite[Theorem 4.5, p.~1609]{kruse2023a}). 

Let us turn to some examples from \cite{{kruse2023a}}. Let $\Omega$ be a non-empty topological Hausdorff space. 
We call $\mathcal{V}$ a \emph{directed family of continuous weights} 
if $\mathcal{V}$ is a family of continuous functions $v\colon\Omega\to[0,\infty)$ such that for every $v_{1},v_{2}\in \mathcal{V}$ there are 
$C\geq 0$ and $v_{0}\in \mathcal{V}$ with $\max(v_{1},v_{2})\leq Cv_{0}$ on $\Omega$. 
We call a directed family of continuous weights $\mathcal{V}$ \emph{point-detecting} if for every $x\in\Omega$ there is $v\in \mathcal{V}$ 
such that $v(x)>0$. For an open set $\Omega\subset\R^{d}$ we denote by $\mathcal{C}^{\infty}(\Omega)$ the space of $\K$-valued infinitely continuously partially differentiable functions on $\Omega$. 
The next examples are slight generalisations of \cite[p.~34]{bonet2002} and \cite[3.~Examples B, p.~125--126]{bierstedt1992} where the weighted spaces 
$\mathcal{HV}(\Omega)$ and $\mathcal{VH}(\Omega)$ of holomorphic functions on an open connected set $\Omega\subset\C^{d}$ are considered and $\mathcal{V}$ is a point-detecting Nachbin family of continuous weights. 
Using \prettyref{prop:predual_complete} and \prettyref{cor:scb_linearisation}, 
we get the following examples of continuous strong complete barrelled (Fr\'echet, DF-, Banach) linearisations.

\begin{exa}[{\cite[Examples 3.3, 3.25, 4.11, p.~1597--1598, 1607--1608, 1613]{kruse2023a}}]\label{ex:cont_strong_lin}
Let $\Omega\subset\R^{d}$ be open and $P(\partial)$ a hypoelliptic linear partial differential operator on $\mathcal{C}^{\infty}(\Omega)$. 

(i) Let $\mathcal{V}$ be a point-detecting directed family of continuous weights. We define the space
\[
\mathcal{C}_{P}\mathcal{V}(\Omega)\coloneqq\{f\in\mathcal{C}_{P}(\Omega)\;|\;\forall\;v\in \mathcal{V}:\;\|f\|_{v}\coloneqq\sup_{x\in\Omega}|f(x)|v(x)<\infty\},
\] 
where $\mathcal{C}_{P}(\Omega)\coloneqq\{f\in\mathcal{C}^{\infty}(\Omega)\;|\;f\in\ker{P(\partial)}\}$, and equip 
$\mathcal{C}_{P}\mathcal{V}(\Omega)$ with the locally convex Hausdorff topology $\tau_{\mathcal{V}}$ induced by the seminorms $(\|\cdot\|_{v})_{v\in \mathcal{V}}$. 
If the space $(\mathcal{C}_{P}\mathcal{V}(\Omega),\tau_{\mathcal{V}})$ is bornological, then $(\Delta,\mathcal{C}_{P}\mathcal{V}(\Omega)_{\mathcal{B},\tau_{\operatorname{co}}}',\mathcal{I})$ is a continuous strong complete barrelled 
linearisation of $\mathcal{C}_{P}\mathcal{V}(\Omega)$.
If $\mathcal{V}$ is countable and \emph{increasing}, i.e.~$v_{n}\leq v_{n+1}$ for all $n\in\N$, then $(\Delta,\mathcal{C}_{P}\mathcal{V}(\Omega)_{\mathcal{B},\tau_{\operatorname{co}}}',\mathcal{I})$ is a continuous strong complete barrelled DF-linearisation of 
$\mathcal{C}_{P}\mathcal{V}(\Omega)$, and if $\mathcal{V}=\{v\}$, then $(\Delta,\mathcal{C}_{P}v(\Omega)_{\mathcal{B},\tau_{\operatorname{co}}}',\mathcal{I})$ is a continuous strong Banach linearisation of 
$\mathcal{C}_{P}v(\Omega)$. 

(ii) Let $\mathcal{V}\coloneqq (v_{n})_{n\in\N}$ be a \emph{drecreasing}, i.e.~$v_{n+1}\leq v_{n}$ for all $n\in\N$, family of continuous functions $v_{n}\colon\Omega\to(0,\infty)$. In addition, let $\mathcal{V}$ be \emph{regularly decreasing}, 
i.e.~for every $n\in\N$ there is $m\geq n$ such that for every $U\subset\Omega$ with 
$\inf_{x\in U}v_{m}(x)/v_{n}(x)>0$ we also have $\inf_{x\in U}v_{k}(x)/v_{n}(x)>0$ for all $k\geq m+1$. 
We define the inductive limit
\[
\mathcal{VC}_{P}(\Omega)\coloneqq\lim\limits_{\substack{\longrightarrow\\n\in \N}}\,\mathcal{C}_{P}v_{n}(\Omega)
\] 
of the Banach spaces $(\mathcal{C}_{P}v_{n}(\Omega),\|\cdot\|_{v_{n}})$, and equip 
$\mathcal{VC}_{P}(\Omega)$ with its locally convex inductive limit topology ${_{\mathcal{V}}\tau}$. 
Then $(\mathcal{VC}_{P}(\Omega),{_{\mathcal{V}}\tau})$ is a (ultra)bornological Hausdorff DF-space and 
$(\Delta,\mathcal{VC}_{P}(\Omega)_{\mathcal{B},\tau_{\operatorname{co}}}',\mathcal{I})$ is 
a continuous strong Fr\'echet linearisation of $\mathcal{VC}_{P}(\Omega)$.
\end{exa}

Now, we turn to the relation between strong linearisations and (strong) uniqueness of preduals. 
Let $X$ be a dual space with predual $(Y,\varphi)$. 
Considering $X$ as the dual space of $Y$, we define the system of seminorms 
\[
p_{N}(x)\coloneqq\sup_{y\in N}|\varphi(x)(y)|,\quad x\in X,
\]
for finite sets $N\subset Y$, which induces a locally convex Hausdorff topology on $X$ 
w.r.t.~the dual paring $\langle X,Y,\varphi\rangle$ and we denote this topology by 
$\sigma_{\varphi}(X,Y)$.

\begin{prop}\label{prop:strongly_unique_C_predual}
Let $\F$ be a locally convex Hausdorff space of $\K$-valued functions on a non-empty set $\Omega$, 
$\mathcal{C}$ be a subclass of the class of complete barrelled locally convex Hausdorff 
spaces such that $\mathcal{C}$ is closed under topological isomorphisms, and $(\delta,Y,T)$ a strong linearisation of $\F$ such that $Y\in\mathcal{C}$. Consider the following assertions.
\begin{enumerate}
\item[(a)] $\F$ has a strongly unique $\mathcal{C}$ predual. 
\item[(b)] For every predual $(Z,\varphi)$ of $\F$ such that $Z\in\mathcal{C}$ and every $x\in\Omega$ there is a (unique) $z_{x}\in Z$ with 
$T(\cdot)(\delta(x))=\varphi(\cdot)(z_{x})$.
\item[(c)] For every predual $(Z,\varphi)$ of $\F$ such that $Z\in\mathcal{C}$ and every $x\in\Omega$ it holds
$T(\cdot)(\delta(x))\in(\F,\sigma_{\varphi}(\F,Z))'$.
\item[(d)] For every predual $(Z,\varphi)$ of $\F$ such that $Z\in\mathcal{C}$ there is a (unique) map 
$\widetilde{\delta}\colon\Omega\to Z$ such that $(\widetilde{\delta},Z,\varphi)$ is a strong linearisation of $\F$.
\end{enumerate}
Then it holds (a)$\Rightarrow$(b)$\Leftrightarrow$(c)$\Leftrightarrow$(d). If the family $(\delta(x))_{x\in\Omega}$ is linearly independent, then it holds (b)$\Rightarrow$(a). 
\end{prop}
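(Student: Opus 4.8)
The plan is to establish the three reformulations (b)$\Leftrightarrow$(c)$\Leftrightarrow$(d) directly and to reduce both nontrivial implications to the equivalence of preduals from \prettyref{prop:strongly_unique_equivalent}, using throughout that $T(f)(\delta(x))=f(x)$ for all $f\in\F$, $x\in\Omega$, and that $T$ and $\varphi$ are surjective onto $Y'$ and $Z'$. For (b)$\Leftrightarrow$(c) I would observe that the seminorms $p_{N}$ defining $\sigma_{\varphi}(\F,Z)$ are generated by the functionals $f\mapsto\varphi(f)(z)$, $z\in Z$, whence $(\F,\sigma_{\varphi}(\F,Z))'=\{\varphi(\cdot)(z)\mid z\in Z\}$ for the separating dual pair $\langle\F,Z,\varphi\rangle$; thus $T(\cdot)(\delta(x))$ lies in this dual exactly when it equals some $\varphi(\cdot)(z_{x})$, which is (b). For (b)$\Leftrightarrow$(d) I set $\widetilde{\delta}(x):=z_{x}$ and note that $\varphi(f)(z_{x})=T(f)(\delta(x))=f(x)$ is precisely the strong linearisation identity $\varphi(f)\circ\widetilde{\delta}=f$; the uniqueness of $z_{x}$, and hence of $\widetilde{\delta}$, follows from the Hausdorff property of $Z$ together with surjectivity of $\varphi$, as in \prettyref{rem:lin_unique_delta}.

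For (a)$\Rightarrow$(b) I apply \prettyref{prop:strongly_unique_equivalent} to obtain $(Z,\varphi)\sim(Y,T)$ for every predual $(Z,\varphi)$ with $Z\in\mathcal{C}$, so there is a topological isomorphism $\lambda\colon Y\to Z$ with $\lambda^{t}=T\circ\varphi^{-1}$. Putting $z_{x}:=\lambda(\delta(x))$ and computing $\varphi(f)(z_{x})=\lambda^{t}(\varphi(f))(\delta(x))=(T\circ\varphi^{-1})(\varphi(f))(\delta(x))=T(f)(\delta(x))$ then yields (b).

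The substance lies in (b)$\Rightarrow$(a) under linear independence. By \prettyref{prop:strongly_unique_equivalent} it suffices, since $\sim$ is an equivalence relation, to show $(Z,\varphi)\sim(Y,T)$ for every predual $(Z,\varphi)$ with $Z\in\mathcal{C}$, i.e.~to build a topological isomorphism $\lambda\colon Y\to Z$ with $\lambda^{t}=\alpha$ where $\alpha:=T\circ\varphi^{-1}\colon Z_{b}'\to Y_{b}'$. From (b)$\Leftrightarrow$(d) I obtain the strong linearisation $(\widetilde{\delta},Z,\varphi)$, and the natural candidate sends $\delta(x)\mapsto\widetilde{\delta}(x)$; linear independence of $(\delta(x))_{x\in\Omega}$ is exactly what makes this assignment well defined on $\operatorname{span}\{\delta(x)\mid x\in\Omega\}$. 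To globalise it I pass to the biduals via the identity $\alpha^{t}(\mathcal{J}_{Y}(\delta(x)))=\mathcal{J}_{Z}(\widetilde{\delta}(x))$, which shows that $\alpha^{t}$ maps $\mathcal{J}_{Y}(\operatorname{span}\{\delta(x)\})$ into $\mathcal{J}_{Z}(Z)$. Surjectivity of $T$ onto $Y'$ makes this span dense in $Y$; completeness and barrelledness of $Y,Z\in\mathcal{C}$ make $\mathcal{J}_{Y},\mathcal{J}_{Z}$ topological isomorphisms onto closed subspaces of the respective biduals by \cite[11.2.2 Proposition, p.~222]{jarchow1981}; and $\alpha^{t}$ is continuous. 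Hence $\alpha^{t}(\mathcal{J}_{Y}(Y))\subseteq\mathcal{J}_{Z}(Z)$, so $\lambda:=\mathcal{J}_{Z}^{-1}\circ\alpha^{t}\circ\mathcal{J}_{Y}$ is a continuous map $Y\to Z$ extending the candidate, and unwinding the definitions gives $\lambda^{t}=\alpha$. Running the same construction for $\alpha^{-1}$, using density of $\operatorname{span}\{\widetilde{\delta}(x)\}$ in $Z$, furnishes a continuous inverse, so $\lambda$ is the required topological isomorphism and $(Z,\varphi)\sim(Y,T)$.

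The hard part will be this globalisation step: promoting the map given on the dense span to a topological isomorphism of $Y$ onto $Z$ and confirming that the transpose identity $\lambda^{t}=\alpha$ survives beyond the span. The leverage comes from transporting the span condition to the biduals and from the fact that completeness together with barrelledness forces $\mathcal{J}_{Y}(Y)$ and $\mathcal{J}_{Z}(Z)$ to be closed, so that continuity of $\alpha^{t}$ closes the gap between $\operatorname{span}\{\delta(x)\}$ and all of $Y$.
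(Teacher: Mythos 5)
Your proof is correct, and while (a)$\Rightarrow$(b) and the cycle (b)$\Leftrightarrow$(c)$\Leftrightarrow$(d) coincide with the paper's argument, your treatment of the key implication (b)$\Rightarrow$(a) takes a genuinely different route. The paper works inside $Y$ and $Z$ themselves: it defines $\lambda_{0}$ on $Y_{0}=\operatorname{span}\{\delta(x)\;|\;x\in\Omega\}$ by $\delta(x)\mapsto z_{x}$ (linear independence securing well-definedness), identifies $\lambda_{0}$ with $(\Phi_{\varphi}^{-1}\circ\Phi_{T})_{\mid Y_{0}}$ to see via \prettyref{prop:predual_into_dual} that it is a topological isomorphism onto $Z_{0}$, extends it by uniform continuity to the completion, and then needs a separate Hahn--Banach argument to show $\overline{Z_{0}}=Z$ before checking $\lambda^{t}=T\circ\varphi^{-1}$ on the dense span. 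You instead transport everything to the biduals: the identity $\alpha^{t}(\mathcal{J}_{Y}(\delta(x)))=\mathcal{J}_{Z}(z_{x})$ together with density of the span, continuity of $\alpha^{t}$ for the strong topologies, and closedness of $\mathcal{J}_{Y}(Y)$ and $\mathcal{J}_{Z}(Z)$ (here is where completeness and barrelledness of the members of $\mathcal{C}$ enter, via \cite[11.2.2 Proposition, p.~222]{jarchow1981}) yields $\lambda=\mathcal{J}_{Z}^{-1}\circ\alpha^{t}\circ\mathcal{J}_{Y}$ globally, with the inverse obtained symmetrically from $\alpha^{-1}$ and the density of $\operatorname{span}\{\widetilde{\delta}(x)\;|\;x\in\Omega\}$ in $Z$. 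What your approach buys is that the extension, surjectivity and invertibility all come in one stroke from the already globally defined isomorphism $\alpha^{t}$, bypassing the extension theorem and the separate surjectivity argument; the price is the reliance on closedness of the canonical images in the biduals, which the paper only needs implicitly. One observation worth your attention: as written, your bidual construction never actually uses the linear independence of $(\delta(x))_{x\in\Omega}$ --- the well-definedness of the assignment $\delta(x)\mapsto\widetilde{\delta}(x)$ on the span is never required, since $\lambda$ is defined outright as $\mathcal{J}_{Z}^{-1}\circ\alpha^{t}\circ\mathcal{J}_{Y}$ --- so your argument appears to establish (b)$\Rightarrow$(a) in greater generality than stated; you should either double-check this or make the stronger claim explicit.
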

\begin{proof}
(b)$\Leftrightarrow$(c) This equivalence follows directly from \cite[Chap.~IV, \S1, 1.2, p.~124]{schaefer1971}.

(a)$\Rightarrow$(b) Let $(Z,\varphi)$ be predual of $\F$ such that $Z\in\mathcal{C}$. 
Since $\F$ has a strongly unique $\mathcal{C}$ predual, there is a topological isomorphism $\lambda\colon Y\to Z$ such 
that $\lambda^{t}=T\circ\varphi^{-1}$ by \prettyref{prop:strongly_unique_equivalent}. For $x\in\Omega$ we set 
$z_{x}\coloneqq \lambda(\delta(x))\in Z$ and observe that 
\[
 \varphi(f)(z_{x})
=\varphi(f)(\lambda(\delta(x)))
=\lambda^{t}(\varphi(f))(\delta(x))
=(T\circ\varphi^{-1})(\varphi(f))(\delta(x))
=T(f)(\delta(x))
\]
for all $f\in\F$. The existence of such a $z_{x}$ already implies that it is unique by \cite[Chap.~IV, \S1, 1.2, p.~124]{schaefer1971}.

(b)$\Rightarrow$(d) Let $(Z,\varphi)$ be a predual of $\F$ such that $Z\in\mathcal{C}$. We set $\widetilde{\delta}\colon\Omega\to Z$, 
$\widetilde{\delta}(x)\coloneqq z_{x}$. Then we have $\varphi(f)(\widetilde{\delta}(x))=T(f)(\delta(x))=f(x)$ for all $f\in\F$, 
which implies that $(\widetilde{\delta},Z,\varphi)$ is a strong linearisation of $\F$. The uniqueness of $\widetilde{\delta}$ follows from \prettyref{rem:lin_unique_delta}.

(d)$\Rightarrow$(b) This statement follows from the choice $z_{x}\coloneqq\widetilde{\delta}(x)\in Z$ for $x\in\Omega$. 

(b)$\Rightarrow$(a) if the family $(\delta(x))_{x\in\Omega}$ is linearly independent. 
Let $(Z,\varphi)$ be a predual of $\F$ such that $Z\in\mathcal{C}$. 
We set $Y_{0}\coloneqq\operatorname{span}\{\delta(x)\;|\;x\in\Omega\}$ and $Z_{0}\coloneqq\operatorname{span}\{z_{x}\;|\;x\in\Omega\}$ 
and note that the linear map $\lambda_{0}\colon Y_{0}\to Z_{0}$ induced by $\lambda_{0}(\delta(x))\coloneqq z_{x}$ for $x\in\Omega$ 
is well-defined since $(\delta(x))_{x\in\Omega}$ is linearly independent. Clearly, $\lambda_{0}$ is surjective. Next, we show that it is 
also injective. Let $y\in Y_{0}$ such that $\lambda_{0}(y)=0$. Then $y$ can be represented as $y=\sum_{x\in\Omega}a_{x}\delta(x)$ 
with finitely many non-zero $a_{x}\in\K$. We note that 
\[
 0
=\lambda_{0}(y)
=\lambda_{0}(\sum_{x\in\Omega}a_{x}\delta(x))
=\sum_{x\in\Omega}a_{x}z_{x}
\]
and 
\[
 0
=\varphi(f)(\sum_{x\in\Omega}a_{x}z_{x})
=\sum_{x\in\Omega}a_{x}\varphi(f)(z_{x})
=T(f)(\sum_{x\in\Omega}a_{x}\delta(x))
=\Phi_{T}(y)(f)
\]
for all $f\in\F$. Due to the injectivity of $\Phi_{T}$ by \prettyref{prop:predual_into_dual} we get that 
$y=0$. Thus $\lambda_{0}$ is injective. 

We claim that $\lambda_{0}$ is a topological isomorphism. Indeed, for every $x\in\Omega$ we have 
\[
\Phi_{\varphi}(z_{x})=\varphi(\cdot)(z_{x})=T(\cdot)(\delta(x))=\Phi_{T}(\delta(x))
\]
and hence $\lambda_{0}(\delta(x))=(\Phi_{\varphi}^{-1}\circ\Phi_{T})(\delta(x))$, implying our claim as 
$(\Phi_{\varphi}^{-1}\circ\Phi_{T})_{\mid Y_{0}}\colon Y_{0}\to Z_{0}$ is a topological isomorphism by 
\prettyref{prop:predual_into_dual}. Due to \cite[3.4.2 Theorem, p.~61--62]{jarchow1981} and the density of 
$Y_{0}$ in the complete space $Y$ by \cite[Proposition 2.7, p.~1596]{kruse2023a} we can uniquely extend 
$\lambda_{0}$ to a topological isomorphism $\lambda\colon Y \to \overline{Z_{0}}$ where $\overline{Z_{0}}$ denotes 
the closure of $Z_{0}$ in the complete space $Z$. 

Next, we show that $\overline{Z_{0}}=Z$. Suppose there is some $z\in Z$ such that $z\notin\overline{Z_{0}}$. 
Since $\overline{Z_{0}}$ is closed in $Z$, there is $z'\in Z'$ such that $z'_{\mid \overline{Z_{0}}}=0$ and $z'(z)=1$ by 
the Hahn--Banach theorem. It follows that $0\neq(T\circ\varphi^{-1})(z')\in Y'$ and 
\[
 (T\circ\varphi^{-1})(z')(\delta(x))
=T(\varphi^{-1}(z'))(\delta(x))
=\varphi(\varphi^{-1}(z'))(z_{x})
=z'(z_{x})
=0
\]
for all $x\in\Omega$. The density of $Y_{0}$ in $Y$ implies that $(T\circ\varphi^{-1})(z')=0$, which is a contradiction.

Therefore $\lambda\colon Y \to Z$ is a topological isomorphism. Further, we have for every $z'\in Z'$ with 
$y'\coloneqq (T\circ\varphi^{-1})(z')\in Y'$ that 
\begin{align*}
  \lambda^{t}(z')(\delta(x))
&=z'(\lambda(\delta(x)))
 =z'(\lambda_{0}(\delta(x)))
 =z'(z_{x})
 =(\varphi\circ T^{-1})(y')(z_{x})\\
&=\varphi(T^{-1}(y'))(z_{x})
 =T(T^{-1}(y'))(\delta(x))
 =y'(\delta(x))\\
&=(T\circ\varphi^{-1})(z')(\delta(x))
\end{align*}
for all $x\in\Omega$. Again, the density of $Y_{0}$ in $Y$ implies that $\lambda^{t}=T\circ\varphi^{-1}$, which means 
that $(Z,\varphi)\sim (Y,T)$. We deduce from \prettyref{prop:strongly_unique_equivalent} that $\F$ has a strongly unique $\mathcal{C}$ predual.
\end{proof}

\begin{cor}\label{cor:unique_C_predual}
Let $\F$ be a locally convex Hausdorff space of $\K$-valued functions on a non-empty set $\Omega$, 
$\mathcal{C}$ be a subclass of the class of complete barrelled locally convex Hausdorff 
spaces such that $\mathcal{C}$ is closed under topological isomorphisms, and $(\delta,Y,T)$ a strong linearisation of $\F$ such that $Y\in\mathcal{C}$. Consider the following assertions.
\begin{enumerate}
\item[(a)] $\F$ has a unique $\mathcal{C}$ predual. 
\item[(b)] For every predual $(Z,\varphi)$ of $\F$ such that $Z\in\mathcal{C}$ there is a topological isomorphism 
$\psi\colon\F\to Z_{b}'$ such that for every $x\in\Omega$ there is a (unique) $z_{x}\in Z$ with 
$T(\cdot)(\delta(x))=\psi(\cdot)(z_{x})$.
\item[(c)] For every predual $(Z,\varphi)$ of $\F$ such that $Z\in\mathcal{C}$ there is a topological isomorphism 
$\psi\colon\F\to Z_{b}'$ such that for every $x\in\Omega$ it holds $T(\cdot)(\delta(x))\in(\F,\sigma_{\psi}(\F,Z))'$.
\item[(d)] For every predual $(Z,\varphi)$ of $\F$ such that $Z\in\mathcal{C}$ there is a topological isomorphism 
$\psi\colon\F\to Z_{b}'$ and a (unique) map $\widetilde{\delta}\colon\Omega\to Z$ such that $(\widetilde{\delta},Z,\psi)$ 
is a strong linearisation of $\F$.
\end{enumerate}
Then it holds (a)$\Rightarrow$(b)$\Leftrightarrow$(c)$\Leftrightarrow$(d). If the family $(\delta(x))_{x\in\Omega}$ is linearly independent, then it holds (b)$\Rightarrow$(a). 
\end{cor}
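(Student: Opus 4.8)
The plan is to run essentially the same scheme as in the proof of \prettyref{prop:strongly_unique_C_predual}, observing that $(Y,T)$ is itself a predual of $\F$ with $Y\in\mathcal{C}$; the only conceptual change is that the rigid relation $\lambda^{t}=T\circ\varphi^{-1}$ is relaxed. For a \emph{unique} rather than strongly unique predual we are free to replace the given isomorphism by a suitable $\psi\colon\F\to Z_{b}'$, and the correct bookkeeping device for this freedom is \prettyref{prop:unique_equivalent} in place of \prettyref{prop:strongly_unique_equivalent}. With the chosen $\psi$ fixed throughout, the equivalences (b)$\Leftrightarrow$(c)$\Leftrightarrow$(d) then become formally identical to their counterparts in \prettyref{prop:strongly_unique_C_predual}.

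Concretely, for (b)$\Leftrightarrow$(c) I would note that, for a \emph{fixed} topological isomorphism $\psi\colon\F\to Z_{b}'$, the existence for each $x\in\Omega$ of a (necessarily unique) $z_{x}\in Z$ with $T(\cdot)(\delta(x))=\psi(\cdot)(z_{x})$ is precisely the continuity of the functional $T(\cdot)(\delta(x))$ on $\F$ for $\sigma_{\psi}(\F,Z)$, i.e.~its membership in $(\F,\sigma_{\psi}(\F,Z))'$; this is \cite[Chap.~IV, \S1, 1.2, p.~124]{schaefer1971}. As (b) and (c) quantify over the same $\psi$, the equivalence is immediate. For (b)$\Rightarrow$(d) I would set $\widetilde{\delta}(x)\coloneqq z_{x}$, so that $\psi(f)(\widetilde{\delta}(x))=T(f)(\delta(x))=f(x)$ makes $(\widetilde{\delta},Z,\psi)$ a strong linearisation, its $\delta$-map being unique by \prettyref{rem:lin_unique_delta}; for (d)$\Rightarrow$(b) I would conversely take $z_{x}\coloneqq\widetilde{\delta}(x)$.

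For (a)$\Rightarrow$(b) I would apply \prettyref{prop:unique_equivalent} to the two preduals $(Y,T)$ and $(Z,\varphi)$, both in $\mathcal{C}$, obtaining a topological isomorphism $\psi\colon\F\to Z_{b}'$ with $(Z,\psi)\sim(Y,T)$ and hence a topological isomorphism $\lambda\colon Y\to Z$ with $\lambda^{t}=T\circ\psi^{-1}$. Setting $z_{x}\coloneqq\lambda(\delta(x))$, the chain $\psi(f)(z_{x})=\lambda^{t}(\psi(f))(\delta(x))=(T\circ\psi^{-1})(\psi(f))(\delta(x))=T(f)(\delta(x))$ yields (b), uniqueness of $z_{x}$ again coming from \cite[Chap.~IV, \S1, 1.2, p.~124]{schaefer1971}.

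The substantive step, which I expect to be the main obstacle, is (b)$\Rightarrow$(a) under linear independence of $(\delta(x))_{x\in\Omega}$; here I would reproduce the construction from \prettyref{prop:strongly_unique_C_predual} with $\psi$ in place of $\varphi$. For an arbitrary predual $(Z,\varphi)$ with $Z\in\mathcal{C}$ I would take the $\psi$ and $(z_{x})_{x\in\Omega}$ supplied by (b), put $Y_{0}\coloneqq\operatorname{span}\{\delta(x)\;|\;x\in\Omega\}$ and $Z_{0}\coloneqq\operatorname{span}\{z_{x}\;|\;x\in\Omega\}$, and use linear independence to define $\lambda_{0}\colon Y_{0}\to Z_{0}$ by $\delta(x)\mapsto z_{x}$; its injectivity follows from that of $\Phi_{T}$, and the identity $\Phi_{\psi}(z_{x})=\Phi_{T}(\delta(x))$ exhibits $\lambda_{0}=(\Phi_{\psi}^{-1}\circ\Phi_{T})_{\mid Y_{0}}$ as a topological isomorphism via \prettyref{prop:predual_into_dual}. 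Extending $\lambda_{0}$ by the density of $Y_{0}$ in the complete space $Y$ \cite[Proposition 2.7, p.~1596]{kruse2023a} together with \cite[3.4.2 Theorem, p.~61--62]{jarchow1981}, and then showing $\overline{Z_{0}}=Z$ by the Hahn--Banach argument, I obtain a topological isomorphism $\lambda\colon Y\to Z$. The point specific to the unique (as opposed to strongly unique) case is that this already suffices: since $(Y,T)$ is a $\mathcal{C}$ predual to which every $\mathcal{C}$ predual $Z$ is thus topologically isomorphic, any two $\mathcal{C}$ preduals are topologically isomorphic through $Y$, which is \prettyref{defn:unique_predual} (a), and I need not verify the stronger relation $\lambda^{t}=T\circ\psi^{-1}$. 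The only genuine care required is to check that all uses of $\Phi_{\psi}$ and of density and completeness are legitimate because $Z\in\mathcal{C}$ is complete and barrelled, hence quasi-barrelled.
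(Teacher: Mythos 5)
Your proposal is correct and follows exactly the route the paper takes: the paper's own proof is literally the one-line instruction to rerun the proof of \prettyref{prop:strongly_unique_C_predual} with $\varphi$ replaced by $\psi$ and \prettyref{prop:strongly_unique_equivalent} replaced by \prettyref{prop:unique_equivalent}, which is what you have spelled out. Your closing observation that for mere uniqueness one only needs every $\mathcal{C}$ predual to be topologically isomorphic to $Y$ (so that any two are isomorphic through $Y$, without verifying $\lambda^{t}=T\circ\psi^{-1}$) is a correct and slightly more economical way to finish than invoking \prettyref{prop:unique_equivalent}, but it is not a different argument in substance.
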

\begin{proof}
This statement follows from the proof of \prettyref{prop:strongly_unique_C_predual} with $\varphi$ replaced by $\psi$ and 
\prettyref{prop:strongly_unique_equivalent} replaced by \prettyref{prop:unique_equivalent}.
\end{proof}

Looking at statement (b) of \prettyref{prop:strongly_unique_C_predual} and \prettyref{cor:unique_C_predual}, 
we also see that continuous point evaluation functionals naturally appear in the context of 
strong complete barrelled linearisations.

\begin{rem}\label{rem:point_eval_scb_lin}
Let $\F$ be a locally convex Hausdorff space of $\K$-valued functions on a non-empty set $\Omega$, 
$(\delta,Y,T)$ a strong linearisation of $\F$ such that $Y$ is quasi-barrelled. 
Then we have $\delta_{x}=T(\cdot)(\delta(x))$ and $\delta_{x}\in\F'$ for every $x\in\Omega$ 
by \prettyref{prop:predual_into_dual} since $T(\cdot)(\delta(x))=\Phi_{T}(\delta(x))$. 
\end{rem}

In our last section we will show in \prettyref{cor:strongly_unique_C_predual_without_lin_ind} and \prettyref{cor:unique_C_predual_without_lin_ind} how the condition of linear independence 
for the implication (b)$\Rightarrow$(a) in \prettyref{prop:strongly_unique_C_predual} 
and \prettyref{cor:unique_C_predual} can be avoided. 

\begin{rem}\label{rem:lin_indep}
Let $\F$ be a linear space of $\K$-valued functions on a non-empty set 
$\Omega$ such that for every finite set $A\subset\Omega$ and $x\in A$ there is $f\in\F$ such that $f(x)\neq 0$ and $f(z)=0$ for all $z\in A\setminus \{x\}$. If $(\delta,Y,T)$ is a linearisation of $\F$, then the family $(\delta(x))_{x\in\Omega}$ is linearly independent. Indeed, for a family $(a_{x})_{x\in\Omega}$ in $\K$ with finitely many 
non-zero $a_{x}$ we have
\[
T(f)(\sum_{x\in\Omega}a_{x}\delta(x))=\sum_{x\in\Omega}a_{x}f(x)
\]
for all $f\in\F$, which implies our claim.  
\end{rem}

The condition on $\F$ in \prettyref{rem:lin_indep} is for example fulfilled if $\Omega\subset\C$ and $\F$ contains all polynomials of degree less than $|\Omega|$ where $|\Omega|$ denotes the cardinality of $\Omega$.

\section{Linearisation of vector-valued functions}
\label{sect:linearisation_vetor_valued}

In this section we derive a strong linearisation of weak vector-valued functions from a strong linearisation of 
scalar-valued functions. Let $\F$ be a locally convex Hausdorff space of $\K$-valued functions on a non-empty set $\Omega$ 
with fundamental system of seminorms $\Gamma_{\F}$. 
For a locally convex Hausdorff space $E$ over the field $\K$ with fundamental system of seminorms $\Gamma_{E}$ 
we define the \emph{space of weak $E$-valued $\mathcal{F}$-functions} by 
\[
\FE_{\sigma}\coloneqq\{f\colon \Omega\to E\;|\;\forall\;e'\in E':\;e'\circ f\in\F\}.
\]
For $p\in\Gamma_{E}$ we set $U_{p}\coloneqq\{x\in E\;|\; p(x)<1\}$ and recall that $U_{p}^{\circ}$ denotes the polar set of $U_{p}$ w.r.t.~the dual pairing $\langle E,E'\rangle$. 
We define the linear subspace of $\FE_{\sigma}$ given by 
\[
\FE_{\sigma,b}\coloneqq\{f\in\FE_{\sigma}\;|\;\forall\;q\in\Gamma_{\F},\,p\in\Gamma_{E}:\;
\|f\|_{\sigma,q,p}\coloneqq \sup_{e'\in U_{p}^{\circ}}q(e'\circ f)<\infty\}.
\]
$\FE_{\sigma,b}$ equipped with the system of seminorms 
$(\|\cdot\|_{\sigma,q,p})_{q\in\Gamma_{\F},p\in\Gamma_{E}}$ becomes a locally convex Hausdorff space. 
We note that $\mathcal{F}(\Omega,\K)_{\sigma,b}=\mathcal{F}(\Omega,\K)_{\sigma}=\F$. 
Further, for a large class of spaces $\F$, namely BC-spaces such that the point evaluation functionals belong to the dual, 
the spaces $\FE_{\sigma}$ and $\FE_{\sigma,b}$ actually coincide 
for any $E$. Let us recall the definition of a BC-space from \cite[p.~395]{powell1975}.
A locally convex Hausdorff space $X$ is called a \emph{BC-space} 
if for every Banach space $Y$ and every linear map $f\colon Y\to X$ 
with closed graph in $Y\times X$, one has that $f$ is continuous. 
A characterisation of BC-spaces is given in \cite[6.1 Corollary, p.~400--401]{powell1975}. 
Since every Banach space is ultrabornological and barrelled, 
the \cite[Closed graph theorem 24.31, p.~289]{meisevogt1997} of de Wilde 
and the Pt\'{a}k--K\={o}mura--Adasch--Valdivia closed graph theorem \cite[\S34, 9.(7), p.~46]{koethe1979} 
imply that webbed spaces and $B_{r}$-complete spaces are BC-spaces. In particular, 
the Fr\'echet space $\mathcal{C}_{P}\mathcal{V}(\Omega)$ for a countable point-detecting increasing family 
$\mathcal{V}$ and the 
complete DF-space $\mathcal{VC}_{P}(\Omega)$ for a countable decreasing family $\mathcal{V}$ 
from \prettyref{ex:cont_strong_lin} are webbed and thus BC-spaces.

\begin{rem}\label{rem:BC_space}
If $\F$ is a BC-space of $\K$-valued functions on a non-empty set $\Omega$ such that $\delta_{x}\in\F'$ 
for all $x\in\Omega$ and $E$ is a locally convex Hausdorff space over the 
field $\K$, then $\FE_{\sigma}=\FE_{\sigma,b}$ by \cite[3.18 Proposition, p.~319]{kruse2018_3}. 
\end{rem}

\begin{rem}\label{rem:cond_i_linearisation}
Let $\F$ be a locally convex Hausdorff space of $\K$-valued functions on a non-empty set $\Omega$, $Y$ a locally convex Hausdorff space 
over the field $\K$ and $\delta\colon\Omega\to Y$. Then condition (i) of \cite[Proposition 2.5, p.~1595]{kruse2023a} 
is equivalent to $\delta\in\mathcal{F}(\Omega,Y)_{\sigma}$.
\end{rem}

For two locally convex Hausdorff spaces $Y$ and $E$ over the field $\K$ we denote by $L(Y,E)$ the space of continuous linear maps 
from $Y$ to $E$. Moreover, we write $L_{b}(Y,E)$ if the space $L(Y,E)$ is equipped with the topology of uniform convergence on 
bounded subsets of $Y$.  

\begin{prop}\label{prop:linearisation_into}
Let $\F$ be a locally convex Hausdorff space of $\K$-valued functions on a non-empty set $\Omega$, 
$(\delta,Y,T)$ a strong linearisation of $\F$ and $E$ a locally convex Hausdorff space over the field $\K$ 
with fundamental system of seminorms $\Gamma_{E}$. Then the following assertions hold. 
\begin{enumerate}
\item[(a)] The map 
\[
\chi\colon L_{b}(Y,E)\to\FE_{\sigma,b},\;\chi(u)\coloneqq u\circ \delta,
\]
is well-defined, linear, continuous and injective.
\item[(b)] If $(\F,\|\cdot\|)$ is a Banach space such that $\delta_{x}\in (\F,\|\cdot\|)'$ for all $x\in\Omega$, 
$(Y,\|\cdot\|_{Y})$ a normed space and the map
$T\colon (\F,\|\cdot\|)\to ((Y,\|\cdot\|_{Y})',\|\cdot\|_{Y'})$ an isometry, 
then $\FE_{\sigma}=\FE_{\sigma,b}$ and $\chi$ is a topological isomorphism into and for all $p\in\Gamma_{E}$
\[
\|\chi(u)\|_{\sigma,p}=\sup_{\substack{y\in Y\\ \|y\|_{Y}\leq 1}}p(u(y)),\quad u\in L(Y,E).
\]
\end{enumerate}
\end{prop}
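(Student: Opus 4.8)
The plan is to unfold the definitions and verify the required properties one at a time, using the defining relation $T(f)\circ\delta=f$ of a strong linearisation throughout. For part (a), I would first show that $\chi$ is well-defined, i.e.~that $u\circ\delta\in\FE_{\sigma,b}$ for every $u\in L(Y,E)$. Given $e'\in E'$ we have $e'\circ(u\circ\delta)=(e'\circ u)\circ\delta$, and since $e'\circ u\in Y'$ the relation $T(f)\circ\delta=f$ (more precisely, surjectivity of $T$ onto $Y'$) yields $e'\circ u\circ\delta=T^{-1}(e'\circ u)\in\F$, so $u\circ\delta\in\FE_{\sigma}$. To land in the smaller space $\FE_{\sigma,b}$ I would estimate $\|u\circ\delta\|_{\sigma,q,p}=\sup_{e'\in U_{p}^{\circ}}q\bigl(T^{-1}(e'\circ u)\bigr)$; continuity of $T^{-1}\colon Y_{b}'\to\F$ bounds $q\circ T^{-1}$ by a sup over some bounded set $B\subset Y$, and since $u$ is continuous, $u$ maps $B$ into a bounded (hence $p$-bounded) set, giving finiteness. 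Linearity is immediate, and the same estimate shows continuity, since it expresses $\|\chi(u)\|_{\sigma,q,p}$ in terms of the seminorm of $u$ in $L_{b}(Y,E)$ associated with $B$ and $p$. Injectivity follows because $\chi(u)=0$ forces $e'\circ u\circ\delta=0$ for all $e'$, whence $T^{-1}(e'\circ u)=0$, so $e'\circ u=0$ for all $e'\in E'$; as $E$ is Hausdorff this gives $u\circ\delta=0$ pointwise, and then $u=0$ on the dense span of $\delta(\Omega)$ in $Y$ (density from \cite[Proposition 2.7, p.~1596]{kruse2023a}) by continuity of $u$.

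For part (b) I would specialise to the normed/isometric setting. First note that in the Banach case with $\delta_{x}\in\F'$ the equality $\FE_{\sigma}=\FE_{\sigma,b}$ holds by \prettyref{rem:BC_space}, since a Banach space is a BC-space; so it suffices to compute the seminorm and upgrade $\chi$ to a topological isomorphism into. The central computation is the displayed formula. Because $T$ is an isometry from $(\F,\|\cdot\|)$ onto $((Y,\|\cdot\|_{Y})',\|\cdot\|_{Y'})$, I have for a single seminorm $p$ (so $q=\|\cdot\|$)
\[
\|\chi(u)\|_{\sigma,p}
=\sup_{e'\in U_{p}^{\circ}}\bigl\|T^{-1}(e'\circ u)\bigr\|
=\sup_{e'\in U_{p}^{\circ}}\|e'\circ u\|_{Y'}
=\sup_{e'\in U_{p}^{\circ}}\ \sup_{\|y\|_{Y}\leq 1}\lvert e'(u(y))\rvert .
\]
Interchanging the two suprema and using the bipolar identity $\sup_{e'\in U_{p}^{\circ}}\lvert e'(x)\rvert=p(x)$ (valid as $U_p^{\circ}$ is the polar of the $p$-unit ball) collapses the inner supremum to $p(u(y))$, yielding $\|\chi(u)\|_{\sigma,p}=\sup_{\|y\|_{Y}\leq 1}p(u(y))$, which is exactly the claimed formula and is the $L_{b}(Y,E)$-seminorm of $u$ associated with the unit ball of $Y$ and $p$. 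Since this identifies the source seminorm with the target seminorm exactly, $\chi$ is a topological isomorphism onto its range.

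The main obstacle I anticipate is the seminorm bookkeeping in part (a): making the estimate $\|u\circ\delta\|_{\sigma,q,p}<\infty$ rigorous requires pairing the continuity of $T^{-1}\colon Y_b'\to\F$ (which produces a bounded set $B\subset Y$) with the continuity of $u$ (which sends $B$ to a bounded set in $E$), and then recognising the resulting bound as precisely the $L_b(Y,E)$-seminorm, so that the same inequality simultaneously delivers well-definedness \emph{and} continuity of $\chi$. The bipolar step in (b) is clean but I would be careful that the isometry hypothesis on $T$ is used in the form $\|T^{-1}y'\|=\|y'\|_{Y'}$, and that $U_p^\circ$ is taken with respect to $\langle E,E'\rangle$ so that the polar duality $\sup_{e'\in U_p^\circ}|e'(x)|=p(x)$ applies.
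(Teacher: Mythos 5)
Your proposal follows essentially the same route as the paper's proof: well-definedness and continuity both come from the single estimate $\|\chi(u)\|_{\sigma,q,p}=\sup_{e'\in U_{p}^{\circ}}q(T^{-1}(e'\circ u))\leq C\sup_{y\in B}p(u(y))$ obtained from the continuity of $T^{-1}\colon Y_{b}'\to\F$ together with the polar identity $\sup_{e'\in U_{p}^{\circ}}|e'(x)|=p(x)$, injectivity from the density of the span of $\delta(\Omega)$ in $Y$, and part (b) from \prettyref{rem:BC_space} plus the observation that the isometry hypothesis turns the inequality into an equality with $C=1$ and $B=B_{\|\cdot\|_{Y}}$. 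This matches the paper's argument in all essentials.
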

\begin{proof}
(a) We note that $e'\circ u\in Y'$ for all $e'\in E'$ and $u\in L(Y,E)$. 
Therefore $e'\circ \chi(u)=(e'\circ u)\circ\delta\in\F$ for all 
$e'\in E'$ and $u\in L(Y,E)$, which yields that $\chi(u)\in\FE_{\sigma}$. 

Let $q\in\Gamma_{\F}$ and  $p\in\Gamma_{E}$ where $\Gamma_{\F}$ is a fundamental system of seminorms of $\F$. 
For all $u\in L(Y,E)$ and $e'\in E'$ there is $f_{e'\circ u}\in\F$ such that $e'\circ u=T(f_{e'\circ u})$ and 
$T(f_{e'\circ u})\circ\delta=f_{e'\circ u}$ because $(\delta,Y,T)$ is a linearisation of $\F$ and $e'\circ u\in Y'$.
Due to the continuity of $T^{-1}\colon Y_{b}'\to \F$ there are $C\geq 0$ and a bounded set $B\subset Y$ such that
\begin{align}\label{eq:isometry_vector_valued}
  \|\chi(u)\|_{\sigma,q,p}
&=\sup_{e'\in U_{p}^{\circ}}q(e'\circ u\circ\delta)
 =\sup_{e'\in U_{p}^{\circ}}q(T(f_{e'\circ u})\circ\delta)
 =\sup_{e'\in U_{p}^{\circ}}q(f_{e'\circ u})\nonumber\\
&=\sup_{e'\in U_{p}^{\circ}}q(T^{-1}(T(f_{e'\circ u})))
 \leq C \sup_{e'\in U_{p}^{\circ}}\sup_{y\in B}|T(f_{e'\circ u})(y)|\\
&=C \sup_{e'\in U_{p}^{\circ}}\sup_{y\in B}|(e'\circ u)(y)|
 =C \sup_{y\in B}p(u(y))<\infty\nonumber
\end{align}
for all $u\in L(Y,E)$ where we used \cite[Proposition 22.14, p.~256]{meisevogt1997} for the last equation. 
Therefore $\chi(u)\in\FE_{\sigma,b}$, which means that the map $\chi$ is well-defined. The map $\chi$ is also linear and 
thus the estimate above implies that it is continuous. The map $\chi$ is also injective as the span of 
$\{\delta(x)\;|\;x\in\Omega\}$ is dense in $Y$ by \cite[Proposition 2.7, p.~1596]{kruse2023a}. 

(b) We have $\FE_{\sigma}=\FE_{\sigma,b}$ by \prettyref{rem:BC_space} since Banach spaces are webbed and so BC-spaces. 
The rest of part (b) follows from part (a) since we have equality in \eqref{eq:isometry_vector_valued} with $q=\|\cdot\|$, $C=1$ and 
$B=B_{\|\cdot\|_{Y}}$.
\end{proof}

\begin{rem}\label{rem:linearisation_isometry_Banach_valued}
Let the assumptions of \prettyref{prop:linearisation_into} (b) be fulfilled. 
If $(E,\|\cdot\|_{E})$ is also a normed space and $L(Y,E)$ equipped with operator norm given by
$\|u\|_{L(Y,E)}\coloneqq\sup\{\|u(y)\|_{E}\;|\;y\in Y,\,\|y\|_{Y}\leq 1\}$, $u\in L(Y,E)$, then the map $\chi$ is an isometry 
by choosing $\Gamma_{E}\coloneqq \{\|\cdot\|_{E}\}$. 
\end{rem}

Next, we prove the surjectivity of the map $\chi$ if $E$ is complete. The proof is quite similar to the one given in 
\cite[Theorem 14 (i), p.~1524]{kruse2017}. We identify $E$ with a linear subspace of the algebraic dual $E'^{\star}$ of $E'$
by the canonical linear injection $x\longmapsto [e'\mapsto e'(x)]\eqqcolon\langle x, e'\rangle$.

\begin{thm}\label{thm:linearisation_full}
Let $\F$ be a locally convex Hausdorff space of $\K$-valued functions on a non-empty set $\Omega$, 
$(\delta,Y,T)$ a strong linearisation of $\F$ such that $Y$ is quasi-barrelled and $E$ a complete locally convex Hausdorff space 
over the field $\K$. Then the map 
\[
\chi\colon L_{b}(Y,E)\to\FE_{\sigma,b},\;\chi(u)\coloneqq u\circ \delta,
\]
is a topological isomorphism and its inverse fulfils
\[
\langle \chi^{-1}(f)(y),e'\rangle=T(e'\circ f)(y),\quad f\in \FE_{\sigma,b},\,y\in Y,\,e'\in E'.
\]
In particular, $\FE_{\sigma,b}$ is quasi-complete. If $Y$ is even bornological, then the space $\FE_{\sigma,b}$ is complete.
\end{thm}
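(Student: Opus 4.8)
The plan is to establish that $\chi$ is a topological isomorphism by showing it is surjective (injectivity and continuity being already provided by \prettyref{prop:linearisation_into} (a)), and then to verify the claimed formula for $\chi^{-1}$ and deduce the completeness assertions. First I would fix $f\in\FE_{\sigma,b}$ and seek a candidate $u=\chi^{-1}(f)\in L(Y,E)$ whose defining relation must be $\langle u(y),e'\rangle=T(e'\circ f)(y)$ for all $y\in Y$, $e'\in E'$; this is forced because $\langle(u\circ\delta)(x),e'\rangle=\langle u(\delta(x)),e'\rangle$ should recover $(e'\circ f)(x)=T(e'\circ f)(\delta(x))$, and the span of $\{\delta(x)\}$ is dense in $Y$. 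So for each fixed $y\in Y$, I would define the linear functional $\Lambda_{y}\colon E'\to\K$, $e'\mapsto T(e'\circ f)(y)$, and the crux is to show that $\Lambda_{y}$ actually lies in $E\subset E'^{\star}$, i.e.\ that it is represented by an element $u(y)\in E$.

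The main obstacle is precisely this representation step: realising $\Lambda_{y}$ as an element of the complete space $E$. The standard device (mirroring \cite[Theorem 14 (i), p.~1524]{kruse2017}) is to show $\Lambda_{y}$ is continuous on $(E',\sigma(E',E))$ restricted to the equicontinuous sets, equivalently that it is weak-$\ast$ continuous on each polar $U_{p}^{\circ}$; by completeness of $E$ and the Grothendieck completeness theorem (or the Mackey--Arens description of $E$ as the $\sigma(E'^{\star},E')$-dense subspace whose elements are the functionals continuous on equicontinuous sets), such a functional is then represented by a unique point of $E$. To get this continuity I would use the finiteness $\|f\|_{\sigma,q,p}=\sup_{e'\in U_{p}^{\circ}}q(e'\circ f)<\infty$: for a net $e'_{\iota}\to e'$ in $\sigma(E',E)$ within $U_{p}^{\circ}$ one has $e'_{\iota}\circ f\to e'\circ f$ pointwise on $\Omega$, hence (since the $e'_{\iota}\circ f$ stay in a bounded, thus by quasi-barrelledness equicontinuous, subset of $\F$) in $\F$, so $T(e'_{\iota}\circ f)(y)\to T(e'\circ f)(y)$ by continuity of $T$ and of evaluation at $y$. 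Here quasi-barrelledness of $Y$ is what makes $\beta(Y',Y)$-bounded sets equicontinuous and lets the topological isomorphism $T$ transport the estimate cleanly.

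Once $u(y)\in E$ is produced for each $y$, linearity of $y\mapsto u(y)$ is immediate from the linearity of $T$ and of evaluation, and continuity of $u\colon Y\to E$ follows from the bound in \eqref{eq:isometry_vector_valued}, which gives $p(u(y))=\sup_{e'\in U_{p}^{\circ}}|\langle u(y),e'\rangle|=\sup_{e'\in U_{p}^{\circ}}|T(e'\circ f)(y)|\leq C\sup_{e'\in U_{p}^{\circ}}q(e'\circ f)\cdot(\text{a continuous seminorm in }y)$; more directly, $u$ continuous follows because $e'\circ u=T(e'\circ f)\circ$ recovers a $\sigma(Y',Y)$-bounded family that is equicontinuous. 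Then $\chi(u)=u\circ\delta=f$ since $\langle u(\delta(x)),e'\rangle=T(e'\circ f)(\delta(x))=(e'\circ f)(x)=\langle f(x),e'\rangle$ for all $e'$, so surjectivity holds and the $\chi^{-1}$ formula is exactly the one asserted. Continuity of $\chi^{-1}$ I would obtain either from the open mapping theorem, or by directly estimating the operator seminorms of $u$ against $\|f\|_{\sigma,q,p}$ using the displayed bound, so that $\chi$ is a topological isomorphism.

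For the final assertions I would transport completeness properties across $\chi$: since $E$ is complete and $Y$ quasi-barrelled, $L_{b}(Y,E)$ is quasi-complete by the standard result that the space of continuous linear maps into a quasi-complete space, with the topology of uniform convergence on bounded sets, is quasi-complete when the domain is quasi-barrelled (equivalently, bounded sets are equicontinuous). Hence $\FE_{\sigma,b}\cong L_{b}(Y,E)$ is quasi-complete. If moreover $Y$ is bornological, then every locally bounded linear map on $Y$ is continuous and $L_{b}(Y,E)$ is genuinely complete (not merely quasi-complete) for complete $E$, so the topological isomorphism $\chi$ forces $\FE_{\sigma,b}$ to be complete as well.
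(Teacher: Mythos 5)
Your proposal is correct in substance, but the crucial representation step is handled by a genuinely different mechanism than the paper's. You realise the functional $\Lambda_{y}\colon e'\mapsto T(e'\circ f)(y)$ as a point of $E$ via Grothendieck's completeness theorem: the set $\{T(e'\circ f)\;|\;e'\in U_{p}^{\circ}\}$ is $\beta(Y',Y)$-bounded, hence equicontinuous by quasi-barrelledness of $Y$, and on an equicontinuous set the weak-$\ast$ topology agrees with the topology of pointwise convergence on the total set $\{\delta(x)\;|\;x\in\Omega\}$; this gives $\sigma(E',E)$-continuity of $\Lambda_{y}$ on each $U_{p}^{\circ}$ and hence $\Lambda_{y}\in E$ by completeness of $E$. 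The paper instead never leaves $E$: it observes that $\Psi(f)(\delta(x))=f(x)\in E$, approximates an arbitrary $y$ by a net $y_{\iota}$ from the dense span of the $\delta(x)$, and uses the uniform bound $|\Psi(f)(y)|_{U_{p}^{\circ}}\leq\widetilde{C}\widetilde{q}(y)$ (the same equicontinuity input, packaged as continuity of $\mathcal{J}_{Y}\colon Y\to(Y_{b}')_{b}'$) to see that $(\Psi(f)(y_{\iota}))_{\iota}$ is a Cauchy net in $E$ converging to $\Psi(f)(y)$. Both arguments spend the completeness of $E$ and the quasi-barrelledness of $Y$ at exactly the same point; yours is the more classical duality-theoretic packaging, the paper's is more elementary and self-contained. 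Everything else --- injectivity and continuity of $\chi$ from \prettyref{prop:linearisation_into} (a), the formula for $\chi^{-1}$, the direct seminorm estimate for the continuity of $\chi^{-1}$, and the (quasi-)completeness of $L_{b}(Y,E)$ --- coincides with the paper.

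Two caveats. First, in your continuity argument the clause ``hence \dots in $\F$'' overstates what you obtain: pointwise convergence on $\Omega$ of a bounded net $e'_{\iota}\circ f$ does not give convergence in the topology of $\F$ (consider $z^{n}$ in $\mathcal{H}^{\infty}(\D)$); what equicontinuity of $T(R_{f}(U_{p}^{\circ}))$ actually yields is $\sigma(Y',Y)$-convergence of $T(e'_{\iota}\circ f)$ to $T(e'\circ f)$, which is precisely --- and only --- what you need to conclude $\Lambda_{y}(e'_{\iota})\to\Lambda_{y}(e')$, so the gap is one of phrasing rather than substance. Second, an open mapping theorem is not available for $\chi$ in this generality (neither space need be Fr\'echet or webbed a priori), so of your two alternatives for the continuity of $\chi^{-1}$ only the direct estimate $\sup_{y\in B}p(\Psi(f)(y))\leq C\|f\|_{\sigma,q,p}$, which is what the paper uses, is viable.
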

\begin{proof}
Due to \prettyref{prop:linearisation_into} (a) we only need to show that $\chi$ is surjective and its inverse continuous. 
Fix $f\in \FE_{\sigma,b}$. For all $y\in Y$ the map $\Psi(f)(y)\colon E'\to \K$, 
$\langle\Psi(f)(y),e'\rangle\coloneqq T(e'\circ f)(y)$, is clearly linear, thus $\Psi(f)(y)\in E'^{\star}$. 
Let $\Gamma_{E}$ denote a fundamental system of seminorms of $E$. We set for $p\in\Gamma_{E}$
\[
|z|_{U_{p}^{\circ}}\coloneqq \sup_{e'\in U_{p}^{\circ}}|z(e')|\leq\infty,\quad z\in {E'}^{\star},
\]
and note that $p(x)=|\langle\cdot,x\rangle|_{U_{p}^{\circ}}$ for every $x\in E$. Further, we define 
$R_{f}(e')\coloneqq e'\circ f$ for $e'\in E'$ and denote by $\Gamma_{Y}$ a fundamental system of seminorms of $Y$. 
We observe that $R_{f}(U_{p}^{\circ})$ is a bounded set in $\F$ and that 
there are $\widetilde{C}\geq 0$ and $\widetilde{q}\in\Gamma_{Y}$ such that 
\begin{equation}\label{eq:linearisation}
  |\Psi(f)(y)|_{U_{p}^{\circ}}
= \sup_{h\in R_{f}(U_{p}^{\circ})}|T(h)(y)|
= \sup_{h\in R_{f}(U_{p}^{\circ})}|\mathcal{J}_{Y}(y)(T(h))|
\leq \widetilde{C}\widetilde{q}(y)
\end{equation}
for all $y\in Y$ since $Y$ is a quasi-barrelled space and thus the canonical evaluation map 
$\mathcal{J}_{Y}\colon Y\to (Y_{b}')_{b}'$ continuous by \cite[11.2.2. Proposition, p.~222]{jarchow1981}.
Next, we show that $\Psi(f)(y)\in E$ for all $y\in Y$. First, we remark that 
\[
\langle\Psi(f)(\delta(x)),e'\rangle=T(e'\circ f)(\delta(x))=(e'\circ f)(x)=\langle f(x),e'\rangle,\quad x\in\Omega,
\]
for every $e'\in E'$, yielding $\Psi(f)(\delta(x))\in E$ for every $x\in\Omega$. 
By \cite[Proposition 2.7, p.~1596]{kruse2023a} the span of $\{\delta(x)\;|\;x\in\Omega\}$ is dense 
in $Y$. Thus for every $y\in Y$ there is a net $(y_{\iota})_{\iota\in I}$ in this span such that it converges to $y$ and 
$\Psi(f)(y_{\iota})\in E$ for each $\iota\in I$. For every $p\in\Gamma_{E}$ we have
\[
     |\Psi(f)(y_{\iota})-\Psi(f)(y)|_{U_{p}^{\circ}}
\underset{\eqref{eq:linearisation}}{\leq} \widetilde{C}\widetilde{q}(y_{\iota}-y)
\to 0.
\]
Hence $(\Psi(f)(y_{\iota}))_{\iota\in I}$ is a Cauchy net in the complete space $E$ with a limit 
$g\in E$. For every $p\in\Gamma_{E}$ we get
\begin{align*}
 |g-\Psi(f)(y)|_{U_{p}^{\circ}}
 &\leq  |g-\Psi(f)(y_{\iota})|_{U_{p}^{\circ}} + |\Psi(f)(y_{\iota})-\Psi(f)(y)|_{U_{p}^{\circ}}\\
 &\underset{\mathclap{\eqref{eq:linearisation}}}{\leq}  |g-\Psi(f)(y_{\iota})|_{U_{p}^{\circ}} 
  + \widetilde{C}\widetilde{q}(y_{\iota}-y)
\end{align*}
and deduce that $\Psi(f)(y)=g\in E$. In combination with \eqref{eq:linearisation} we derive that 
$\Psi(f)\in L(Y,E)$. Finally, we note that 
\[
\chi(\Psi(f))(x)=\Psi(f)(\delta(x))=f(x),\quad x\in\Omega,
\]
implying the surjectivity of $\chi$. 

Let $\Gamma_{\F}$ denote a fundamental system of seminorms of $\F$. 
Let $B\subset Y$ be bounded and $p\in\Gamma_{E}$. Due to the continuity of $T\colon\F\to Y_{b}'$ there are $C\geq 0$ and 
$q\in\Gamma_{\F}$ such that 
\begin{align*}
 \sup_{y\in B}p(\Psi(f)(y))
&=\sup_{y\in B}\sup_{e'\in U_{p}^{\circ}}|e'(\Psi(f)(y))|
 =\sup_{e'\in U_{p}^{\circ}}\sup_{y\in B}|T(e'\circ f)(y)|
 \leq C \sup_{e'\in U_{p}^{\circ}}q(e'\circ f)\\
&=C\|f\|_{\sigma,q,p},
\end{align*}
where we used \cite[Proposition 22.14, p.~256]{meisevogt1997} for the first equation, which implies the continuity of $\Psi=\chi^{-1}$. 

The space $L_{b}(Y,E)$ is quasi-complete, thus $\FE_{\sigma,b}$ as well, by \cite[\S 39, 6.(5), p.~144]{koethe1979} since $Y$ is 
quasi-barrelled and $E$ complete. If $Y$ is even bornological, then $L_{b}(Y,E)$ is complete by \cite[\S 39, 6.(4), p.~143]{koethe1979} and hence $\FE_{\sigma,b}$ as well.
\end{proof}

Due to weak-strong principles for holomorphic, harmonic and Lipschitz continuous functions special cases 
of \prettyref{thm:linearisation_full} and \prettyref{rem:linearisation_isometry_Banach_valued} for Banach spaces $E$ over $\C$ 
were already obtained before, for instance in 
\cite[Theorem 2.1, p.~869]{mujica1991} for the Banach space $\F=\mathcal{H}^{\infty}(\Omega)$ of bounded holomorphic functions on 
an open subset $\Omega$ of a Banach space, in \cite[Lemma 10, p.~243]{bonet2001} for a Banach space $\mathcal{F}(\D)$ 
of holomorphic functions on the open unit disc $\D\subset\C$ such that its closed unit ball is $\tau_{\operatorname{co}}$-compact, 
in \cite[Lemma 5.2, p.~14]{laitila2006} for a Banach space $\mathcal{F}(\D)$ of harmonic functions on $\D$ such that its 
closed unit ball is $\tau_{\operatorname{co}}$-compact, in \cite[Proposition 6, p.~3]{jorda2013}, 
which also covers \cite[Theorem 3.1 (Linearization Theorem), p.~128]{gupta2016} 
and \cite[Proposition 2.3 (c), p.~3029]{aron2024}, for a closed subspace $\F$ of the Banach space $\mathcal{H}v(\Omega)$ such that 
its closed unit ball is $\tau_{\operatorname{co}}$-compact where 
$\Omega$ is an open connected subset of a Banach space and $v\colon\Omega\to (0,\infty)$ a continuous function, 
and in \cite[Theorem 1, p.~239--240]{galindo1992} where $\F=\mathcal{H}_{b}(\Omega)$ is the space of holomorphic functions
defined on a balanced open subset $\Omega$ of a complex normed space which are bounded on $\Omega$-bounded sets. 
$\mathcal{H}_{b}(\Omega)$ is a Fr\'echet space if equipped with the topology of uniform convergence on $\Omega$-bounded sets, and 
we note that its predual $Y\coloneqq\mathfrak{P}_{b}(\Omega)$ in \cite[p.~238--239]{galindo1992} 
is an inductive limit of a sequence of Banach spaces and thus ultrabornological, in particular barrelled, by construction. 
For Banach spaces $E$ over $\C$ \prettyref{thm:linearisation_full} and \prettyref{rem:linearisation_isometry_Banach_valued} 
also generalise \cite[Lemma 2.3.1, p.~67]{beltran2014} where $(\delta,Y,T)\coloneqq (\Delta,G,J)$ is a strong Banach linearisation of a 
Banach space $\F$ of $\C$-valued functions on a non-empty set $\Omega$ with Banach predual $(G,J)$ such that 
$\delta_{x}\in G$ for all $x\in\Omega$, as well as \cite[Corollary 2.3.4, p.~68]{beltran2014} where 
$\F=\mathcal{HV}(\Omega)$ is the weighted Fr\'echet space of holomorphic functions on a complex Banach space $\Omega$ 
and $\mathcal{V}\coloneqq(v_{n})_{n\in\N}$ an increasing sequence of continuous functions $v_{n}\colon\Omega\to(0,\infty)$. 
Further, \prettyref{thm:linearisation_full} for Banach spaces $E$ over $\C$ also covers \cite[Theorem 2, p.~283]{beltran2012} 
where $\F=\mathcal{VH}(\Omega)$ is the inductive limit of the weighted Banach spaces $\mathcal{H}v_{n}(\Omega)$ 
of holomorphic functions on a complex Banach space $\Omega$ and $\mathcal{V}\coloneqq(v_{n})_{n\in\N}$ 
a decreasing sequence of continuous functions $v_{n}\colon\Omega\to(0,\infty)$ 
if $\mathcal{VH}(\Omega)$ satisfies $(\operatorname{CNC)}$ for $\tau_{\operatorname{co}}$ or 
$(\mathcal{VH}(\Omega)_{\mathcal{B},\tau_{\operatorname{co}}},\beta)$ is distinguished by \cite[Corollary 4.7, p.~1610--1611]{kruse2023a}. 
For complete locally convex Hausdorff spaces $E$ over $\C$ \prettyref{thm:linearisation_full} also generalises
\cite[3.7 Proposition, p.~292]{bierstedt1993} combined with \cite[1.5 Theorem (e), p.~277--278]{bierstedt1993} 
where $\F=\mathcal{HV}(\Omega)$ is the weighted space of holomorphic functions on 
a balanced open subset of $\Omega\subset\C^{d}$ and $\mathcal{V}$ a family of radial upper semicontinuous functions 
$v\colon\Omega\to [0,\infty)$ such that the weighted topology $\tau_{\mathcal{V}}$ induced by $\mathcal{V}$ fulfils 
$\tau_{\operatorname{co}}\leq\tau_{\mathcal{V}}$ (see \cite[p.~272, 274]{bierstedt1993}), $\mathcal{HV}(\Omega)$ is bornological 
and the polynomials are contained in 
$\mathcal{HV}_{0}(\Omega)\coloneqq\{f\in\mathcal{HV}(\Omega)\;|\;\forall\;v\in \mathcal{V}:\;fv\;\text{vanishes at}\;\infty\}$. 
However, we note that in contrast to \prettyref{thm:linearisation_full} quasi-complete $E$ are allowed in 
\cite[3.7 Proposition, p.~292]{bierstedt1993}. 
\prettyref{thm:linearisation_full} for complete locally convex Hausdorff spaces $E$ over $\C$ also improves 
\cite[Theorem 3.3, p.~35]{bonet2002} where $\F=\mathcal{HV}(\Omega)$ 
is bornological, $\Omega$ an open connected subset of $\C^{d}$ and $\mathcal{V}$ a point-detecting Nachbin family of 
continuous non-negative functions on $\Omega$. 
If $E$ is a complete locally convex Hausdorff space and $\F=\mathscr{L}(F\times G,\K)$ the space of continuous bilinear 
forms on the product $\Omega\coloneqq F\times G$ of two locally convex Hausdorff spaces $F$ and $G$, then 
\prettyref{thm:linearisation_full} also covers \cite[Chap.~I, \S1, n$^\circ$1, Proposition 2, p.~30--31]{grothendieck1966} 
(cf.~\cite[15.1.2 Theorem, p.~325]{jarchow1981}) if the projective tensor product $Y\coloneqq F \otimes_{\pi} G$ 
is quasi-barrelled (instead of $F\otimes_{\pi} G$ one may also use its completion $F\widehat{\otimes}_{\pi} G$ 
by \cite[3.4.2 Theorem, p.~61--62]{jarchow1981}). 
For instance, $F\otimes_{\pi} G$ is (quasi-)barrelled (and $F\widehat{\otimes}_{\pi} G$ barrelled by 
\cite[11.3.1 Proposition (e), p.~223]{jarchow1981}) if $F$ and $G$ are metrisable and barrelled by 
\cite[15.6.6 Proposition, p.~337]{jarchow1981}, or if $F$ and $G$ are quasi-barrelled DF-spaces by 
\cite[15.6.8 Proposition, p.~338]{jarchow1981}. 
However, we emphasize that \cite[Chap.~I, \S1, n$^\circ$1, Proposition 2, p.~30--31]{grothendieck1966} 
in contrast to \prettyref{thm:linearisation_full} does not need the restrictions that $E$ is complete and $F \otimes_{\pi} G$ 
quasi-barrelled. 
Moreover, \prettyref{thm:linearisation_full} also complements \cite[Theorem 3, p.~690]{carando2004} where a special 
continuous linearisation $(e,\mathcal{F}_{\ast}(\Omega),L)$ of a linear space $\F$ of scalar-valued continuous functions on a 
non-empty topological Hausdorff space $\Omega$ instead of a strong linearisation is considered 
and it is shown that the map $L(\mathcal{F}_{\ast}(\Omega),E)\to \FE_{\sigma}\cap\mathcal{C}(\Omega,E)$, $u\mapsto u\circ e$, 
is an algebraic isomorphism if $E$ is a complete locally convex Hausdorff space. 
Here, $\mathcal{C}(\Omega,E)$ denotes the space of continuous maps from $\Omega$ to $E$. We refer the reader to 
\cite[p.~182--184]{jaramillo2009} for a summary of the construction of $(e,\mathcal{F}_{\ast}(\Omega),L)$ .

\begin{rem}\label{rem:linearisation_predual}
Let $\F$ be a locally convex Hausdorff space of $\K$-valued functions on a non-empty set $\Omega$, 
$Y$ a quasi-barrelled locally convex Hausdorff space over the field $\K$ and $(\delta,Y,T)$ a strong linearisation of $\F$.
For any complete locally convex Hausdorff space $E$ over the field $\K$ we have $u\circ\delta\in\FE_{\sigma,b}$ for all 
$u\in L_{b}(Y,E)$ and there is a topological isomorphism $T_{E}\colon\FE_{\sigma,b}\to L_{b}(Y,E)$, 
namely $T_{E}\coloneqq\chi^{-1}=\Psi$, with $T_{E}(f)\circ \delta=f$ for all $f\in\FE_{\sigma,b}$ 
by \prettyref{thm:linearisation_full}. 
Looking at \prettyref{defn:linearisation}, we may consider $(\delta,L_{b}(Y,E),T_{E})$ as a strong linearisation of $\FE_{\sigma,b}$. 
In particular, the following diagram commutes:
\[
\xymatrix{
\Omega \ar[d]_{\delta} \ar[r]^{f} &  E  \\
Y \ar[ur]_{T_{E}(f)} &
}
\]
If $Y$ is complete, then $Y$ is also a \emph{free object generated by $\Omega$ 
in the category} $\mathcal{C}_{2}$ of complete locally convex Hausdorff spaces with continuous linear operators as morphisms 
in the sense of \cite[p.~100]{garcia2024} once the spaces $\Omega$ belong to a category $\mathcal{C}_{1}$ 
such that $\delta$ and any $f\in\FE_{\sigma,b}$ is a morphism of $\mathcal{C}_{1}$ 
and there is a ``forgetful'' functor from $\mathcal{C}_{2}$ to $\mathcal{C}_{1}$.
\end{rem}

Next, we generalise the extension result \cite[Theorem 10, p.~5]{jorda2013}. 
For this purpose we need to recall some definitions.
Let $E$ be a locally convex Hausdorff space. A linear subspace $G\subset E'$ is said to \emph{determine boundedness} of $E$
if every $\sigma(E,G)$-bounded set $B\subset E$ is already bounded in $E$ (see \cite[p.~231]{bonet2007}). In particular, such 
a $G$ is $\sigma(E',E)$-dense in $E'$. Further, by Mackey's theorem $G\coloneqq E'$ determines boundedness. 
Another example is the following one. 

\begin{rem}
Let $(E,\tau)$ be a bornological locally convex Hausdorff space, $\mathcal{B}$ the family of $\tau$-bounded sets, 
$\widetilde{\tau}$ a locally convex Hausdorff topology on $E$ and $\widetilde{\gamma}$ denote the finest locally convex Hausdorff topology on $E$ which coincides with $\widetilde{\tau}$ on $\tau$-bounded sets. Suppose that a subset of $E$ is $\tau$-bounded if and only if it is $\widetilde{\gamma}$-bounded (see \cite[Definition 3.12, p.~1600]{kruse2023a}). 
Then $(E,\widetilde{\gamma})'$ determines boundedness of $(E,\tau)$. 
Indeed, $(E,\widetilde{\gamma})'\subset (E,\tau)'$ by \cite[Remark 3.13 (c), p.~1600--1601]{kruse2023a}. 
Let $B\subset E$ be $\sigma(E,(E,\widetilde{\gamma})')$-bounded. Then $B$ is $\widetilde{\gamma}$-bounded by 
Mackey's theorem. It follows that $B$ is $\tau$-bounded by assumption.
\end{rem}

Moreover, let $(\F,\tau)$ be a locally convex Hausdorff space of $\K$-valued functions on a non-empty set $\Omega$. 
A set $U\subset\Omega$ is called a \emph{set of uniqueness} 
for $\F$ if for each $f\in\F$ the validity of $f(x)=0$ for all $x\in U$ implies $f=0$ on $\Omega$ 
(see \cite[p.~3]{jorda2013}). If $(\F,\tau)$ is bornological and satisfies $\operatorname{(BBC)}$ and $\operatorname{(CNC)}$ for 
some $\widetilde{\tau}$ such that $\tau_{\operatorname{p}}\leq\widetilde{\tau}$, 
then $U\subset\Omega$ is a set of uniqueness for $\F$ if and only if the span of 
$\{\delta_{x}\;|\;x\in U\}$ is $\sigma(\F_{\mathcal{B},\widetilde{\tau}}',\F)$-dense. 
For instance, a sequence $U\coloneqq(z_{n})_{n\in\N}\subset\D$ of distinct elements is a set of uniqueness for 
$\mathcal{F}(\D)=\mathcal{H}^{\infty}(\D)$ if and only if it satisfies the Blaschke condition $\sum_{n\in\N}(1-|z_{n}|)=\infty$ 
(see e.g.~\cite[15.23 Theorem, p.~303]{rudin1970}). Further examples of sets of uniqueness for the spaces 
$\mathcal{C}_{P}v(\Omega)$ are given in \cite[p.~102--103]{kruse2023}. 
Now, we only need to adapt the proof of \cite[Theorem 10, p.~5]{jorda2013} a bit to get the following theorem. 

\begin{thm}\label{thm:extension}
Let $(\F,\tau)$ be a complete bornological DF-space of $\K$-valued functions on a non-empty set $\Omega$ 
satisfying $\operatorname{(BBC)}$ and $\operatorname{(CNC)}$ for some $\widetilde{\tau}$ such that 
$\tau_{\operatorname{p}}\leq\widetilde{\tau}$, $U\subset\Omega$ a set of uniqueness for $\F$, 
$E$ a complete locally convex Hausdorff space 
and $G\subset E'$ determine boundedness. If $f\colon U\to E$ is a function such that $e'\circ f$ admits an extension 
$f_{e'}\in\F$ for each $e'\in G$, then there exists a unique extension $F\in\FE_{\sigma}$ of $f$.
\end{thm}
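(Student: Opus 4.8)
The plan is to build the extension directly from the canonical strong Fréchet linearisation of $\F$, transferring the scalar extension hypothesis from $G$ to all of $E'$ and then realising the resulting pointwise values inside $E$ by completeness. Since $(\F,\tau)$ is a complete bornological DF-space satisfying $\operatorname{(BBC)}$ and $\operatorname{(CNC)}$ for $\widetilde{\tau}$, the space $Y\coloneqq\F_{\mathcal{B},\widetilde{\tau}}'$ is a Fréchet space by \prettyref{prop:predual_complete} (b), hence bornological and barrelled, and $(\Delta,Y,\mathcal{I})$ with $\Delta(x)=\delta_{x}$ is a strong linearisation of $\F$ as in \prettyref{cor:scb_linearisation}; because $\tau_{\operatorname{p}}\leq\widetilde{\tau}$ the point evaluations are $\widetilde{\tau}$-continuous, so indeed $\delta_{x}\in(\F,\widetilde{\tau})'\subseteq Y$ and $\delta_{x}\in\F'$. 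As a complete bornological DF-space $\F$ is webbed, hence a BC-space, so $\FE_{\sigma}=\FE_{\sigma,b}$ by \prettyref{rem:BC_space}, and \prettyref{thm:linearisation_full} identifies $\FE_{\sigma,b}$ with $L_{b}(Y,E)$ via $u\mapsto u\circ\Delta$. It therefore suffices to construct $F\in\FE_{\sigma}$ with $F(x)=f(x)$ for $x\in U$. Uniqueness is immediate: if $F_{1},F_{2}\in\FE_{\sigma}$ both extend $f$, then for each $e'\in E'$ the function $e'\circ(F_{1}-F_{2})\in\F$ vanishes on $U$, hence on $\Omega$ as $U$ is a set of uniqueness, so $F_{1}=F_{2}$.

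Next I would set up the candidate on a dense subspace. Since $U$ is a set of uniqueness and $\tau_{\operatorname{p}}\leq\widetilde{\tau}$, the span $Y_{0}\coloneqq\operatorname{span}\{\delta_{x}\mid x\in U\}$ is $\sigma(Y,\F)$-dense, hence dense, in $Y$ by the characterisation of sets of uniqueness recalled before the theorem. As $G$ is $\sigma(E',E)$-dense it separates points of $E$, so $u_{0}(\sum_{j}a_{j}\delta_{x_{j}})\coloneqq\sum_{j}a_{j}f(x_{j})$ is a well-defined linear map $u_{0}\colon Y_{0}\to E$: whenever $\sum_{j}a_{j}\delta_{x_{j}}=0$ in $Y$ one has $e'(\sum_{j}a_{j}f(x_{j}))=\sum_{j}a_{j}f_{e'}(x_{j})=(\sum_{j}a_{j}\delta_{x_{j}})(f_{e'})=0$ for every $e'\in G$, whence $\sum_{j}a_{j}f(x_{j})=0$. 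In particular $u_{0}(\delta_{x})=f(x)$ for $x\in U$.

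The core step, which I expect to be the main obstacle, is to upgrade the scalar extension property from $G$ to all of $E'$ with uniform control. Fixing a continuous seminorm $p$ on $E$, I would first show that $A_{p}\coloneqq\{f_{e'}\mid e'\in U_{p}^{\circ}\cap G\}$ is bounded in $\F$; here the pointwise estimate $\lvert f_{e'}(x)\rvert=\lvert e'(f(x))\rvert\leq p(f(x))$ on $U$, the density of $Y_{0}$, and the barrelledness of $Y$ (so that $\sigma(\F,Y)$-bounded sets are bounded) have to be combined. Condition $\operatorname{(BBC)}$ then traps the $\widetilde{\tau}$-closure $\overline{A_{p}}$ in an absolutely convex $\tau$-bounded $\widetilde{\tau}$-compact subset of $\F$. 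Using that $G$ is $\sigma(E',E)$-dense and determines boundedness, I would realise every $e'\in E'$ as a $\sigma(E',E)$-limit of a net from $G$ whose images under $e''\mapsto f_{e''}$ remain in such a fixed $\widetilde{\tau}$-compact set; extracting a $\widetilde{\tau}$-convergent subnet and using $\tau_{\operatorname{p}}\leq\widetilde{\tau}$ together with uniqueness of the limit (set of uniqueness) yields a unique $f_{e'}\in\F$ extending $e'\circ f$, and shows that $e'\mapsto f_{e'}$ is $\sigma(E',E)$-to-$\widetilde{\tau}$ continuous on every polar $U_{p}^{\circ}$. Reconciling the approximating net with a single bounded family is precisely where the hypothesis that $G$ determines boundedness is essential.

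Finally I would realise the pointwise values in $E$. For $x\in\Omega$ the functional $e'\mapsto f_{e'}(x)=\delta_{x}(f_{e'})$ is the composition of the continuous map $e'\mapsto f_{e'}$ with the $\widetilde{\tau}$-continuous evaluation $\delta_{x}$, hence $\sigma(E',E)$-continuous on each equicontinuous set $U_{p}^{\circ}$. Since $E$ is complete, Grothendieck's completeness theorem \cite{jarchow1981} provides a unique $F(x)\in E$ with $e'(F(x))=f_{e'}(x)$ for all $e'\in E'$. Then $e'\circ F=f_{e'}\in\F$ for every $e'\in E'$, so $F\in\FE_{\sigma}=\FE_{\sigma,b}$, while $F(x)=f(x)$ for $x\in U$ because $f_{e'}(x)=e'(f(x))$ there; together with the uniqueness noted above this completes the argument.
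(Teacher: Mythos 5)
Your setup (the Fr\'echet predual $Y=\F_{\mathcal{B},\widetilde{\tau}}'$, the identification $\FE_{\sigma}=\FE_{\sigma,b}\cong L_{b}(Y,E)$, the densely defined map $u_{0}$ on $Y_{0}=\operatorname{span}\{\delta_{x}\mid x\in U\}$ with its well-definedness via the $\sigma(E',E)$-density of $G$, and the uniqueness argument) coincides with the paper's proof. But the step you yourself flag as the main obstacle contains a genuine gap, and it is not the route the paper takes. You want to show that $A_{p}=\{f_{e'}\mid e'\in U_{p}^{\circ}\cap G\}$ is bounded in $\F$ by combining the pointwise estimate $|f_{e'}(x)|\leq p(f(x))$ for $x\in U$ with the density of $Y_{0}$ and the barrelledness of $Y$. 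This does not work: the estimate only gives that the family $(\mathcal{I}(f_{e'}))_{e'}$ of continuous linear functionals on $Y$ is pointwise bounded on the dense subspace $Y_{0}$, and pointwise boundedness on a dense subspace does not imply pointwise boundedness on all of $Y$ (e.g.\ the functionals $n e_{n}'$ on $\ell^{2}$ are eventually zero on each finitely supported sequence but not pointwise bounded on $\ell^{2}$), so Banach--Steinhaus cannot be invoked. In fact the boundedness of $A_{p}$ is essentially equivalent to the conclusion $F\in\FE_{\sigma,b}$, so it cannot serve as a first step. Moreover, you invoke ``$G$ determines boundedness'' to control the family $(f_{e'})$ in $\F$ and to choose approximating nets in $E'$; that hypothesis is a statement about subsets of $E$ (weakly bounded w.r.t.\ $G$ implies bounded) and gives no such control. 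The subsequent net-extraction and Grothendieck-completeness steps therefore rest on an unestablished premise.

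The paper's proof arranges the quantifiers so that no uniform bound over $e'\in U_{p}^{\circ}$ is ever needed. For a $\beta$-bounded set $B\subset Y$ and a \emph{fixed} $e'\in G$ one has $|e'(A(x'))|=|x'(f_{e'})|\leq\sup_{x'\in B}|x'(f_{e'})|<\infty$ for $x'\in B\cap Y_{U}$, simply because singletons are bounded in $\F$; hence $A(B\cap Y_{U})$ is $\sigma(E,G)$-bounded, and it is here --- applied to a subset of $E$ --- that ``$G$ determines boundedness'' upgrades this to boundedness in $E$. Thus $A$ is a bounded linear map on the metrisable, hence bornological, subspace $(Y_{U},\beta_{\mid Y_{U}})$, so it is continuous, extends by density to a continuous $\widetilde{A}\colon\F_{\mathcal{B},\widetilde{\tau}}'\to E$ using the completeness of $E$, and $F\coloneqq\chi(\widetilde{A})=\widetilde{A}\circ\Delta\in\FE_{\sigma}$ is the desired extension. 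No extension of $e'\mapsto f_{e'}$ from $G$ to $E'$, no compactness from $\operatorname{(BBC)}$, and no Grothendieck completeness theorem are required. If you want to salvage your outline, replace the attempted boundedness of $A_{p}$ by the boundedness of the image sets $A(B\cap Y_{U})$ as above.
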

\begin{proof}
Let $Y_{U}$ denote the span of $\{\delta_{x}\;|\;x\in U\}$. Since $U$ is a set of uniqueness 
for $\F$ and $\tau_{\operatorname{p}}\leq\widetilde{\tau}$, $Y_{U}$ is $\sigma(\F_{\mathcal{B},\widetilde{\tau}}',\F)$-dense 
and thus also $\beta$-dense by \cite[8.2.5 Proposition, p.~149]{jarchow1981} as
$(\F_{\mathcal{B},\widetilde{\tau}}',\sigma(\F_{\mathcal{B},\widetilde{\tau}}',\F))'=\F
=(\F_{\mathcal{B},\widetilde{\tau}}',\beta)'$ (as linear spaces). 
We note that the linear map $A\colon Y_{U}\to E$ determined by $A(\delta_{x})\coloneqq f(x)$ for $x\in U$
is well-defined because $G$ is $\sigma(E',E)$-dense. Let $B\subset\F_{\mathcal{B},\widetilde{\tau}}'$ be $\beta$-bounded, 
$x'\in B\cap Y_{U}$ and $e'\in G$. Then $x'$ can be represented as $x'=\sum_{x\in U}a_{x}\delta_{x}$ with finitely many 
non-zero $a_{x}\in\K$ and
\[
 |e'(A(x'))|
=\Bigl|\sum_{x\in U}a_{x}e'(f(x))\Bigr|
=\Bigl|\sum_{x\in U}a_{x}f_{e'}(x)\Bigr|
=\Bigl|\Bigl(\sum_{x\in U}a_{x}\delta_{x}\Bigr)(f_{e'})\Bigr|
=|x'(f_{e'})|.
\]
Hence, by the $\beta$-boundedness of $B$ there is some $C\geq 0$ such that $|e'(A(x'))|\leq C$ for all $x'\in B\cap Y_{U}$. We deduce that the set 
$A(B\cap Y_{U})$ is $\sigma(E,G)$-bounded and thus bounded in $E$ because 
$G$ determines boundedness. We observe that $(\F_{\mathcal{B},\widetilde{\tau}}',\beta)$ is a Fr\'echet space 
by \prettyref{prop:predual_complete} (b) and so its linear subspace $Y_{U}$ is metrisable. 
We conclude that $A\colon (Y_{U},\beta_{\mid Y_{U}})\to E$ 
is continuous by \cite[Proposition 24.10, p.~282]{meisevogt1997} as the metrisable space 
$(Y_{U},\beta_{\mid Y_{U}})$ is bornological by 
\cite[Proposition 24.13, p.~283]{meisevogt1997}. Since $Y_{U}$ is 
$\beta$-dense in $\F_{\mathcal{B},\widetilde{\tau}}'$, there is a unique continuous linear 
extension $\widetilde{A}\colon \F_{\mathcal{B},\widetilde{\tau}}'\to E$ of $A$ by \cite[3.4.2 Theorem, p.~61--62]{jarchow1981}. 
Setting $F\coloneqq\chi\circ\widetilde{A}\in\FE_{\sigma}$ by \prettyref{cor:scb_linearisation} (a) 
and \prettyref{thm:linearisation_full}, we observe that 
\[
F(x)=(\chi\circ\widetilde{A})(x)=\widetilde{A}(\delta_{x})=A(\delta_{x})=f(x)
\]
for all $x\in U$, which proves the existence of 
the extension of $f$. The uniqueness of the extension follows from $U$ being a set 
of uniqueness for $\F$ and $G$ being $\sigma(E',E)$-dense.
\end{proof}

In particular, \prettyref{thm:extension} is applicable to the spaces $\F=\mathcal{VC}_{P}(\Omega)$ from 
\prettyref{ex:cont_strong_lin} (ii) by \cite[Corollary 4.7, p.~1610--1611]{kruse2023a}. 

\section{Equivalence and uniqueness of preduals}
\label{sect:equivalence_predual}

Having two strong linearisations $(\delta,Y,T)$ and $(\widetilde{\delta},Z,\widetilde{T})$ of a locally convex Hausdorff space $\F$ 
of $\K$-valued functions on a non-empty set $\Omega$, one might suspect that the preduals $(Y,T)$ and $(Z,\widetilde{T})$ 
are equivalent. This section is dedicated to deriving necessary and sufficient conditions for this to happen. Our candidate 
for the topological isomorphism $\lambda\colon Y\to Z$ such that $\lambda^{t}=T\circ \widetilde{T}^{-1}$ is the map 
$T_{Z}(\widetilde{\delta})$ with $T_{Z}$ from \prettyref{rem:linearisation_predual} for complete $Z$ if 
$\widetilde{\delta}\in\mathcal{F}(\Omega,Z)_{\sigma,b}$. 

\begin{prop}\label{prop:linearisation_scalar_valued}
Let $\F$ be a locally convex Hausdorff space of $\K$-valued functions on a non-empty set $\Omega$ and 
$(\delta,Y,T)$ a strong linearisation of $\F$ such that $Y$ is quasi-barrelled.
Let $(\widetilde{\delta},Z,\widetilde{T})$ be a linearisation of $\F$ such that $Z$ is complete and 
$\widetilde{\delta}\in\mathcal{F}(\Omega,Z)_{\sigma,b}$. 
Then the following assertions hold.
\begin{enumerate} 
\item[(a)] The map $T_{Z}(\widetilde{\delta})\colon Y\to Z$ is linear, continuous, injective and has dense range, 
$\widetilde{T}^{-1}\colon Z_{b}'\to \F$ is continuous and $T_{Z}(\widetilde{\delta})^{t}=T\circ \widetilde{T}^{-1}$.
\item[(b)] If 
\begin{enumerate}
\item[(i)] $Z_{b}'$ is webbed and $\F$ ultrabornological, or 
\item[(ii)] $Z_{b}'$ is $B_{r}$-complete and $\F$ barrelled, 
\end{enumerate}
then $\widetilde{T}\colon \F\to Z_{b}'$ is a topological isomorphism. 
\item[(c)] If $Y$ is complete, $Z$ barrelled and 
$\widetilde{T}\colon \F\to Z_{b}'$ continuous, then $T_{Z}(\widetilde{\delta})$ is surjective. 
\item[(d)] If $\widetilde{T}\colon \F\to Z_{b}'$ is continuous and 
\begin{enumerate}
\item[(i)] $Y$ is complete and webbed and $Z$ ultrabornological, or 
\item[(ii)] $Y$ is $B_{r}$-complete and $Z$ barrelled,
\end{enumerate}
then $T_{Z}(\widetilde{\delta})$ is a topological isomorphism.
\end{enumerate}
\end{prop}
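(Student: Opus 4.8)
The plan is to write $\lambda\coloneqq T_{Z}(\widetilde{\delta})$, which is a well-defined element of $L_{b}(Y,Z)$: indeed $\widetilde{\delta}\in\mathcal{F}(\Omega,Z)_{\sigma,b}$, $Z$ is complete and $Y$ is quasi-barrelled, so \prettyref{rem:linearisation_predual} and \prettyref{thm:linearisation_full} apply and yield the evaluation formula $\langle\lambda(y),z'\rangle=T(z'\circ\widetilde{\delta})(y)$ for all $y\in Y$ and $z'\in Z'$. Everything in (a) then hinges on the transpose identity: since $(\widetilde{\delta},Z,\widetilde{T})$ is a linearisation, $\widetilde{T}(f)\circ\widetilde{\delta}=f$ gives $z'\circ\widetilde{\delta}=\widetilde{T}^{-1}(z')$ for each $z'\in Z'$, so the evaluation formula becomes $\lambda^{t}(z')=z'\circ\lambda=T(z'\circ\widetilde{\delta})=(T\circ\widetilde{T}^{-1})(z')$, i.e.\ $\lambda^{t}=T\circ\widetilde{T}^{-1}$. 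From this I read off the rest of (a): $\lambda$ is linear and continuous as a member of $L_{b}(Y,Z)$; continuity of $\widetilde{T}^{-1}\colon Z_{b}'\to\F$ follows from $\widetilde{T}^{-1}=T^{-1}\circ\lambda^{t}$ together with the continuity of $\lambda^{t}\colon Z_{b}'\to Y_{b}'$ and of $T^{-1}\colon Y_{b}'\to\F$; and since $T$ and $\widetilde{T}^{-1}$ are bijective, $\lambda^{t}$ is bijective, which forces $\lambda$ to be injective and, by passing to polars and using the Hahn--Banach theorem in the Hausdorff spaces $Y$ and $Z$, to have dense range.

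For (b) the goal is to promote the continuous inverse $\widetilde{T}^{-1}$ from (a) to a continuous $\widetilde{T}$, after which bijectivity makes $\widetilde{T}$ a topological isomorphism. The graph of $\widetilde{T}$ is closed, being the image of the graph of the continuous map $\widetilde{T}^{-1}$ under the coordinate swap; I would then invoke the \cite[Closed graph theorem 24.31, p.~289]{meisevogt1997} of de Wilde in case (i), where $\F$ is ultrabornological and $Z_{b}'$ webbed, and the Pt\'{a}k--K\={o}mura--Adasch--Valdivia closed graph theorem \cite[\S34, 9.(7), p.~46]{koethe1979} in case (ii), where $\F$ is barrelled and $Z_{b}'$ is $B_{r}$-complete.

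Part (c) is the heart of the statement and the step I expect to be the main obstacle. Here the additional hypothesis that $\widetilde{T}$ is continuous makes $\widetilde{T}$ a topological isomorphism (its inverse is continuous by (a)), so $\lambda^{t}=T\circ\widetilde{T}^{-1}$ is a topological isomorphism of $Z_{b}'$ onto $Y_{b}'$, and hence $\lambda^{tt}\colon(Y_{b}')_{b}'\to(Z_{b}')_{b}'$ is one as well. The plan is to run the commuting diagram $\mathcal{J}_{Z}\circ\lambda=\lambda^{tt}\circ\mathcal{J}_{Y}$ of canonical evaluation maps. Since $Y$ is quasi-barrelled and complete, $\mathcal{J}_{Y}$ is a topological isomorphism into $(Y_{b}')_{b}'$ by \cite[11.2.2 Proposition, p.~222]{jarchow1981} whose range is complete, hence closed; therefore $\mathcal{J}_{Z}(\lambda(Y))=\lambda^{tt}(\mathcal{J}_{Y}(Y))$ is closed in $(Z_{b}')_{b}'$. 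As $Z$ is barrelled, $\mathcal{J}_{Z}$ is also a topological isomorphism into by the same proposition, so $\lambda(Y)$ is closed in $Z$; combined with the density from (a) this yields $\lambda(Y)=Z$, i.e.\ surjectivity. The delicate point is exactly this interplay: completeness is what closes $\mathcal{J}_{Y}(Y)$, while quasi-barrelledness of $Y$ and barrelledness of $Z$ are what make both evaluation maps embeddings, and only together do they transport closedness back down to $Z$.

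Finally, (d) follows by adding one closed graph argument to (c). In both cases the hypotheses of (c) hold, since $Y$ is complete (directly in (i), and in (ii) because $B_{r}$-complete spaces are complete) and $Z$ is barrelled (being ultrabornological in (i) and barrelled in (ii)); thus $\lambda$ is a continuous bijection by (a) and (c). It then suffices to show $\lambda^{-1}\colon Z\to Y$ is continuous, and its graph is closed as the swap of the closed graph of the continuous $\lambda$. De Wilde's closed graph theorem applies in case (i) with domain $Z$ ultrabornological and codomain $Y$ webbed, and the Pt\'{a}k-type theorem applies in case (ii) with domain $Z$ barrelled and codomain $Y$ being $B_{r}$-complete, giving continuity of $\lambda^{-1}$ and hence that $\lambda$ is a topological isomorphism.
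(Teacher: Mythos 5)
Your proposal is correct and follows essentially the same route as the paper's proof: the transpose identity $T_{Z}(\widetilde{\delta})^{t}=T\circ\widetilde{T}^{-1}$ via the evaluation formula of \prettyref{thm:linearisation_full} in (a), the classical closed-graph/open-mapping machinery in (b) and (d) (the paper applies the open mapping theorems to $\widetilde{T}^{-1}$ and to $T_{Z}(\widetilde{\delta})$ where you apply the dual closed graph theorems to $\widetilde{T}$ and to $T_{Z}(\widetilde{\delta})^{-1}$), and the bidual diagram $\mathcal{J}_{Z}\circ T_{Z}(\widetilde{\delta})=T_{Z}(\widetilde{\delta})^{tt}\circ\mathcal{J}_{Y}$ together with the embeddings $\mathcal{J}_{Y},\mathcal{J}_{Z}$ in (c). The only substantive variation is in (c), where the paper pulls a convergent net back through $(T_{Z}(\widetilde{\delta})^{tt})^{-1}$ and uses completeness of $Y$ to produce a preimage, whereas you push the closed subspace $\mathcal{J}_{Y}(Y)$ forward to conclude that the range is closed, and similarly in (a) you deduce dense range from injectivity of the transpose rather than from the density of $\operatorname{span}\{\widetilde{\delta}(x)\;|\;x\in\Omega\}$ in $Z$; both are valid executions of the same ideas.
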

\begin{proof}
(a) We note that $T_{Z}(\widetilde{\delta})$ is a continuous linear map by \prettyref{thm:linearisation_full} and thus 
$T_{Z}(\widetilde{\delta})^{t}\colon Z_{b}'\to Y_{b}'$ as well. We observe that
\begin{align*}
  T_{Z}(\widetilde{\delta})^{t}(z')(y)
&=\langle z',T_{Z}(\widetilde{\delta})(y)\rangle
 =\langle \chi^{-1}(\widetilde{\delta})(y),z'\rangle
 = T(z'\circ\widetilde{\delta})(y)\\
&= T(\widetilde{T}(\widetilde{T}^{-1}(z'))\circ \widetilde{\delta})(y)
 =T(\widetilde{T}^{-1}(z'))(y)
\end{align*}
for all $z'\in Z'$ and $y\in Y$ by \prettyref{thm:linearisation_full} and using that 
$T_{Z}(\widetilde{\delta})=\chi^{-1}(\widetilde{\delta})$. In particular, $T_{Z}(\widetilde{\delta})^{t}\colon Z_{b}'\to Y_{b}'$ 
is a continuous algebraic isomorphism. 
Since $\widetilde{T}^{-1}=T^{-1}\circ T_{Z}(\widetilde{\delta})^{t}$, we get that 
$\widetilde{T}^{-1}\colon Z_{b}'\to \F$ is continuous. 

Further, let $y\in Y$ such that $T_{Z}(\widetilde{\delta})(y)=0$. For every $y'\in Y'$ there is $z'\in Z'$ with 
$T_{Z}(\widetilde{\delta})^{t}(z')=y'$ by the surjectivity of $T_{Z}(\widetilde{\delta})^{t}$, which implies 
\[
y'(y)=T_{Z}(\widetilde{\delta})^{t}(z')(y)=z'(T_{Z}(\widetilde{\delta})(y))=z'(0)=0
\]
for all $y'\in Y'$. Hence $y=0$ by the Hahn--Banach theorem and $T_{Z}(\widetilde{\delta})$ is injective. 

Moreover, (the restriction of) $T_{Z}(\widetilde{\delta})$ is a bijective map from the span of 
$\{\delta(x)\;|\;x\in\Omega\}$ to the span of $\{\widetilde{\delta}(x)\;|\;x\in\Omega\}$, 
which is dense in $Z$ by \cite[Proposition 2.7, p.~1596]{kruse2023a}, because 
$T_{Z}(\widetilde{\delta})(\delta(x))=\widetilde{\delta}(x)$ for all $x\in\Omega$. Thus $T_{Z}(\widetilde{\delta})$ has dense range.

(b) This statement follows from part (a) and \cite[Open mapping theorem 24.30, p.~289]{meisevogt1997} in case (i) and 
\cite[11.1.7 Theorem (b), p.~221]{jarchow1981} in case (ii). 

(c) We denote by $j_{Y}\colon Y\to(Y_{b}')'$ and 
$j_{Z}\colon Z\to(Z_{b}')'$ the canonical linear injections and observe that the map 
$T_{Z}(\widetilde{\delta})^{tt}\colon (Y_{b}')_{b}'\to (Z_{b}')_{b}' $ is linear, continuous and 
bijective and its inverse fulfils 
\[
 (T_{Z}(\widetilde{\delta})^{tt})^{-1}
=((T_{Z}(\widetilde{\delta})^{t})^{-1})^{t}
=(\widetilde{T}\circ T^{-1})^{t}
\]
by part (a). We note that $j_{Y}\colon Y\to(Y_{b}')_{b}'$ and 
$j_{Z}\colon Z\to(Z_{b}')_{b}'$ are topological isomorphisms into 
by \cite[11.2.2 Proposition, p.~222]{jarchow1981} as $Y$ and $Z$ are (quasi-)barrelled. 
Since $\widetilde{T}\colon \F\to Z_{b}'$ is continuous by assumption, we get that 
$(T_{Z}(\widetilde{\delta})^{tt})^{-1}$ is also continuous and hence $T_{Z}(\widetilde{\delta})^{tt}$ a topological isomorphism. 

Let $z\in Z$. Since the span of $\{\widetilde{\delta}(x)\;|\;x\in\Omega\}$ is dense in $Z$ 
by \cite[Proposition 2.7, p.~1596]{kruse2023a}, there is a net $(z_{\iota})_{\iota\in I}$ 
converging to $z$ and where all $z_{\iota}$ can be represented as $z_{\iota}=\sum_{x\in\Omega}a_{x,\iota}\widetilde{\delta}(x)$ with finitely many 
non-zero $a_{x,\iota}\in\K$. Using that $T_{Z}(\widetilde{\delta})''\circ j_{Y}=j_{Z}\circ T_{Z}(\widetilde{\delta})$ 
and setting $y_{\iota}\coloneqq \sum_{x\in\Omega}a_{x,\iota}\delta(x)\in Y$ for $\iota\in I$, we get 
\[
 T_{Z}(\widetilde{\delta})^{tt}(j_{Y}(y_{\iota}))
=(j_{Z}\circ T_{Z}(\widetilde{\delta}))(y_{\iota})
=j_{Z}(\sum_{x\in\Omega}a_{x,\iota}\widetilde{\delta}(x))
=j_{Z}(z_{\iota})
\]
and so
\[
j_{Y}(y_{\iota})=(T_{Z}(\widetilde{\delta})^{tt})^{-1}(j_{Z}(z_{\iota}))
\]
for all $\iota\in I$, which implies that the net $(j_{Y}(y_{\iota}))_{\iota\in I}$ converges to $(T_{Z}(\widetilde{\delta})^{tt})^{-1}(j_{Z}(z))$ 
in $(Y_{b}')_{b}'$ since $T_{Z}(\widetilde{\delta})^{tt}$ is a topological isomorphism and $(j_{Z}(z_{\iota}))_{\iota\in I}$ converges to 
$j_{Z}(z)$ due to the barrelledness of $Z$. The quasi-barrelledness of $Y$ implies that $(y_{\iota})_{\iota\in I}$ is a Cauchy net 
in $Y$. From the completeness of $Y$ we deduce that $(y_{\iota})_{\iota\in I}$ converges to some $y\in Y$, yielding 
\[
 (j_{Z}\circ T_{Z}(\widetilde{\delta}))(y)
=(T_{Z}(\widetilde{\delta})^{tt}\circ j_{Y})(y)
=j_{Z}(z)
\] 
and thus $T_{Z}(\widetilde{\delta})(y)=z$ by the injectivity of $j_{Z}$, which means that $T_{Z}(\widetilde{\delta})$ is surjective. 

(d) The statement follows from part (a) and (c) and \cite[Open mapping theorem 24.30, p.~289]{meisevogt1997} 
in case (i) and \cite[11.1.7 Theorem (b), p.~221]{jarchow1981} in case (ii) 
combined with the observations that ultrabornological spaces are barrelled, 
and $B_{r}$-complete spaces are complete by \cite[9.5.1 Proposition (b), p.~183]{jarchow1981}. 
\end{proof}

\prettyref{prop:linearisation_scalar_valued} complements \cite[Corollary 2, p.~695]{carando2004} where for a special 
continuous linearisation $(e,\mathcal{F}_{\ast}(\Omega),L)$ of $\F$ instead of a strong linearisation it is shown that 
$\mathcal{F}_{\ast}(\Omega)$ is topologically isomorphic to $Z$ for any other continuous linearisation 
$(\widetilde{e},Z,\widetilde{T})$ of $\F$ such that $Z$ is a Fr\'echet space. 
Looking at part (b), we note that $Z_{b}'$ is a Fr\'echet space, so $B_{r}$-complete and webbed, by 
\cite[9.5.2 Krein-\u{S}mulian Theorem, p.~184]{jarchow1981} and \cite[12.4.2 Theorem, p.~258]{jarchow1981} 
if $Z$ is a complete gDF-space. Further, $Z_{b}'$ is a complete DF-space, so webbed, by \cite[12.4.5 Theorem, p.~260]{jarchow1981} 
and \cite[12.4.6 Proposition, p.~260]{jarchow1981} if $Z$ is a Fr\'echet space. 

\begin{prop}\label{prop:linearisation_isometry}
Let $(\F,\|\cdot\|)$ be a Banach space of $\K$-valued functions on a non-empty set $\Omega$ such that 
$\delta_{x}\in (\F,\|\cdot\|)'$ for all $x\in\Omega$, $(\delta,Y,T)$ a linearisation of 
$\F$ such that $(Y,\|\cdot\|_{Y})$ is a Banach space and $T\colon (\F,\|\cdot\|)\to ((Y,\|\cdot\|_{Y})',\|\cdot\|_{Y'})$ an isometry. 
If $(\widetilde{\delta},Z,\widetilde{T})$ is a linearisation of $\F$ such that $(Z,\|\cdot\|_{Z})$ is a Banach space and 
the map $\widetilde{T}\colon(\F,\|\cdot\|)\to ((Z,\|\cdot\|_{Z})',\|\cdot\|_{Z'})$ an isometry, 
then the map $T_{Z}(\widetilde{\delta})\colon (Y,\|\cdot\|_{Y})\to (Z,\|\cdot\|_{Z})$ is an isometric isomorphism.
\end{prop}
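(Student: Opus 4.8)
The plan is to recognise both triples as strong Banach linearisations and then to reduce everything to a single duality computation of the operator norm. Since $Y$ and $Z$ are Banach spaces, their strong duals $Y_{b}'$ and $Z_{b}'$ are just the norm duals $((Y,\|\cdot\|_{Y})',\|\cdot\|_{Y'})$ and $((Z,\|\cdot\|_{Z})',\|\cdot\|_{Z'})$. Because $T$ and $\widetilde{T}$ are algebraic isomorphisms by \prettyref{defn:linearisation} and isometries by hypothesis, they are in fact isometric isomorphisms onto $Y_{b}'$ and $Z_{b}'$, so $(\delta,Y,T)$ and $(\widetilde{\delta},Z,\widetilde{T})$ are strong linearisations, $Y$ is quasi-barrelled and $Z$ is complete. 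Thus \prettyref{thm:linearisation_full} applies with $E\coloneqq Z$.

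First I would check that $\widetilde{\delta}\in\mathcal{F}(\Omega,Z)_{\sigma,b}$, so that $T_{Z}(\widetilde{\delta})$ is defined. For $z'\in Z'$ the linearisation identity gives $z'\circ\widetilde{\delta}=\widetilde{T}(\widetilde{T}^{-1}(z'))\circ\widetilde{\delta}=\widetilde{T}^{-1}(z')\in\F$, so $\widetilde{\delta}\in\mathcal{F}(\Omega,Z)_{\sigma}$. Since $\Gamma_{\F}=\{\|\cdot\|\}$, $\Gamma_{Z}=\{\|\cdot\|_{Z}\}$ and $U_{\|\cdot\|_{Z}}^{\circ}=B_{\|\cdot\|_{Z'}}$, the isometry of $\widetilde{T}$ yields $\|\widetilde{\delta}\|_{\sigma,\|\cdot\|,\|\cdot\|_{Z}}=\sup_{z'\in B_{\|\cdot\|_{Z'}}}\|\widetilde{T}^{-1}(z')\|=\sup_{z'\in B_{\|\cdot\|_{Z'}}}\|z'\|_{Z'}=1<\infty$, confirming $\widetilde{\delta}\in\mathcal{F}(\Omega,Z)_{\sigma,b}$. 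Then \prettyref{thm:linearisation_full} provides the continuous linear map $T_{Z}(\widetilde{\delta})=\chi^{-1}(\widetilde{\delta})\colon Y\to Z$ together with the inverse formula $\langle T_{Z}(\widetilde{\delta})(y),z'\rangle=T(z'\circ\widetilde{\delta})(y)$.

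The core step is the norm computation. Using that $Z$ is normed (Hahn--Banach), the inverse formula, the identity $z'\circ\widetilde{\delta}=\widetilde{T}^{-1}(z')$, and the facts that the isometry $\widetilde{T}^{-1}$ maps $B_{\|\cdot\|_{Z'}}$ onto $B_{\|\cdot\|}$ and the isometry $T$ maps $B_{\|\cdot\|}$ onto $B_{\|\cdot\|_{Y'}}$, I would obtain for each $y\in Y$
\begin{align*}
\|T_{Z}(\widetilde{\delta})(y)\|_{Z}
&=\sup_{z'\in B_{\|\cdot\|_{Z'}}}|z'(T_{Z}(\widetilde{\delta})(y))|
 =\sup_{z'\in B_{\|\cdot\|_{Z'}}}|T(\widetilde{T}^{-1}(z'))(y)|\\
&=\sup_{g\in B_{\|\cdot\|}}|T(g)(y)|
 =\sup_{y'\in B_{\|\cdot\|_{Y'}}}|y'(y)|
 =\|y\|_{Y},
\end{align*}
the last equality being the isometric embedding of $Y$ into its bidual. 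Hence $T_{Z}(\widetilde{\delta})$ is isometric.

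It remains to secure surjectivity, which I would extract either from \prettyref{prop:linearisation_scalar_valued}~(d)(ii) — the Banach spaces $Y$, $Z$ are $B_{r}$-complete and barrelled and $\widetilde{T}$ is continuous — or, more directly, from the observation that an isometry out of the complete space $Y$ has closed range, combined with the density of the range of $T_{Z}(\widetilde{\delta})$ furnished by \prettyref{prop:linearisation_scalar_valued}~(a). Either way $T_{Z}(\widetilde{\delta})$ is a surjective isometry, hence an isometric isomorphism. I expect no serious obstacle: the argument is essentially an assembly of \prettyref{thm:linearisation_full} and \prettyref{prop:linearisation_scalar_valued} plus one duality computation, and the only point demanding care is turning the two unit-ball suprema back into $\|y\|_{Y}$ by exploiting that \emph{both} $T$ and $\widetilde{T}$ are isometries.
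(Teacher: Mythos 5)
Your proposal is correct and follows essentially the same route as the paper: the paper likewise obtains $T_{Z}(\widetilde{\delta})^{t}=T\circ\widetilde{T}^{-1}$ (via \prettyref{prop:linearisation_scalar_valued} (a) and (d)), observes that this transpose is an isometric isomorphism carrying $B_{\|\cdot\|_{Z'}}$ onto $B_{\|\cdot\|_{Y'}}$, and performs exactly your unit-ball supremum computation to conclude $\|T_{Z}(\widetilde{\delta})(y)\|_{Z}=\|y\|_{Y}$. The only cosmetic differences are that the paper gets $\mathcal{F}(\Omega,Z)_{\sigma,b}=\mathcal{F}(\Omega,Z)_{\sigma}$ from \prettyref{rem:BC_space} rather than by your direct estimate, and takes surjectivity from \prettyref{prop:linearisation_scalar_valued} (d) before the norm computation rather than after; your alternative closed-range-plus-density argument for surjectivity is also valid.
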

\begin{proof}
First, we note that $\mathcal{F}(\Omega,Z)_{\sigma,b}=\mathcal{F}(\Omega,Z)_{\sigma}$ by \prettyref{rem:BC_space} because 
the Banach space $(\F,\|\cdot\|)$ is a BC-space. Due to \prettyref{prop:linearisation_scalar_valued} (a) and (d) we know that 
$T_{Z}(\widetilde{\delta})$ is a topological isomorphism and $T_{Z}(\widetilde{\delta})^{t}=T\circ \widetilde{T}^{-1}$. 
Thus $T_{Z}(\widetilde{\delta})^{t}\colon ((Z,\|\cdot\|_{Z})',\|\cdot\|_{Z'})\to ((Y,\|\cdot\|_{Y})',\|\cdot\|_{Y'})$ is an isometric 
isomorphism since $T$ and $\widetilde{T}$ are isometries. 
Hence we have $T_{Z}(\widetilde{\delta})^{t}(B_{\|\cdot\|_{Z'}})=B_{\|\cdot\|_{Y'}}$ and 
\begin{align*}
  \|T_{Z}(\widetilde{\delta})(y)\|_{Z}
&=\sup_{z'\in B_{\|\cdot\|_{Z'}}}|z'(T_{Z}(\widetilde{\delta})(y))|
 =\sup_{z'\in B_{\|\cdot\|_{Z'}}}|T_{Z}(\widetilde{\delta})^{t}(z')(y)|
 =\sup_{y'\in B_{\|\cdot\|_{Y'}}}|y'(y)|\\
&=\|y\|_{Y}
\end{align*}
for all $y\in Y$. Thus $T_{Z}(\widetilde{\delta})$ is an isometry. 
\end{proof}

\prettyref{prop:linearisation_scalar_valued} (d) combined with \prettyref{prop:linearisation_isometry} improves 
\cite[Proposition 2.3 (c), p.~3029]{aron2024}, \cite[Theorem 3.1 (Linearization Theorem), p.~128]{gupta2016}, 
\cite[Theorem 2.1, p.~869]{mujica1991} and \cite[Theorem 3.5, p.~19]{quang2023} since it shows that the Banach space $Y$ of a 
strong linearisation $(\delta,Y,T)$ of a Banach space $\F$ such that $T$ is an isometry is uniquely determined up to an 
isometric isomorphism without the need of existence of isometric isomorphisms $\widetilde{T}_{E}\colon\FE_{\sigma}\to L(Z,E)$ 
such that $\widetilde{T}_{E}(f)\circ\widetilde{\delta}=f$ for all $f\in\F$ for every Banach space $E$ over $\K$. 
Further, \prettyref{prop:linearisation_scalar_valued} (d) combined with \prettyref{prop:linearisation_isometry} also implies 
the corresponding result for the (completion of the) projective tensor product of two Banach spaces given in 
\cite[Proposition 1.5 (Uniqueness of the Tensor Product), p.~6]{ryan2002} and \cite[Theorem 2.9, p.~22]{ryan2002}. 

\begin{prop}\label{prop:s_linearisation_in_equivalence_class}
Let $\F$ be a locally convex Hausdorff space of $\K$-valued functions on a non-empty set $\Omega$, 
$(\delta,Y,T)$ a strong linearisation of $\F$ and $Z$ a locally 
convex Hausdorff space. If there exists a topological isomorphism 
$\varphi\colon\F\to Z_{b}'$ such that $(Z,\varphi)\sim (Y,T)$, then there exists 
$\widetilde{\delta}\colon \Omega\to Z$ such that $(\widetilde{\delta},Z,\varphi)$ is a strong linearisation of $\F$.
\end{prop}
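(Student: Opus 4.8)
The plan is to extract a topological isomorphism $\lambda\colon Y\to Z$ from the equivalence $(Z,\varphi)\sim(Y,T)$ and to transport $\delta$ through it, setting $\widetilde{\delta}\coloneqq\lambda\circ\delta$. First I would unpack \prettyref{defn:predual_equivalent} (b): the relation $(Z,\varphi)\sim(Y,T)$ furnishes a topological isomorphism $\lambda\colon Y\to Z$ with $\lambda^{t}=T\circ\varphi^{-1}$. This $\lambda$ is precisely the map that \prettyref{prop:lin_relation_deltas} forces to satisfy $\lambda(\delta(x))=\widetilde{\delta}(x)$ whenever a second strong linearisation $(\widetilde{\delta},Z,\varphi)$ exists, so it is the natural and in fact the only admissible candidate for the connecting map.

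With $\widetilde{\delta}(x)\coloneqq\lambda(\delta(x))$ for $x\in\Omega$, the key computation is to verify the linearisation identity $\varphi(f)\circ\widetilde{\delta}=f$. Since $\varphi(f)\in Z'$ for each $f\in\F$, the dual-map relation $\lambda^{t}=T\circ\varphi^{-1}$ evaluated at $\varphi(f)$ gives $\varphi(f)\circ\lambda=\lambda^{t}(\varphi(f))=(T\circ\varphi^{-1})(\varphi(f))=T(f)$. Hence, for every $f\in\F$ and $x\in\Omega$,
\[
\varphi(f)(\widetilde{\delta}(x))=\varphi(f)(\lambda(\delta(x)))=(\varphi(f)\circ\lambda)(\delta(x))=T(f)(\delta(x))=f(x),
\]
where the last equality uses that $(\delta,Y,T)$ is a strong linearisation of $\F$.

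It then remains to assemble the triple. By hypothesis $\varphi\colon\F\to Z_{b}'$ is already a topological isomorphism, so together with the identity just established, $(\widetilde{\delta},Z,\varphi)$ satisfies all the requirements of a strong linearisation in the sense of \prettyref{defn:linearisation} (c). I do not anticipate any genuine obstacle here: the statement is essentially the converse direction of \prettyref{prop:lin_relation_deltas}, and the whole argument reduces to unwinding the definition of equivalence of preduals and a single substitution of $\varphi(f)$ into the dual-map relation. Should one wish to record it, the uniqueness of such a $\widetilde{\delta}$ follows at once from \prettyref{rem:lin_unique_delta}.
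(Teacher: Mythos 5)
Your proposal is correct and follows exactly the same route as the paper's proof: extract $\lambda$ with $\lambda^{t}=T\circ\varphi^{-1}$ from the equivalence, set $\widetilde{\delta}\coloneqq\lambda\circ\delta$, and verify $\varphi(f)\circ\widetilde{\delta}=f$ by the same one-line computation. The additional remarks on uniqueness of $\widetilde{\delta}$ and the connection to \prettyref{prop:lin_relation_deltas} are accurate but not needed.
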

\begin{proof}
Since $(Z,\varphi)\sim (Y,T)$, there exists a topological isomorphism 
$\lambda\colon Y\to Z$ such that $\lambda^{t}= T\circ\varphi^{-1}$. 
We set $\widetilde{\delta}\colon\Omega\to Z$, $\widetilde{\delta}(x)\coloneqq\lambda(\delta(x))$, and note that 
\begin{align*}
 (\varphi(f)\circ\widetilde{\delta})(x)
&=\varphi(f)(\lambda(\delta(x)))
 =\lambda^{t}(\varphi(f))(\delta(x))
 =(T\circ\varphi^{-1})(\varphi(f))(\delta(x))\\
&=T(f)(\delta(x))
 =f(x)
\end{align*}
for all $f\in\F$ and $x\in\Omega$. Hence $(\widetilde{\delta},Z,\varphi)$ is a strong linearisation of $\F$.
\end{proof}

\begin{thm}\label{thm:s_linearisation_in_equivalence_class}
Let $\F$ be a locally convex Hausdorff space of $\K$-valued functions on a non-empty set $\Omega$, 
$(\delta,Y,T)$ a strong linearisation of $\F$, $(Z,\widetilde{T})$ a predual of $\F$ such that $Z$ is complete 
and let 
\begin{enumerate}
\item[(i)] $Y$ be complete, barrelled and webbed and $Z$ ultrabornological, or 
\item[(ii)] $Y$ be $B_{r}$-complete and barrelled and $Z$ barrelled. 
\end{enumerate}
Consider the following assertions.
\begin{enumerate}
\item[(a)] There exists $\widetilde{\delta}\in\mathcal{F}(\Omega,Z)_{\sigma,b}$ 
such that $(\widetilde{\delta},Z,\widetilde{T})$ is a strong linearisation of $\F$.
\item[(b)] It holds $(Z,\widetilde{T})\sim (Y,T)$.
\end{enumerate}
Then it holds (a)$\Rightarrow$(b). If $\F$ is a BC-space such that $\delta_{x}\in\F'$ for all $x\in\Omega$, then it holds (b)$\Rightarrow$(a).
\end{thm}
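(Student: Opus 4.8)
The plan is to prove both implications by feeding the hypotheses into the preparatory results of this section, so that essentially no new computation is required; the analytic work has already been packaged into \prettyref{prop:linearisation_scalar_valued} and \prettyref{prop:s_linearisation_in_equivalence_class}.

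For (a)$\Rightarrow$(b) I would take as candidate isomorphism the map $\lambda:=T_{Z}(\widetilde{\delta})\colon Y\to Z$ studied in \prettyref{prop:linearisation_scalar_valued}, and check that all of its hypotheses are met. Under either (i) or (ii) the space $Y$ is barrelled, hence quasi-barrelled (this is exactly why ``barrelled'' is built into both cases); $Z$ is complete by assumption; $(\widetilde{\delta},Z,\widetilde{T})$ is a strong, in particular ordinary, linearisation of $\F$ with $\widetilde{\delta}\in\mathcal{F}(\Omega,Z)_{\sigma,b}$ by (a); and $\widetilde{T}\colon\F\to Z_{b}'$ is a topological isomorphism, so in particular continuous. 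Then \prettyref{prop:linearisation_scalar_valued} (a) yields the crucial relation $T_{Z}(\widetilde{\delta})^{t}=T\circ\widetilde{T}^{-1}$, while \prettyref{prop:linearisation_scalar_valued} (d) shows that $T_{Z}(\widetilde{\delta})$ is a topological isomorphism, with theorem case (i) (resp.~(ii)) feeding directly into case (i) (resp.~(ii)) of part (d). Thus $\lambda$ is a topological isomorphism satisfying $\lambda^{t}=T\circ\widetilde{T}^{-1}$, which is precisely the statement $(Z,\widetilde{T})\sim(Y,T)$ of \prettyref{defn:predual_equivalent} (b).

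For (b)$\Rightarrow$(a) under the BC-space hypothesis I would first invoke \prettyref{prop:s_linearisation_in_equivalence_class} with $\varphi:=\widetilde{T}$, which is a topological isomorphism since $(Z,\widetilde{T})$ is a predual of $\F$, and the assumed equivalence $(Z,\widetilde{T})\sim(Y,T)$; this delivers a map $\widetilde{\delta}\colon\Omega\to Z$ for which $(\widetilde{\delta},Z,\widetilde{T})$ is a strong linearisation of $\F$. It then remains to verify the membership $\widetilde{\delta}\in\mathcal{F}(\Omega,Z)_{\sigma,b}$. The weak membership $\widetilde{\delta}\in\mathcal{F}(\Omega,Z)_{\sigma}$ is automatic (cf.~\prettyref{rem:cond_i_linearisation}): for $z'\in Z'$ choose $g\in\F$ with $\widetilde{T}(g)=z'$, which is possible since $\widetilde{T}$ maps $\F$ onto $Z'$, whence $z'\circ\widetilde{\delta}=\widetilde{T}(g)\circ\widetilde{\delta}=g\in\F$. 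Because $\F$ is a BC-space with $\delta_{x}\in\F'$ for all $x\in\Omega$, \prettyref{rem:BC_space} gives $\mathcal{F}(\Omega,Z)_{\sigma}=\mathcal{F}(\Omega,Z)_{\sigma,b}$, so $\widetilde{\delta}\in\mathcal{F}(\Omega,Z)_{\sigma,b}$, which establishes (a).

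I expect the only genuinely delicate point to be this final upgrade in (b)$\Rightarrow$(a): \prettyref{prop:s_linearisation_in_equivalence_class} on its own produces only a strong linearisation and gives no reason for the resulting $\widetilde{\delta}$ to lie in the smaller space $\mathcal{F}(\Omega,Z)_{\sigma,b}$ rather than merely in $\mathcal{F}(\Omega,Z)_{\sigma}$. This is exactly where the BC-space assumption is used, and it explains why that hypothesis is needed for this direction but is dispensable for (a)$\Rightarrow$(b). Apart from this, the proof is essentially bookkeeping: the open mapping arguments underlying the isomorphism $T_{Z}(\widetilde{\delta})$ have already been carried out in \prettyref{prop:linearisation_scalar_valued}, so the remaining task is to match the two cases (i) and (ii) to the corresponding cases of that proposition.
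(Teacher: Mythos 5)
Your proposal is correct and follows essentially the same route as the paper: (a)$\Rightarrow$(b) is obtained from \prettyref{prop:linearisation_scalar_valued} (a) and (d) with $\lambda=T_{Z}(\widetilde{\delta})$, and (b)$\Rightarrow$(a) from \prettyref{prop:s_linearisation_in_equivalence_class} with $\varphi=\widetilde{T}$ together with \prettyref{rem:BC_space}. Your additional remarks on matching cases (i)/(ii) and on why the BC-space hypothesis is needed to upgrade $\widetilde{\delta}$ from $\mathcal{F}(\Omega,Z)_{\sigma}$ to $\mathcal{F}(\Omega,Z)_{\sigma,b}$ are accurate elaborations of what the paper leaves implicit.
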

\begin{proof}
(a)$\Rightarrow$(b) Since $(\widetilde{\delta},Z,\widetilde{T})$ is a strong linearisation of $\F$, the topological isomorphism 
$\widetilde{T}\colon\F\to Z_{b}'$ fulfils $\widetilde{T}(f)\circ\widetilde{\delta}=f$ for all $f\in\F$. 
We conclude statement (b) from \prettyref{prop:linearisation_scalar_valued} (a) and (d). 

(b)$\Rightarrow$(a) This implication follows from \prettyref{prop:s_linearisation_in_equivalence_class} with 
$\varphi\coloneqq\widetilde{T}$ and \prettyref{rem:BC_space}.
\end{proof}

\begin{cor}
Let $(\F,\tau)$ be a bornological BC-space of $\K$-valued functions on a non-empty set $\Omega$ satisfying 
$\operatorname{(BBC)}$ and $\operatorname{(CNC)}$ for some $\tau_{\operatorname{p}}\leq\widetilde{\tau}$ and 
$(\F_{\mathcal{B},\widetilde{\tau}}',\beta)$ webbed, where $\mathcal{B}$ is the family of $\tau$-bounded sets, 
and $(Z,\widetilde{T})$ a predual of $(\F,\tau)$ such that $Z$ is complete and ultrabornological. 
Then the following assertions are equivalent.
\begin{enumerate}
\item[(a)] There exists $\widetilde{\delta}\colon \Omega\to Z$ such that $(\widetilde{\delta},Z,\widetilde{T})$ 
is a strong linearisation of $\F$.
\item[(b)] It holds $(Z,\widetilde{T})\sim(\F_{\mathcal{B},\widetilde{\tau}}',\mathcal{I})$.
\end{enumerate}
\end{cor}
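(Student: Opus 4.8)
The plan is to read this corollary as the specialisation of \prettyref{thm:s_linearisation_in_equivalence_class} to the canonical predual $(\F_{\mathcal{B},\widetilde{\tau}}',\mathcal{I})$ supplied by \prettyref{cor:scb_linearisation}. First I would put $Y\coloneqq\F_{\mathcal{B},\widetilde{\tau}}'$, $T\coloneqq\mathcal{I}$ and $\delta\coloneqq\Delta$, and record that $(\Delta,\F_{\mathcal{B},\widetilde{\tau}}',\mathcal{I})$ is a strong complete barrelled linearisation of $\F$: this is precisely \prettyref{cor:scb_linearisation} (a), applicable because $(\F,\tau)$ is bornological and satisfies $\operatorname{(BBC)}$ and $\operatorname{(CNC)}$ for $\widetilde{\tau}$ with $\tau_{\operatorname{p}}\leq\widetilde{\tau}$. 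In particular $Y$ is barrelled, it is complete by \prettyref{prop:predual_complete} (a), and it is webbed by hypothesis.

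Next I would check the hypotheses of \prettyref{thm:s_linearisation_in_equivalence_class} in its case (i) for $(\delta,Y,T)=(\Delta,\F_{\mathcal{B},\widetilde{\tau}}',\mathcal{I})$: the space $Y$ is complete, barrelled and webbed by the previous step, while $Z$ is complete and ultrabornological by assumption, so case (i) is met. Moreover, since $Y$ is barrelled and hence quasi-barrelled, \prettyref{rem:point_eval_scb_lin} gives $\delta_{x}\in\F'$ for every $x\in\Omega$; combined with the assumption that $\F$ is a BC-space, this is exactly what \prettyref{thm:s_linearisation_in_equivalence_class} requires for the reverse implication (b)$\Rightarrow$(a). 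Consequently \prettyref{thm:s_linearisation_in_equivalence_class} yields the equivalence of its statements (a) and (b), and its (b) reads $(Z,\widetilde{T})\sim(\F_{\mathcal{B},\widetilde{\tau}}',\mathcal{I})$, which is exactly assertion (b) of the corollary.

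The only genuine subtlety — and the step I expect to be the main obstacle — is that assertion (a) of the corollary asks merely for a map $\widetilde{\delta}\colon\Omega\to Z$ making $(\widetilde{\delta},Z,\widetilde{T})$ a strong linearisation, whereas assertion (a) of \prettyref{thm:s_linearisation_in_equivalence_class} additionally demands $\widetilde{\delta}\in\mathcal{F}(\Omega,Z)_{\sigma,b}$. To bridge this I would show that any such $\widetilde{\delta}$ automatically lies in $\mathcal{F}(\Omega,Z)_{\sigma,b}$: for $z'\in Z'$ the surjectivity of $\widetilde{T}$ gives $f\in\F$ with $\widetilde{T}(f)=z'$, whence $z'\circ\widetilde{\delta}=\widetilde{T}(f)\circ\widetilde{\delta}=f\in\F$, so $\widetilde{\delta}\in\mathcal{F}(\Omega,Z)_{\sigma}$. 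Applying \prettyref{rem:BC_space} with $E\coloneqq Z$ — legitimate since $\F$ is a BC-space with $\delta_{x}\in\F'$ — identifies $\mathcal{F}(\Omega,Z)_{\sigma}=\mathcal{F}(\Omega,Z)_{\sigma,b}$, so indeed $\widetilde{\delta}\in\mathcal{F}(\Omega,Z)_{\sigma,b}$. Thus the two forms of assertion (a) coincide, and chaining the implications established above gives the desired equivalence. No new estimates are needed, as all the analytic content is already encapsulated in \prettyref{thm:s_linearisation_in_equivalence_class} and \prettyref{cor:scb_linearisation}.
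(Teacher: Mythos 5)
Your proof is correct and follows the same route as the paper: specialise \prettyref{thm:s_linearisation_in_equivalence_class} in its case (i) to the strong complete barrelled linearisation $(\Delta,\F_{\mathcal{B},\widetilde{\tau}}',\mathcal{I})$ provided by \prettyref{cor:scb_linearisation} (a), using that $\Delta(x)=\delta_{x}\in(\F,\tau)'$ and that $\F_{\mathcal{B},\widetilde{\tau}}'$ is complete, barrelled and (by hypothesis) webbed. Your additional bridging step, showing via the surjectivity of $\widetilde{T}$ and \prettyref{rem:BC_space} that any $\widetilde{\delta}$ as in assertion (a) automatically lies in $\mathcal{F}(\Omega,Z)_{\sigma,b}$, is a detail the paper leaves implicit, and you handle it correctly.
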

\begin{proof}
Due to \prettyref{cor:scb_linearisation} (a) $(\Delta,\F_{\mathcal{B},\widetilde{\tau}}',\mathcal{I})$ is a strong 
complete barrelled linearisation of $\F$. Hence our statement follows from 
\prettyref{thm:s_linearisation_in_equivalence_class} (i) since $\Delta(x)=\delta_{x}\in(\F,\tau)'$ for all $x\in\Omega$ 
by \prettyref{prop:predual_complete} (a).
\end{proof}

In particular, $(\F_{\mathcal{B},\widetilde{\tau}}',\beta)$ is a complete DF-space (see \cite[Corollary 3.23, p.~1606]{kruse2023a}) 
and thus webbed by \cite[12.4.6 Proposition, p.~260]{jarchow1981} if $(\F,\tau)$ is a Fr\'echet space. 

\begin{cor}
Let $(\F,\tau)$ be a complete bornological DF-space of $\K$-valued functions on a non-empty set $\Omega$ satisfying 
$\operatorname{(BBC)}$ and $\operatorname{(CNC)}$ for some $\tau_{\operatorname{p}}\leq\widetilde{\tau}$ and $\mathcal{B}$ 
the family of $\tau$-bounded sets as well as $(Z,\widetilde{T})$ a predual of $(\F,\tau)$ such that $Z$ is complete and barrelled. 
Then the following assertions are equivalent.
\begin{enumerate}
\item[(a)] There exists $\widetilde{\delta}\colon\Omega\to Z$ such that $(\widetilde{\delta},Z,\widetilde{T})$ 
is a strong linearisation of $\F$.
\item[(b)] It holds $(Z,\widetilde{T})\sim(\F_{\mathcal{B},\widetilde{\tau}}',\mathcal{I})$.
\end{enumerate}
\end{cor}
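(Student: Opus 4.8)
The plan is to reduce this corollary to \prettyref{thm:s_linearisation_in_equivalence_class}, exactly as in the preceding corollary, but invoking hypothesis (ii) of that theorem in place of (i). First I would fix the reference strong linearisation: by \prettyref{cor:scb_linearisation} (a) the triple $(\Delta,\F_{\mathcal{B},\widetilde{\tau}}',\mathcal{I})$ is a strong complete barrelled linearisation of $(\F,\tau)$, and by \prettyref{prop:predual_complete} (a) we have $\Delta(x)=\delta_{x}\in(\F,\tau)'$ for every $x\in\Omega$. I would then set $(\delta,Y,T)\coloneqq(\Delta,\F_{\mathcal{B},\widetilde{\tau}}',\mathcal{I})$ and feed it, together with the given predual $(Z,\widetilde{T})$, into \prettyref{thm:s_linearisation_in_equivalence_class}.

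The decisive point is to verify hypothesis (ii), namely that $Y$ is $B_{r}$-complete and barrelled while $Z$ is barrelled and complete. Since $(\F,\tau)$ is a DF-space, \prettyref{prop:predual_complete} (b) shows that $Y=(\F_{\mathcal{B},\widetilde{\tau}}',\beta)$ is a Fr\'echet space, whence $Y$ is barrelled and $B_{r}$-complete by \cite[9.5.2 Krein-\u{S}mulian Theorem, p.~184]{jarchow1981}. The space $Z$ is barrelled and complete by assumption, so (ii) holds and the implication (a)$\Rightarrow$(b) of \prettyref{thm:s_linearisation_in_equivalence_class} applies directly. (Note that this is precisely where the DF-hypothesis enters and replaces the webbedness assumption made in the previous corollary.)

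For the converse I must supply the extra hypothesis of \prettyref{thm:s_linearisation_in_equivalence_class}, namely that $\F$ is a BC-space with $\delta_{x}\in\F'$ for all $x\in\Omega$. The latter has already been recorded; for the former I would use that a complete DF-space is webbed by \cite[12.4.6 Proposition, p.~260]{jarchow1981} and that webbed spaces are BC-spaces. This yields (b)$\Rightarrow$(a) and hence the full equivalence.

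The one genuinely non-automatic step---and what I expect to be the main (albeit minor) obstacle---is reconciling the two formulations of statement (a): \prettyref{thm:s_linearisation_in_equivalence_class} asks for $\widetilde{\delta}\in\mathcal{F}(\Omega,Z)_{\sigma,b}$, whereas the corollary merely posits a map $\widetilde{\delta}\colon\Omega\to Z$ making $(\widetilde{\delta},Z,\widetilde{T})$ a strong linearisation. Here I would invoke \prettyref{rem:BC_space}: since $\F$ is a BC-space with $\delta_{x}\in\F'$, we have $\mathcal{F}(\Omega,Z)_{\sigma}=\mathcal{F}(\Omega,Z)_{\sigma,b}$, and any such $\widetilde{\delta}$ automatically lies in $\mathcal{F}(\Omega,Z)_{\sigma}$, because $z'\circ\widetilde{\delta}=\widetilde{T}(\widetilde{T}^{-1}(z'))\circ\widetilde{\delta}=\widetilde{T}^{-1}(z')\in\F$ for every $z'\in Z'$. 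Thus the two versions of (a) coincide, and the reduction to \prettyref{thm:s_linearisation_in_equivalence_class} is complete.
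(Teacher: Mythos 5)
Your proposal is correct and follows essentially the same route as the paper: both reduce to Theorem \ref{thm:s_linearisation_in_equivalence_class} (ii) by noting that $(\F_{\mathcal{B},\widetilde{\tau}}',\beta)$ is Fr\'echet (hence $B_{r}$-complete and barrelled) via Proposition \ref{prop:predual_complete} (b) and the Krein--\u{S}mulian theorem, and that the complete DF-space $(\F,\tau)$ is webbed, hence a BC-space. Your extra care in reconciling the condition $\widetilde{\delta}\in\mathcal{F}(\Omega,Z)_{\sigma,b}$ with the corollary's weaker formulation of (a) via Remark \ref{rem:BC_space} is a detail the paper leaves implicit, but it is handled correctly.
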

\begin{proof}
Due to \prettyref{prop:predual_complete} (b) and \prettyref{cor:scb_linearisation} (a) 
$(\Delta,\F_{\mathcal{B},\widetilde{\tau}}',\mathcal{I})$ is a strong 
Fr\'echet linearisation of $\F$. By \prettyref{prop:predual_complete} (a) it holds $\Delta(x)=\delta_{x}\in(\F,\tau)'$ 
for all $x\in\Omega$. Since the Fr\'echet space $(\F_{\mathcal{B},\widetilde{\tau}}',\beta)$ is $B_{r}$-complete 
by \cite[9.5.2 Krein-\u{S}mulian Theorem, p.~184]{jarchow1981} and the complete DF-space $(\F,\tau)$ webbed by 
\cite[12.4.6 Proposition, p.~260]{jarchow1981}, so a BC-space, our statement follows 
from \prettyref{thm:s_linearisation_in_equivalence_class} (ii). 
\end{proof}

Using \cite[Propositions 3.16, 3.17, p.~1602]{kruse2023a}, \prettyref{prop:predual_complete} (c) and \prettyref{cor:scb_linearisation} (a), we get the following corollary for completely normable spaces $\F$.

\begin{cor}
Let $(\F,\tau)$ be a completely normable space of $\K$-valued functions on a non-empty set $\Omega$ satisfying 
$\operatorname{(BBC)}$ for some $\tau_{\operatorname{p}}\leq\widetilde{\tau}$ and $(Z,\widetilde{T})$ a predual of $(\F,\tau)$ 
such that $Z$ is complete and barrelled. Then the following assertions are equivalent.
\begin{enumerate}
\item[(a)] There exists $\widetilde{\delta}\colon\Omega\to Z$ such that $(\widetilde{\delta},Z,\widetilde{T})$ 
is a strong linearisation of $\F$.
\item[(b)] It holds $(Z,\widetilde{T})\sim((\F,\widetilde{\gamma})',\mathcal{I})$.
\end{enumerate}
\end{cor}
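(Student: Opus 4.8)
The plan is to follow the template of the two preceding corollaries, replacing the Fr\'echet/DF-predual $\F_{\mathcal{B},\widetilde{\tau}}'$ by the completely normable predual $(\F,\widetilde{\gamma})'$ and then invoking \prettyref{thm:s_linearisation_in_equivalence_class} in case (ii). First I would produce a concrete strong Banach linearisation of $\F$ sitting in the equivalence class we want to compare against. Since $(\F,\tau)$ is normable and satisfies $\operatorname{(BBC)}$ for some $\widetilde{\tau}$ with $\tau_{\operatorname{p}}\leq\widetilde{\tau}$, I would use \cite[Propositions 3.16, 3.17, p.~1602]{kruse2023a} to identify $(\F,\widetilde{\gamma})'$ with the canonical completely normable predual associated to the pair $(\tau,\widetilde{\tau})$; combining this with \prettyref{prop:predual_complete} (c) (complete normability) and \prettyref{cor:scb_linearisation} (a) (the linearising structure), I obtain that $(\Delta,(\F,\widetilde{\gamma})',\mathcal{I})$ is a strong Banach linearisation of $\F$ with $\Delta(x)=\delta_{x}$ for every $x\in\Omega$.

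Next I would verify the hypotheses under which \prettyref{thm:s_linearisation_in_equivalence_class} (ii) applies to the pair $\bigl((\Delta,(\F,\widetilde{\gamma})',\mathcal{I}),(Z,\widetilde{T})\bigr)$. Setting $Y\coloneqq(\F,\widetilde{\gamma})'$, the space $Y$ is completely normable, hence a Banach space, hence $B_{r}$-complete by \cite[9.5.2 Krein-\u{S}mulian Theorem, p.~184]{jarchow1981} and barrelled, which is exactly the hypothesis on $Y$ in condition (ii); the space $Z$ is barrelled and complete by assumption. For the reverse implication I also need $\F$ to be a BC-space with $\delta_{x}\in\F'$: the Banach space $(\F,\tau)$ is $B_{r}$-complete by the same Krein-\u{S}mulian theorem and therefore a BC-space, while $\delta_{x}\in(\F,\tau)'$ for all $x\in\Omega$ follows from the first step together with \prettyref{rem:point_eval_scb_lin}, the predual $Y$ being barrelled. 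An application of \prettyref{thm:s_linearisation_in_equivalence_class} (ii) then yields the equivalence of (a) and (b): since $\F$ is a BC-space with $\delta_{x}\in\F'$ we have $\mathcal{F}(\Omega,Z)_{\sigma}=\mathcal{F}(\Omega,Z)_{\sigma,b}$ by \prettyref{rem:BC_space}, so the corollary's assertion (a) coincides with assertion (a) of the theorem, and the theorem's implications (a)$\Rightarrow$(b) and (b)$\Rightarrow$(a) supply both directions of the equivalence with $((\F,\widetilde{\gamma})',\mathcal{I})$.

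The step I expect to be the genuine obstacle is the very first one, namely justifying that $(\F,\widetilde{\gamma})'$ is the right space to feed into the machinery. In contrast to the two earlier corollaries, here only $\operatorname{(BBC)}$ is assumed and the auxiliary topology is taken with the opposite comparison $\tau_{\operatorname{p}}\leq\widetilde{\tau}$ rather than $\widetilde{\tau}\leq\tau_{\operatorname{p}}$, so one cannot simply hand $\widetilde{\tau}$ to \prettyref{cor:scb_linearisation}. The role of \cite[Propositions 3.16, 3.17, p.~1602]{kruse2023a} is precisely to pass to the finest locally convex Hausdorff topology $\widetilde{\gamma}$ that agrees with $\widetilde{\tau}$ on the $\tau$-bounded sets and to show that $(\F,\widetilde{\gamma})'$, equipped with $\beta$, then meets the coarse-topology requirements needed to serve as the completely normable linearising predual. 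Once this identification is secured, everything else is a routine repetition of the DF-space corollary, using condition (ii) of \prettyref{thm:s_linearisation_in_equivalence_class} with the Banach space $Y=(\F,\widetilde{\gamma})'$ in place of the Fr\'echet space $\F_{\mathcal{B},\widetilde{\tau}}'$.
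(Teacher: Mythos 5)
Your proposal is correct and follows essentially the same route as the paper, which gives no written proof beyond citing exactly the ingredients you use: \cite[Propositions 3.16, 3.17, p.~1602]{kruse2023a} to replace the missing $\operatorname{(CNC)}$ hypothesis and identify $(\F,\widetilde{\gamma})'$ as the completely normable linearising predual, \prettyref{prop:predual_complete}~(c) and \prettyref{cor:scb_linearisation}~(a) for the strong Banach linearisation $(\Delta,(\F,\widetilde{\gamma})',\mathcal{I})$, and then \prettyref{thm:s_linearisation_in_equivalence_class}~(ii) with the Banach space $Y=(\F,\widetilde{\gamma})'$ ($B_{r}$-complete and barrelled) and the webbed, hence BC-, space $(\F,\tau)$. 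Only your side remark that the comparison $\tau_{\operatorname{p}}\leq\widetilde{\tau}$ is ``opposite'' to the two preceding corollaries is inaccurate --- they use the same direction, and it is only \prettyref{cor:scb_linearisation} that assumes $\widetilde{\tau}\leq\tau_{\operatorname{p}}$ --- but this does not affect the argument.
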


Let $(\delta,Y,T_{\K})$ be a strong linearisation of $\F$ such that $Y$ is quasi-barrelled. 
If we not only have a linearisation $(\widetilde{\delta},Z,\widetilde{T}_{\K})$ of $\F$ in the scalar-valued case 
as in the results above but also of $\mathcal{F}(\Omega,Y)_{\sigma}$ in the vector-valued case, we may get rid of some of the 
assumptions in \prettyref{prop:linearisation_scalar_valued} (d) on $\widetilde{T}_{\K}$, $Y$ and $Z$. 

\begin{prop}\label{prop:linearisation_predual_unique}
Let $\F$ be a locally convex Hausdorff space of $\K$-valued functions on a non-empty set $\Omega$, 
$(\delta,Y,T_{\K})$ a strong linearisation of $\F$ such that $Y$ is quasi-barrelled and $\delta\in\mathcal{F}(\Omega,Y)_{\sigma,b}$. 
If $Z$ is a complete locally convex Hausdorff space and $\widetilde{\delta}\in\mathcal{F}(\Omega,Z)_{\sigma,b}$ 
such that for all $E\in\{\K,Y\}$ there is an algebraic isomorphism 
$\widetilde{T}_{E}\colon\FE_{\sigma,b}\to L(Z,E)$ with $\widetilde{T}_{E}(f)\circ\widetilde{\delta}=f$ for all $f\in\FE_{\sigma,b}$, 
then the map $T_{Z}(\widetilde{\delta})\colon Y\to Z$ is a topological isomorphism with inverse 
$T_{Z}(\widetilde{\delta})^{-1}=\widetilde{T}_{Y}(\delta)$, the map $\widetilde{T}_{\K}$ is a topological isomorphism 
and $T_{Z}(\widetilde{\delta})^{t}=T_{\K}\circ \widetilde{T}_{\K}^{-1}$. 
In particular, it holds $(Z,\widetilde{T}_{\K})\sim (Y,T_{\K})$.
\end{prop}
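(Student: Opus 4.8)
The plan is to exhibit $\widetilde{T}_{Y}(\delta)$ as a two-sided inverse of $T_{Z}(\widetilde{\delta})$ and then read off the transpose identity directly from the inverse formula in \prettyref{thm:linearisation_full}. First I would record the two candidate maps together with their defining relations on the generating families. Since $(\delta,Y,T_{\K})$ is a strong linearisation with $Y$ quasi-barrelled and $Z$ is complete, \prettyref{rem:linearisation_predual} (which rests on \prettyref{thm:linearisation_full}) furnishes $T_{Z}(\widetilde{\delta})\in L(Y,Z)$ with $T_{Z}(\widetilde{\delta})\circ\delta=\widetilde{\delta}$, i.e.~$T_{Z}(\widetilde{\delta})(\delta(x))=\widetilde{\delta}(x)$ for all $x\in\Omega$. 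Symmetrically, applying the hypothesis with $E=Y$ to $\delta\in\mathcal{F}(\Omega,Y)_{\sigma,b}$ yields a continuous linear map $\widetilde{T}_{Y}(\delta)\in L(Z,Y)$ with $\widetilde{T}_{Y}(\delta)\circ\widetilde{\delta}=\delta$, i.e.~$\widetilde{T}_{Y}(\delta)(\widetilde{\delta}(x))=\delta(x)$. Both maps being continuous, the compositions $\widetilde{T}_{Y}(\delta)\circ T_{Z}(\widetilde{\delta})\colon Y\to Y$ and $T_{Z}(\widetilde{\delta})\circ\widetilde{T}_{Y}(\delta)\colon Z\to Z$ are continuous, and the relations above show that they restrict to the identity on the span of $\{\delta(x)\;|\;x\in\Omega\}$ and on the span of $\{\widetilde{\delta}(x)\;|\;x\in\Omega\}$, respectively.

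Next I would upgrade this to genuine inverses via density. The span of $\{\delta(x)\;|\;x\in\Omega\}$ is dense in $Y$ by \cite[Proposition 2.7, p.~1596]{kruse2023a}, so $\widetilde{T}_{Y}(\delta)\circ T_{Z}(\widetilde{\delta})=\id_{Y}$ by continuity and the Hausdorff property of $Y$. For the other side I need the span of $\{\widetilde{\delta}(x)\;|\;x\in\Omega\}$ to be dense in $Z$, and this is the step that is not immediate, since $(\widetilde{\delta},Z,\widetilde{T}_{\K})$ is only assumed to be an algebraic linearisation. I would extract it from the algebraic isomorphism $\widetilde{T}_{\K}\colon\F\to Z'$: for $z'\in Z'$ one has $z'\circ\widetilde{\delta}=\widetilde{T}_{\K}^{-1}(z')$ in $\F$ (apply $\widetilde{T}_{\K}(g)\circ\widetilde{\delta}=g$ to $g\coloneqq\widetilde{T}_{\K}^{-1}(z')$), so if $z'$ annihilates every $\widetilde{\delta}(x)$ then $\widetilde{T}_{\K}^{-1}(z')=0$ and hence $z'=0$ by injectivity of $\widetilde{T}_{\K}$. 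By the Hahn--Banach theorem the span of $\{\widetilde{\delta}(x)\;|\;x\in\Omega\}$ is therefore dense in $Z$, whence $T_{Z}(\widetilde{\delta})\circ\widetilde{T}_{Y}(\delta)=\id_{Z}$. Thus $T_{Z}(\widetilde{\delta})$ is a topological isomorphism with $T_{Z}(\widetilde{\delta})^{-1}=\widetilde{T}_{Y}(\delta)$.

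Finally I would compute the transpose. From the inverse formula in \prettyref{thm:linearisation_full} applied to $f=\widetilde{\delta}$ and $E=Z$, together with $z'\circ\widetilde{\delta}=\widetilde{T}_{\K}^{-1}(z')$ from the previous step, we obtain for all $z'\in Z'$ and $y\in Y$
\[
T_{Z}(\widetilde{\delta})^{t}(z')(y)=\langle T_{Z}(\widetilde{\delta})(y),z'\rangle=T_{\K}(z'\circ\widetilde{\delta})(y)=T_{\K}(\widetilde{T}_{\K}^{-1}(z'))(y),
\]
so $T_{Z}(\widetilde{\delta})^{t}=T_{\K}\circ\widetilde{T}_{\K}^{-1}$. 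As $T_{Z}(\widetilde{\delta})$ is a topological isomorphism, so is its transpose $T_{Z}(\widetilde{\delta})^{t}\colon Z_{b}'\to Y_{b}'$; combined with the topological isomorphism $T_{\K}\colon\F\to Y_{b}'$ this forces $\widetilde{T}_{\K}^{-1}=T_{\K}^{-1}\circ T_{Z}(\widetilde{\delta})^{t}\colon Z_{b}'\to\F$, and hence $\widetilde{T}_{\K}\colon\F\to Z_{b}'$, to be a topological isomorphism as well. Taking $\lambda\coloneqq T_{Z}(\widetilde{\delta})$, the identity $\lambda^{t}=T_{\K}\circ\widetilde{T}_{\K}^{-1}$ is exactly the condition of \prettyref{defn:predual_equivalent} (b), so $(Z,\widetilde{T}_{\K})\sim(Y,T_{\K})$.

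The main obstacle is the density of $\{\widetilde{\delta}(x)\;|\;x\in\Omega\}$ in $Z$, which cannot simply be quoted as for $Y$ because $\widetilde{T}_{\K}$ is a priori only an algebraic isomorphism; once the relation $z'\circ\widetilde{\delta}=\widetilde{T}_{\K}^{-1}(z')$ converts the annihilation condition into injectivity of $\widetilde{T}_{\K}$, everything else is bookkeeping of the two defining relations and the transpose formula already supplied by \prettyref{thm:linearisation_full}.
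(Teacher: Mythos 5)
Your proof is correct and follows essentially the same route as the paper's: both exhibit $\widetilde{T}_{Y}(\delta)$ as a two-sided inverse of $T_{Z}(\widetilde{\delta})$ by checking the two compositions on the generating families $\{\delta(x)\}$ and $\{\widetilde{\delta}(x)\}$ and extending by density, and both obtain $T_{Z}(\widetilde{\delta})^{t}=T_{\K}\circ\widetilde{T}_{\K}^{-1}$ from the inverse formula of \prettyref{thm:linearisation_full}, whence $\widetilde{T}_{\K}$ is a topological isomorphism and the equivalence of preduals follows. The only cosmetic difference is that you re-derive the density of $\operatorname{span}\{\widetilde{\delta}(x)\;|\;x\in\Omega\}$ in $Z$ via Hahn--Banach and the injectivity of $\widetilde{T}_{\K}$, and redo the transpose computation by hand, where the paper instead quotes \prettyref{prop:linearisation_scalar_valued} (a) and its reference for exactly these facts.
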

\begin{proof}
First, we prove that $T_{Z}(\widetilde{\delta})\colon Y\to Z$ is a topological isomorphism. 
Due to \prettyref{prop:linearisation_scalar_valued} (a) we only need to show that $T_{Y}(\widetilde{\delta})$ is surjective and 
its inverse continuous. We do this by proving that 
$T_{Z}(\widetilde{\delta})^{-1}=\widetilde{T}_{Y}(\delta)$ holds.

There is an algebraic isomorphism
$\widetilde{T}\colon\mathcal{F}(\Omega,Y)_{\sigma,b}\to L(Z,Y)$ with 
$\widetilde{T}(f)\circ\widetilde{\delta}=f$ for all $f\in\mathcal{F}(\Omega,Y)_{\sigma}$ where we set 
$\widetilde{T}\coloneqq\widetilde{T}_{Y}$.
Since $\delta\in\mathcal{F}(\Omega,Y)_{\sigma,b}$ by assumption, 
we get the commuting diagram: 
\[
\xymatrix{
\Omega \ar[d]_{\widetilde{\delta}} \ar[r]^{\delta} &  Y  \\
Z \ar[ur]_{\widetilde{T}(\delta)} &
}
\]
Furthermore, there is a topological isomorphism 
$T\coloneqq T_{Z}\colon\mathcal{F}(\Omega,Z)_{\sigma,b}\to L_{b}(Y,Z)$ with $T(f)\circ \delta=f$ for all $f\in\mathcal{F}(\Omega,Z)_{\sigma,b}$ 
by \prettyref{rem:linearisation_predual}. Since $\widetilde{\delta}\in \mathcal{F}(\Omega,Z)_{\sigma,b}$ by assumption, we obtain 
the commuting diagram: 
\[
\xymatrix{
\Omega \ar[d]_{\delta} \ar[r]^{\widetilde{\delta}} &  Z  \\
Y \ar[ur]_{T(\widetilde{\delta})} &
}
\]
Combining both diagrams, we get the commuting diagram:
\[
\xymatrix{
\Omega \ar[d]_{\delta} \ar[r]^{\widetilde{\delta}} &  Z \ar[dl]_{\mathclap{\widetilde{T}(\delta)}\hspace{0.1cm}} \\
Y \ar@<-1ex>[ur]_{T(\widetilde{\delta})} &
}
\]
Now, we note that $T(\widetilde{\delta})\circ\widetilde{T}(\delta)\in L(Z)$, $\widetilde{T}(\delta)\circ T(\widetilde{\delta})\in L(Y)$ and $(T(\widetilde{\delta})\circ\widetilde{T}(\delta))(\widetilde{\delta}(x))=T(\widetilde{\delta})(\delta(x))=\widetilde{\delta}(x)$ as well as $(\widetilde{T}(\delta)\circ T(\widetilde{\delta}))(\delta(x))=\widetilde{T}(\delta)(\widetilde{\delta}(x))=\delta(x)$ for all $x\in\Omega$. Since the span of $\{\widetilde{\delta}(x)\;|\;x\in\Omega\}$ is dense in $Z$ 
and the span of $\{\delta(x)\;|\;x\in\Omega\}$ dense in $Y$ by \cite[Proposition 2.7, p.~1596]{kruse2023a}, 
we obtain that $T(\widetilde{\delta})$ and $\widetilde{T}(\delta)$ are topological isomorphisms 
with $T(\widetilde{\delta})=\widetilde{T}(\delta)^{-1}$. 

Second, we know by \prettyref{prop:linearisation_scalar_valued} (a) that $\widetilde{T}_{\K}^{-1}$ is continuous 
and $T_{Z}(\widetilde{\delta})^{t}=T_{\K}\circ \widetilde{T}_{\K}^{-1}$. Since 
\[
\widetilde{T}_{\K}=(T_{Z}(\widetilde{\delta})^{t})^{-1}\circ T_{\K}=(T_{Z}(\widetilde{\delta})^{-1})^{t}\circ T_{\K}
\]
and $T_{\K}$ as well as $T_{Z}(\widetilde{\delta})$ are topological isomorphisms, we get that $\widetilde{T}_{\K}$ is a 
topological isomorphism, too. This implies that $(Z,\widetilde{T}_{\K})\sim (Y,T_{\K})$. 
\end{proof}

The proof of $T_{Z}(\widetilde{\delta})^{-1}=\widetilde{T}_{Y}(\delta)$ in \prettyref{prop:linearisation_predual_unique} 
is an adaptation of parts of the proof of \cite[Corollary 1, p.~691]{carando2004}.
Further, \prettyref{prop:linearisation_predual_unique} complements \cite[Corollary 1, p.~691]{carando2004} 
where a continuous linearisation $(e,\mathcal{F}_{\ast}(\Omega),L)$ of $\F$ instead of a strong linearisation is considered. 
In comparison to \cite[Corollary 1, p.~691]{carando2004} we do not need that 
$\widetilde{T}_{E}$ is an algebraic isomorphism for all complete locally convex Hausdorff spaces $E$ over $\K$ 
(even though an inspection of its proof tells us that this is not needed there either) and we get more importantly 
that $(Z,\widetilde{T}_{\K})\sim (Y,T_{\K})$. 
The latter equivalence also allows us to show how the continuous linearisation $(e,\mathcal{F}_{\ast}(\Omega),L)$ is related to our 
strong linearisations from \prettyref{sect:linearisation} in many cases.

\begin{cor}\label{cor:carando_zalduendo_predual}
Let $(\F,\tau)$ be a bornological BC-space of $\K$-valued continuous functions on 
a non-empty Hausdorff $gk_{\R}$-space $\Omega$ satisfying $\operatorname{(BBC)}$ and 
$\operatorname{(CNC)}$ for some $\tau_{\operatorname{p}}\leq\widetilde{\tau}$. 
Then the maps $\mathcal{I}_{Z}(e)\colon \F_{\mathcal{B},\widetilde{\tau}}'\to \mathcal{F}_{\ast}(\Omega)$ 
and $L$ are topological isomorphism such that $\mathcal{I}_{Z}(e)^{t}=\mathcal{I}\circ L^{-1}$, 
the triple $(e,\mathcal{F}_{\ast}(\Omega),L)$ is a continuous strong complete barrelled linearisation of $\F$ 
and $(\mathcal{F}_{\ast}(\Omega),L)\sim(\F_{\mathcal{B},\widetilde{\tau}}',\mathcal{I})$.
\end{cor}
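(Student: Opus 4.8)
The plan is to apply \prettyref{prop:linearisation_predual_unique} to the strong complete barrelled linearisation of $\F$ supplied by \prettyref{cor:scb_linearisation} and to the continuous Carando--Zalduendo linearisation $(e,\mathcal{F}_{\ast}(\Omega),L)$ of \cite[Theorem 3, p.~690]{carando2004} (see also \cite[p.~182--184]{jaramillo2009}). Concretely, I would set $Y\coloneqq\F_{\mathcal{B},\widetilde{\tau}}'$, $\delta\coloneqq\Delta$, $T_{\K}\coloneqq\mathcal{I}$, $Z\coloneqq\mathcal{F}_{\ast}(\Omega)$, $\widetilde{\delta}\coloneqq e$ and $\widetilde{T}_{\K}\coloneqq L$, so that $\mathcal{I}_{Z}(e)$ is precisely the map $T_{Z}(\widetilde{\delta})$ of \prettyref{prop:linearisation_predual_unique}. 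Here $(\Delta,\F_{\mathcal{B},\widetilde{\tau}}',\mathcal{I})$ is a strong complete barrelled linearisation with $\Delta$ continuous by \prettyref{cor:scb_linearisation} (a), (b), using that $\Omega$ is a $gk_{\R}$-space and $\F$ a space of continuous functions; $Y$ is complete and barrelled (hence quasi-barrelled); and $Z=\mathcal{F}_{\ast}(\Omega)$ is complete by its construction as a completion.

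Before applying the proposition I would check its remaining hypotheses. Since $\Delta(x)=\delta_{x}\in\F_{\mathcal{B},\widetilde{\tau}}'\subseteq(\F,\tau)'$ by \prettyref{cor:scb_linearisation} (a) and \prettyref{prop:predual_complete} (a), we have $\delta_{x}\in\F'$ for all $x\in\Omega$, whence \prettyref{rem:BC_space} gives $\mathcal{F}(\Omega,E)_{\sigma}=\mathcal{F}(\Omega,E)_{\sigma,b}$ for every locally convex Hausdorff space $E$. The membership $\Delta\in\mathcal{F}(\Omega,Y)_{\sigma,b}$ then follows because $y'\circ\Delta=\mathcal{I}^{-1}(y')\in\F$ for all $y'\in Y'$ (as $(\Delta,Y,\mathcal{I})$ is a strong linearisation), and $e\in\mathcal{F}(\Omega,Z)_{\sigma,b}$ follows in the same way from $z'\circ e=L^{-1}(z')\in\F$ for all $z'\in Z'$.

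The step I expect to require the most care is producing, for each $E\in\{\K,Y\}$, the algebraic isomorphism $\widetilde{T}_{E}\colon\FE_{\sigma,b}\to L(Z,E)$ with $\widetilde{T}_{E}(f)\circ e=f$ demanded by \prettyref{prop:linearisation_predual_unique}; note that both $\K$ and $Y$ are complete. The Carando--Zalduendo theorem furnishes the algebraic isomorphism $u\mapsto u\circ e$ from $L(\mathcal{F}_{\ast}(\Omega),E)$ onto $\FE_{\sigma}\cap\mathcal{C}(\Omega,E)$, so I would reconcile the two targets by showing $\FE_{\sigma,b}=\FE_{\sigma}\cap\mathcal{C}(\Omega,E)$. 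On the one hand $\FE_{\sigma,b}=\FE_{\sigma}$ by \prettyref{rem:BC_space}; on the other hand, by \prettyref{thm:linearisation_full} the map $\chi\colon L_{b}(Y,E)\to\FE_{\sigma,b}$, $\chi(u)=u\circ\Delta$, is surjective, so every $f\in\FE_{\sigma,b}$ has the form $u\circ\Delta$ with $u\in L(Y,E)$ and is therefore continuous, the map $\Delta$ being continuous. Hence $\FE_{\sigma,b}\subseteq\mathcal{C}(\Omega,E)$, the two descriptions coincide, and taking the inverse of the Carando--Zalduendo map yields the desired $\widetilde{T}_{E}$.

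With all hypotheses verified, \prettyref{prop:linearisation_predual_unique} gives that $\mathcal{I}_{Z}(e)\colon\F_{\mathcal{B},\widetilde{\tau}}'\to\mathcal{F}_{\ast}(\Omega)$ and $L\colon\F\to\mathcal{F}_{\ast}(\Omega)_{b}'$ are topological isomorphisms with $\mathcal{I}_{Z}(e)^{t}=\mathcal{I}\circ L^{-1}$ and $(\mathcal{F}_{\ast}(\Omega),L)\sim(\F_{\mathcal{B},\widetilde{\tau}}',\mathcal{I})$. It then remains only to read off that $(e,\mathcal{F}_{\ast}(\Omega),L)$ is a continuous strong complete barrelled linearisation: it is continuous since $e$ is, strong since $L$ is a topological isomorphism onto $\mathcal{F}_{\ast}(\Omega)_{b}'$, and its space $\mathcal{F}_{\ast}(\Omega)$ is complete and barrelled because $\mathcal{I}_{Z}(e)$ identifies it topologically with the complete barrelled space $\F_{\mathcal{B},\widetilde{\tau}}'$.
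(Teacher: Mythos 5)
Your proposal is correct and follows essentially the same route as the paper: both apply \prettyref{prop:linearisation_predual_unique} with $(\delta,Y,T_{\K})=(\Delta,\F_{\mathcal{B},\widetilde{\tau}}',\mathcal{I})$, $\widetilde{\delta}=e$, $\widetilde{T}_{E}=L_{E}$, and both reconcile the target spaces via $\FE_{\sigma,b}=\FE_{\sigma}=\FE_{\sigma}\cap\mathcal{C}(\Omega,E)$ using \prettyref{rem:BC_space} together with the surjectivity of $\chi$ from \prettyref{thm:linearisation_full} and the continuity of $\Delta$. Your write-up merely spells out in more detail the verification of the hypotheses that the paper compresses into citations of Theorems 2 and 3 of Carando--Zalduendo.
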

\begin{proof}
The triple $(\delta,Y,T_{\K})\coloneqq(\Delta,\F_{\mathcal{B},\widetilde{\tau}}',\mathcal{I})$ is a continuous strong complete 
barrelled linearisation of $\F$ by \prettyref{cor:scb_linearisation}, 
and $\Delta(x)=\delta_{x}\in(\F,\tau)'$ for all $x\in\Omega$ by \prettyref{prop:predual_complete} (a). 
Furthermore, since $(\F,\tau)$ is a BC-space and $\Delta$ continuous, we have  
\[
\FE_{\sigma,b}=\FE_{\sigma}=(\FE_{\sigma}\cap\mathcal{C}(\Omega,E))\eqqcolon\omega\FE 
\]
for any complete locally convex Hausdorff space $E$ by \prettyref{rem:BC_space} 
and \prettyref{thm:linearisation_full}.
Due to \cite[Theorems 2, 3, p.~689--690]{carando2004} $Z\coloneqq\mathcal{F}_{\ast}(\Omega)$ is complete, 
$e\in\omega\mathcal{F}(\Omega,\mathcal{F}_{\ast}(\Omega))$ and for any locally convex Hausdorff space 
$E$ there is an algebraic isomorphism $L_{E}\colon\omega\FE \to L(\mathcal{F}_{\ast}(\Omega),E)$ 
with $L_{E}(f)\circ e=f$ for all $f\in \omega\FE$. Hence it follows from \prettyref{prop:linearisation_predual_unique} with $\widetilde{\delta}\coloneqq e$ and 
$\widetilde{T}_{E}\coloneqq L_{E}$ that the maps 
$\mathcal{I}_{Z}(e)$ and $L$ are topological isomorphism with $\mathcal{I}_{Z}(e)^{t}=\mathcal{I}\circ L^{-1}$, 
the triple $(e,\mathcal{F}_{\ast}(\Omega),L)$ is a continuous strong complete barrelled linearisation of $\F$ and 
$(\mathcal{F}_{\ast}(\Omega),L)\sim(\F_{\mathcal{B},\widetilde{\tau}}',\mathcal{I})$.
\end{proof}

In the case that $(\F,\tau)$ is a Banach space of continuous $\K$-valued functions 
on a topological Hausdorff space $\Omega$ satisfying $\operatorname{(BBC)}$ for some 
$\tau_{\operatorname{p}}\leq\widetilde{\tau}$ it is already known that 
$(e,\mathcal{F}_{\ast}(\Omega),L)$ is a continuous strong Banach linearisation of $\F$ by 
\cite[Theorem 2.2, p.~188]{jaramillo2009}.

In our last results of this section we show how to get rid of the condition that the family 
$(\delta(x))_{x\in\Omega}$ should be linearly independent for the implication (b)$\Rightarrow$(a) 
of \prettyref{prop:strongly_unique_C_predual} and \prettyref{cor:unique_C_predual}, at least if it is a 
family of point evaluation functionals.

\begin{cor}\label{cor:strongly_unique_C_predual_without_lin_ind}
Let $(\F,\tau)$ be a bornological BC-space of $\K$-valued functions on a non-empty set $\Omega$ satisfying 
$\operatorname{(BBC)}$ and $\operatorname{(CNC)}$ for some $\tau_{\operatorname{p}}\leq\widetilde{\tau}$. 
Let $\mathcal{C}$ be a subclass of the class of complete barrelled locally convex Hausdorff spaces 
such that $\mathcal{C}$ is closed under topological isomorphisms and 
$\F_{\mathcal{B},\widetilde{\tau}}'\in\mathcal{C}$ where $\mathcal{B}$ is the family of 
$\tau$-bounded sets.  
Then the following assertions are equivalent. 
\begin{enumerate}
\item[(a)] $(\F,\tau)$ has a strongly unique $\mathcal{C}$ predual. 
\item[(b)] For every predual $(Z,\varphi)$ of $(\F,\tau)$ such that $Z\in\mathcal{C}$ and every $x\in\Omega$ there is a (unique) $z_{x}\in Z$ with $\delta_{x}=\varphi(\cdot)(z_{x})$.
\end{enumerate}
\end{cor}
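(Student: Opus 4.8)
The plan is to reduce the equivalence to \prettyref{prop:strongly_unique_C_predual} applied to the canonical linearisation of $\F$. By \prettyref{cor:scb_linearisation} (a) the triple $(\delta,Y,T)\coloneqq(\Delta,\F_{\mathcal{B},\widetilde{\tau}}',\mathcal{I})$ is a strong complete barrelled linearisation of $\F$ with $\delta(x)=\Delta(x)=\delta_{x}$, and $Y=\F_{\mathcal{B},\widetilde{\tau}}'\in\mathcal{C}$ by hypothesis, so all assumptions of \prettyref{prop:strongly_unique_C_predual} are met. As $Y$ is barrelled, hence quasi-barrelled, \prettyref{rem:point_eval_scb_lin} gives $T(\cdot)(\delta(x))=\delta_{x}$ for every $x\in\Omega$. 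Hence assertion (b) of the corollary is verbatim assertion (b) of \prettyref{prop:strongly_unique_C_predual} for this linearisation, and the implication (a)$\Rightarrow$(b) is immediate from that proposition.

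The work lies in (b)$\Rightarrow$(a). The obstacle is that the family $(\delta(x))_{x\in\Omega}=(\delta_{x})_{x\in\Omega}$ of point evaluations need not be linearly independent, so I cannot invoke the implication (b)$\Rightarrow$(a) of \prettyref{prop:strongly_unique_C_predual} as it stands. Inspecting its proof, though, linear independence is used at a single spot: to see that the linear map $\lambda_{0}\colon Y_{0}\to Z_{0}$ on $Y_{0}\coloneqq\operatorname{span}\{\delta_{x}\mid x\in\Omega\}$ determined by $\lambda_{0}(\delta_{x})\coloneqq z_{x}$ is well-defined. Every later step --- injectivity of $\lambda_{0}$ through $\Phi_{T}$, extension to a topological isomorphism $\lambda\colon Y\to Z$ using density of $Y_{0}$, the identity $\lambda^{t}=T\circ\varphi^{-1}$, and the conclusion via \prettyref{prop:strongly_unique_equivalent} --- is untouched by linear independence. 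So it suffices to prove this well-definedness directly and then let the remainder of that proof run unchanged.

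For well-definedness I take a finitely supported $(a_{x})_{x\in\Omega}$ in $\K$ with $\sum_{x}a_{x}\delta_{x}=0$ in $Y$ and claim $\sum_{x}a_{x}z_{x}=0$ in $Z$. Fix $z'\in Z'$; by surjectivity of $\varphi\colon\F\to Z_{b}'$ there is $f\in\F$ with $\varphi(f)=z'$, whence hypothesis (b) yields $z'(z_{x})=\varphi(f)(z_{x})=\delta_{x}(f)=f(x)$ for each $x$. Therefore
\[
z'\Bigl(\sum_{x}a_{x}z_{x}\Bigr)=\sum_{x}a_{x}f(x)=\Bigl(\sum_{x}a_{x}\delta_{x}\Bigr)(f)=0,
\]
and since $Z$ is Hausdorff the Hahn--Banach theorem forces $\sum_{x}a_{x}z_{x}=0$, as required.

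The crux --- and the precise reason the linear independence hypothesis of \prettyref{prop:strongly_unique_C_predual} is dispensable here --- is this last computation: because both $\delta_{x}$ and $z_{x}$ encode the common value $f(x)$ through the two preduals $(Y,T)$ and $(Z,\varphi)$, every linear relation among the point evaluations $(\delta_{x})$ is automatically transmitted to the candidate images $(z_{x})$. I anticipate no further difficulty; in particular the BC-space assumption does not seem to be used in this argument and appears only to harmonise the hypotheses with the neighbouring corollaries.
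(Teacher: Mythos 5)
Your proposal is correct, and for the substantive direction (b)$\Rightarrow$(a) it takes a genuinely different route from the paper. The paper does not revisit the proof of \prettyref{prop:strongly_unique_C_predual} at all: for a given predual $(Z,\varphi)$ with $Z\in\mathcal{C}$ it builds a second canonical linearisation $(\Delta,\F_{\mathcal{B},\sigma_{\varphi}(\F,Z)}',\mathcal{I}_{\varphi})$ adapted to $\varphi$ (using \cite[Proposition 3.21]{kruse2023a}, which needs exactly your hypothesis $\delta_{x}=\varphi(\cdot)(z_{x})$ to place $\delta_{x}$ in $\F_{\mathcal{B},\sigma_{\varphi}(\F,Z)}'$), shows via \prettyref{thm:linearisation_full} and \prettyref{prop:linearisation_predual_unique} that this predual is equivalent to $(\F_{\mathcal{B},\widetilde{\tau}}',\mathcal{I})$, and separately that it is equivalent to $(Z,\varphi)$; transitivity of $\sim$ then gives the conclusion. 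That route genuinely uses the BC-space hypothesis (through \prettyref{rem:BC_space}, to identify $\FE_{\sigma}$ with $\FE_{\sigma,b}$ when invoking the vector-valued linearisation theorem). Your route instead isolates the single use of linear independence in the proof of \prettyref{prop:strongly_unique_C_predual} --- well-definedness of $\lambda_{0}$ --- and repairs it directly: any relation $\sum_{x}a_{x}\delta_{x}=0$ in $Y$ is transmitted to $\sum_{x}a_{x}z_{x}=0$ in $Z$ because every $z'\in Z'$ is of the form $\varphi(f)$ and then $z'(z_{x})=f(x)$, so Hahn--Banach finishes it; the remainder of that proof (injectivity via $\Phi_{T}$, extension by density, $\lambda^{t}=T\circ\varphi^{-1}$) indeed never touches linear independence. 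Your argument is more elementary, avoids the vector-valued machinery entirely, and --- as you correctly observe --- shows that the BC-space assumption is dispensable for this particular equivalence; the paper's argument, by contrast, slots the corollary into its general framework relating the spaces $\F_{\mathcal{B},\widetilde{\tau}}'$ for different admissible $\widetilde{\tau}$, at the cost of the extra hypothesis.
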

\begin{proof}
(a)$\Rightarrow$(b) This part follows from \prettyref{prop:strongly_unique_C_predual} since 
$(\Delta,\F_{\mathcal{B},\widetilde{\tau}}',\mathcal{I})$ is a strong complete barrelled linearisation 
by \prettyref{cor:scb_linearisation} (a), $\F_{\mathcal{B},\widetilde{\tau}}'\in\mathcal{C}$ by assumption and 
$\mathcal{I}(f)(\Delta(x))=\mathcal{I}(f)(\delta_{x})=\delta_{x}(f)$ for all $f\in\F$ and $x\in\Omega$. 

(b)$\Rightarrow$(a) Let $(Z,\varphi)$ be a predual of $(\F,\tau)$ such that $Z\in\mathcal{C}$.     
Due to \cite[Proposition 3.21 (a), p.~1605]{kruse2023a} $(\F,\tau)$ satisfies $\operatorname{(BBC)}$ and $\operatorname{(CNC)}$ for 
$\sigma_{\varphi}(\F,Z)$ because $Z$ is complete and barrelled. By assumption for every $x\in\Omega$ there is $z_{x}\in Z$ with $\delta_{x}=\varphi(\cdot)(z_{x})$ and 
thus $\delta_{x}\in \F_{\mathcal{B},\sigma_{\varphi}(\F,Z)}'$ by \cite[Proposition 3.21 (b), (c), p.~1605]{kruse2023a}. 
Hence $(\Delta,\F_{\mathcal{B},\sigma_{\varphi}(\F,Z)}',\mathcal{I}_{\varphi})$ is a strong complete barrelled linearisation of $\F$ 
by \prettyref{cor:scb_linearisation} (a). Further, $\F_{\mathcal{B},\sigma_{\varphi}(\F,Z)}'\in\mathcal{C}$ since $Z$ and 
$\F_{\mathcal{B},\sigma_{\varphi}(\F,Z)}'$ are topologically isomorphic by \cite[Proposition 3.21 (b), p.~1605]{kruse2023a} and 
$\mathcal{C}$ closed under topological isomorphisms. By \prettyref{prop:predual_complete} (a) 
we know that $\Delta(x)=\delta_{x}\in (\F,\tau)'$ for all $x\in\Omega$.
Applying \prettyref{thm:linearisation_full} combined with \prettyref{rem:BC_space} to the triple 
$(\Delta,\F_{\mathcal{B},\sigma_{\varphi}(\F,Z)}',\mathcal{I}_{\varphi})$, where 
\[
\mathcal{I}_{\varphi}\colon (\F,\tau)\to (\F_{\mathcal{B},\sigma_{\varphi}(\F,Z)}',\beta)_{b}',\;
f\longmapsto [f'\mapsto f'(f)],
\]
we get that for all complete locally convex Hausdorff spaces $E$ 
there is a topological isomorphism 
$\widetilde{T}_{E}\colon \FE_{\sigma} \to L_{b}(\F_{\mathcal{B},\sigma_{\varphi}(\F,Z)}',E)$ with 
$\widetilde{T}_{E}(f)\circ\Delta=f$ for all $f\in\FE_{\sigma}$, where $\widetilde{T}_{\K}=\mathcal{I}_{\varphi}$ 
by \cite[Proposition 3.21 (b), p.~1605]{kruse2023a}. 
Thus it follows from \prettyref{prop:linearisation_predual_unique} with $\widetilde{\delta}\coloneqq\Delta$ and 
$Y\coloneqq \F_{\mathcal{B},\widetilde{\tau}}'$ that 
$(\F_{\mathcal{B},\sigma_{\varphi}(\F,Z)}',\mathcal{I}_{\varphi})\sim(\F_{\mathcal{B},\tau}',\mathcal{I})$. 
We also know that $(\F_{\mathcal{B},\sigma_{\varphi}(\F,Z)}',\mathcal{I}_{\varphi})\sim (Z,\varphi)$ 
by \cite[Proposition 3.21 (b), p.~1605]{kruse2023a}, which implies 
$(Z,\varphi)\sim(\F_{\mathcal{B},\tau}',\mathcal{I})$. Therefore $(\F,\tau)$ has a strongly unique $\mathcal{C}$ predual 
by \prettyref{prop:strongly_unique_equivalent}.
\end{proof}

\begin{cor}\label{cor:unique_C_predual_without_lin_ind}
Let $(\F,\tau)$ be a bornological BC-space of $\K$-valued functions on a non-empty set $\Omega$ satisfying 
$\operatorname{(BBC)}$ and $\operatorname{(CNC)}$ for some $\tau_{\operatorname{p}}\leq\widetilde{\tau}$. 
Let $\mathcal{C}$ be a subclass of the class of complete barrelled locally convex Hausdorff spaces 
such that $\mathcal{C}$ is closed under topological isomorphisms and 
$\F_{\mathcal{B},\widetilde{\tau}}'\in\mathcal{C}$ where $\mathcal{B}$ is the family of 
$\tau$-bounded sets.  
Then the following assertions are equivalent. 
\begin{enumerate}
\item[(a)] $(\F,\tau)$ has a unique $\mathcal{C}$ predual. 
\item[(b)] For every predual $(Z,\varphi)$ of $(\F,\tau)$ such that $Z\in\mathcal{C}$ there is 
a topological isomorphism $\psi\colon(\F,\tau)\to Z_{b}'$ such that for every $x\in\Omega$ there is a (unique) $z_{x}\in Z$ with $\delta_{x}=\psi(\cdot)(z_{x})$.
\end{enumerate}
\end{cor}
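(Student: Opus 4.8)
The plan is to run the two implications in close parallel to \prettyref{cor:strongly_unique_C_predual_without_lin_ind}, replacing the rôle of \prettyref{prop:strongly_unique_C_predual} by \prettyref{cor:unique_C_predual} throughout and exploiting that in part (b) we are now free to choose the representing isomorphism $\psi$ rather than being handed $\varphi$. For (a)$\Rightarrow$(b) I would first invoke \prettyref{cor:scb_linearisation} (a) to note that $(\Delta,\F_{\mathcal{B},\widetilde{\tau}}',\mathcal{I})$ is a strong complete barrelled linearisation of $\F$ with $\F_{\mathcal{B},\widetilde{\tau}}'\in\mathcal{C}$ by hypothesis, and record that $\mathcal{I}(\cdot)(\Delta(x))=\delta_x$ for every $x\in\Omega$. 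Then statement (b) is precisely the content of the implication (a)$\Rightarrow$(b) of \prettyref{cor:unique_C_predual} applied to this linearisation, since there $T(\cdot)(\delta(x))=\mathcal{I}(\cdot)(\Delta(x))=\delta_x$.

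For (b)$\Rightarrow$(a) I would fix a predual $(Z,\varphi)$ of $(\F,\tau)$ with $Z\in\mathcal{C}$ and use (b) to produce a topological isomorphism $\psi\colon(\F,\tau)\to Z_b'$ together with points $z_x\in Z$ satisfying $\delta_x=\psi(\cdot)(z_x)$; the tuple $(Z,\psi)$ is then again a predual of $\F$ in $\mathcal{C}$ for which the pointwise-evaluation condition holds with respect to \emph{its own} isomorphism. From here I would replay the argument of the proof of \prettyref{cor:strongly_unique_C_predual_without_lin_ind} (b)$\Rightarrow$(a) verbatim, with $\varphi$ replaced by $\psi$: by \cite[Proposition 3.21, p.~1605]{kruse2023a} the space $(\F,\tau)$ satisfies $\operatorname{(BBC)}$ and $\operatorname{(CNC)}$ for $\sigma_\psi(\F,Z)$, one gets $\delta_x\in\F_{\mathcal{B},\sigma_\psi(\F,Z)}'$, the triple $(\Delta,\F_{\mathcal{B},\sigma_\psi(\F,Z)}',\mathcal{I}_\psi)$ is a strong complete barrelled linearisation with $\F_{\mathcal{B},\sigma_\psi(\F,Z)}'\in\mathcal{C}$, and \prettyref{thm:linearisation_full} together with \prettyref{rem:BC_space} and \prettyref{prop:linearisation_predual_unique} (taken with $Y\coloneqq\F_{\mathcal{B},\widetilde{\tau}}'$ and $\widetilde{\delta}\coloneqq\Delta$) yields $(\F_{\mathcal{B},\sigma_\psi(\F,Z)}',\mathcal{I}_\psi)\sim(\F_{\mathcal{B},\widetilde{\tau}}',\mathcal{I})$. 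Since in addition $(\F_{\mathcal{B},\sigma_\psi(\F,Z)}',\mathcal{I}_\psi)\sim(Z,\psi)$ by \cite[Proposition 3.21, p.~1605]{kruse2023a}, I conclude that $Z$ is topologically isomorphic to $\F_{\mathcal{B},\widetilde{\tau}}'$.

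To finish, I would observe that this conclusion holds for the underlying space of every predual of $(\F,\tau)$ in $\mathcal{C}$; hence any two such spaces are topologically isomorphic to $\F_{\mathcal{B},\widetilde{\tau}}'$ and therefore to each other, which is exactly what \prettyref{defn:unique_predual} (a) demands. The main obstacle -- and the place where the linear-independence hypothesis of \prettyref{cor:unique_C_predual} is finally circumvented -- is the application of \prettyref{prop:linearisation_predual_unique}: it is this step that manufactures the topological isomorphism between $Z$ and $\F_{\mathcal{B},\widetilde{\tau}}'$ from the vector-valued linearisations $\widetilde{T}_E$ for $E\in\{\K,\F_{\mathcal{B},\widetilde{\tau}}'\}$ supplied by \prettyref{thm:linearisation_full} and \prettyref{rem:BC_space}, rather than from a direct comparison of the (possibly linearly dependent) evaluation families. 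Verifying that the hypotheses of \prettyref{prop:linearisation_predual_unique} are met -- in particular the identity $\widetilde{T}_\K=\mathcal{I}_\psi$ coming from \cite[Proposition 3.21, p.~1605]{kruse2023a} -- will be the only genuinely delicate bookkeeping; everything else is a transcription of the strongly unique case with the extra isomorphism $\psi$ inserted.
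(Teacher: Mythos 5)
Your proposal is correct and follows essentially the same route as the paper: the paper's proof is literally ``apply \prettyref{cor:unique_C_predual} and rerun the proof of \prettyref{cor:strongly_unique_C_predual_without_lin_ind} with $\varphi$ replaced by $\psi$'', which is exactly what you do. The only (immaterial) cosmetic difference is that at the very end you conclude directly from \prettyref{defn:unique_predual}~(a) by transitivity of topological isomorphism, whereas the paper routes the final step through \prettyref{prop:unique_equivalent}.
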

\begin{proof}
This statement follows from \prettyref{cor:unique_C_predual} and the 
proof of \prettyref{cor:strongly_unique_C_predual_without_lin_ind} with $\varphi$ replaced by $\psi$ and 
\prettyref{prop:unique_equivalent} instead of \prettyref{prop:strongly_unique_equivalent} in the end. 
\end{proof}

%
%

\bibliography{biblio_linearisation_uniqueness}
\bibliographystyle{plainnat}
\end{document}